\makeatletter \@addtoreset{equation}{section} \makeatother
\numberwithin{equation}{section}
\newtheorem{theorem}{Theorem}[section]
\newtheorem{lemma}[theorem]{Lemma}
\newtheorem{proposition}[theorem]{Proposition}
\newtheorem{corollary}[theorem]{Corollary}
\newtheorem{remark}[theorem]{Remark}
\numberwithin{equation}{section}
\begin{document}

\title[On some sharp inequalities]
{On the Stein-Weiss inequalities and higher-order Caffarelli-Kohn-Nirenberg type inequalities: sharp constants, symmetry of extremal functions}

\author[S. Deng]{Shengbing Deng$^{\ast}$}
\address{\noindent Shengbing Deng
\newline
School of Mathematics and Statistics, Southwest University,
Chongqing 400715, People's Republic of China}\email{shbdeng@swu.edu.cn}

\author[X. Tian]{Xingliang Tian}
\address{\noindent Xingliang Tian  \newline
School of Mathematics and Statistics, Southwest University,
Chongqing 400715, People's Republic of China.}\email{xltian@email.swu.edu.cn}

\thanks{$^{\ast}$ Corresponding author}

\thanks{2020 {\em{Mathematics Subject Classification.}} 35J30, 49K30, 26D10}

\thanks{{\em{Key words and phrases.}} Stein-Weiss inequality; Second-order Caffarelli-Kohn-Nirenberg inequality; Rellich-Sobolev inequality; Weighted fourth-order equation; Symmetry of extremal functions}

\allowdisplaybreaks

\begin{abstract}
    In this paper, we first classify all radially symmetry solutions of the following weighted fourth-order equation
    \begin{equation*}
    \Delta(|x|^{-\gamma}\Delta u)=|x|^\gamma u^{\frac{N+4+3\gamma}{N-4-\gamma}},\quad u\geq 0 \quad \mbox{in}\quad \mathbb{R}^N,
    \end{equation*}
    where $N\geq 5$, $-2<\gamma<0$. Then we derive the sharp Stein-Weiss inequality and standard second-order Caffarelli-Kohn-Nirenberg inequality with radially symmetry extremal functions. Moreover, by using standard spherical decomposition, we derive a sharp weighted Rellich-Sobolev inequality. Furthermore, we establish the sharp second-order Caffarelli-Kohn-Nirenberg type inequalities with two variables which have radially symmetry extremal functions. Finally, we derive the weak form Hardy-Rellich inequalities with sharp constants.
\end{abstract}

\vspace{3mm}

\maketitle

\section{{\bfseries Introduction and main results}}\label{sectir}

\subsection{Stein-Weiss inequality}\label{subsectswr}

Let us recall the well-known Hardy-Littlewood-Sobolev inequality \cite{HL28,So63}, see also the work of Lieb \cite{Lieb83} in details in which the author classified extremal functions by using stereographic projection.

    \begin{proposition}\label{prohlsi}
    ({\bfseries Hardy-Littlewood-Sobolev inequality}). Let $r,t>1$ and $0<\lambda<N$ with $\frac{1}{r}+\frac{1}{t}+\frac{\lambda}{N}=2$, $f\in L^r(\mathbb{R}^N)$ and $g\in L^t(\mathbb{R}^N)$. There exists a sharp constant $C(N,r,t,\lambda)>0$, independent of $f$ and $g$, such that
    \begin{equation}\label{hlsi}
    \left|\int_{\mathbb{R}^N}\int_{\mathbb{R}^N}
    \frac{f(x)g(y)}{|x-y|^\lambda}\mathrm{d}y\mathrm{d}x\right|
    \leq C(N,t,\lambda)\|f\|_{L^r(\mathbb{R}^N)}\|g\|_{L^t(\mathbb{R}^N)}.
    \end{equation}
    If $t=r=\frac{2N}{2N-\lambda}$, then
    \begin{align*}
    C(N,t,\lambda)=C_1(N,\lambda)
    =\pi^{\frac{\lambda}{2}}\frac{\Gamma(\frac{N}{2}-\frac{\lambda}{2})}
    {\Gamma(N-\frac{\lambda}{2})}
    \left\{\frac{\Gamma(\frac{N}{2})}{\Gamma(N)}
    \right\}^{-1+\frac{\lambda}{N}},
    \end{align*}
    and there is equality in \eqref{hlsi} if and only if $f\equiv (const.) g$ and
    \begin{align*}
    f(x)=A(c+|x-z|^2)^{-\frac{2N-\lambda}{2}}
    \end{align*}
    for $A\in \mathbb{R}$, $c>0$ and $z\in \mathbb{R}^N$. Furthermore, when $t=2$ and $r=\frac{2N}{3N-2\lambda}$, which requires $N<2\lambda<2N$, then
    \begin{align*}
    C(N,t,\lambda)=C_2(N,\lambda)
    =\pi^{\frac{\lambda}{2}}\frac{\Gamma(\frac{N}{2}-\frac{\lambda}{2})}
    {\Gamma(N-\frac{\lambda}{2})}
    \left\{\frac{\Gamma(\lambda-\frac{N}{2})}
    {\Gamma(\frac{3N}{2}-\lambda)}\right\}^{\frac{1}{2}}
    \left\{\frac{\Gamma(\frac{N}{2})}{\Gamma(N)}
    \right\}^{-1+\frac{\lambda}{N}},
    \end{align*}
    and there is equality in \eqref{hlsi} if and only if
    \begin{align*}
    & f(x)=A(c+|x-z|^2)^{-\frac{3N-2\lambda}{2}},\quad g(x)=B\int_{\mathbb{R}^N}
    \frac{(c+|y-z|^2)^{-\frac{3N-2\lambda}{2}}}
    {|x-y|^{\lambda}}\mathrm{d}y
    \end{align*}
    for $A, B\in \mathbb{R}$, $c>0$ and $z\in \mathbb{R}^N$.
    \end{proposition}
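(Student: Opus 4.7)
The plan is to follow Lieb's 1983 strategy, which combines symmetrization, conformal invariance, and a compactness-modulo-symmetries argument.

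First, I would establish the inequality \eqref{hlsi} with some (non-sharp) finite constant. The cleanest route is to view the left-hand side as $\langle f, I_\lambda g\rangle$, where $I_\lambda g(x) = \int_{\mathbb{R}^N} |x-y|^{-\lambda} g(y)\,dy$ is the Riesz potential. Using that $|x|^{-\lambda}$ lies in weak-$L^{N/\lambda}$, the weak Young inequality (or Marcinkiewicz interpolation) yields a bounded $L^t \to L^{r'}$ mapping under the Sobolev scaling $\frac{1}{r}+\frac{1}{t}+\frac{\lambda}{N}=2$, giving the non-sharp bound. Next, I would invoke the Riesz rearrangement inequality to replace $f$ and $g$ by their symmetric decreasing rearrangements $f^\ast$ and $g^\ast$; the left-hand side of \eqref{hlsi} does not decrease, while the right-hand side is preserved. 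Hence any putative extremizing pair may be assumed radially symmetric and non-increasing.

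Second, I would address existence of extremizers. The difficulty is that the functional is invariant under a non-compact group: dilations, translations, and, crucially, the inversion $x\mapsto x/|x|^2$ (with the appropriate Jacobian weight on $f$ and $g$). A maximizing sequence of radial decreasing functions can therefore concentrate or vanish; the remedy is to normalize each element using dilations (and translations, which can be suppressed by radiality) so that a fixed mass sits in a bounded annulus. One then extracts a non-trivial weak limit, and upgrades weak to strong convergence on the sphere via a concentration-compactness argument tailored to the conformal group. This yields an extremizing pair $(f_0,g_0)$.

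Third, for the diagonal case $r=t=\frac{2N}{2N-\lambda}$, I would transport the problem to $S^N$ via stereographic projection. The conformal invariance of the integrand rewrites \eqref{hlsi} as
\begin{equation*}
\left|\int_{S^N}\int_{S^N}\frac{F(\xi)G(\eta)}{|\xi-\eta|^\lambda}\,d\sigma(\xi)\,d\sigma(\eta)\right|
\le C\,\|F\|_{L^{2N/(2N-\lambda)}(S^N)}\|G\|_{L^{2N/(2N-\lambda)}(S^N)},
\end{equation*}
with $|\xi-\eta|$ the chordal distance. Since constants are the only symmetric decreasing functions on $S^N$ and an extremizer exists, the Euler--Lagrange equation together with a uniqueness/rigidity argument (using that the conformal group acts transitively on pairs) forces $F$ and $G$ to be proportional to powers of a conformal factor. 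Pulling back to $\mathbb{R}^N$ produces the family $A(c+|x-z|^2)^{-(2N-\lambda)/2}$. The sharp constant $C_1(N,\lambda)$ is then computed by evaluating both sides on $f(x)=(1+|x|^2)^{-(2N-\lambda)/2}$ using the beta-function identity $\int_{\mathbb{R}^N}(1+|x|^2)^{-s}\,dx=\pi^{N/2}\Gamma(s-N/2)/\Gamma(s)$ for the $L^r$ norms, and the classical formula $I_\lambda[(1+|x|^2)^{-\alpha}]$ for the convolution.

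Fourth, for the non-diagonal case $t=2$, $r=\frac{2N}{3N-2\lambda}$, duality identifies the extremal problem with finding $\sup \|I_{\lambda/2}f\|_{L^2}/\|f\|_{L^r}$ after a composition, but it is more efficient to argue directly: by symmetrization one again restricts to radial decreasing $f$, and the Euler--Lagrange equation $g=B\,I_\lambda f$ is built into the extremal ansatz for $g$. Conformal invariance and the existence argument above supply an extremizer; the radial ODE then forces $f(x)=A(c+|x-z|^2)^{-(3N-2\lambda)/2}$, and $g$ is the stated Riesz potential. The constant $C_2(N,\lambda)$ is obtained by the same gamma-function computation applied to this specific $f$, together with the explicit evaluation of $I_\lambda$ on such a function. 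The principal obstacle throughout is the non-compactness of the conformal symmetry group: producing an extremizer, and subsequently classifying it up to these symmetries, is the step that carries all the real content; the remaining calculations of $C_1$ and $C_2$ are bookkeeping with beta and gamma integrals.
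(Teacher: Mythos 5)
The paper itself offers no proof of Proposition \ref{prohlsi}: it is quoted verbatim from Lieb's 1983 paper (with \cite{HL28,So63} for the non-sharp inequality), so the only meaningful comparison is with Lieb's argument, of which your proposal is an outline. As an outline it identifies the right ingredients (weak Young for the rough bound, Riesz rearrangement, conformal invariance, existence of extremizers modulo the non-compact symmetry group, stereographic projection, Gamma-function bookkeeping), but the two steps that carry all the content are not actually proved, and one of them rests on a false assertion. In the diagonal case you write that ``constants are the only symmetric decreasing functions on $S^N$''; this is not true -- symmetric decreasing rearrangement on the sphere produces functions that are nonincreasing in the geodesic distance from a pole, and these form a huge class. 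Consequently the conclusion that the extremizer on $S^N$ is constant (equivalently, that the extremizer on $\mathbb{R}^N$ is a conformal factor $(c+|x-z|^2)^{-(2N-\lambda)/2}$) does not follow from rearrangement plus the Euler--Lagrange equation alone. Lieb's actual classification uses a delicate interplay between rearrangement and the conformal group (later streamlined by Carlen--Loss via ``competing symmetries''), and the uniqueness statement ``equality iff $f\equiv(\mathrm{const.})\,g$ of the stated form'' needs the analysis of the equality case in the Riesz rearrangement inequality as well; none of this is supplied.

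For the off-diagonal case $t=2$, $r=\frac{2N}{3N-2\lambda}$, optimizing in $g$ at fixed $\|g\|_{L^2}$ indeed reduces the problem to maximizing $\|I_\lambda f\|_{L^2}/\|f\|_{L^r}$, but there is no ``radial ODE'' that forces $f(x)=A(c+|x-z|^2)^{-(3N-2\lambda)/2}$: the Euler--Lagrange relation is a nonlocal integral equation, and identifying its extremal solutions is again the hard step (in Lieb's paper it is handled by a separate duality/uniqueness argument; alternatively one can invoke a moving-planes classification for integral equations in the spirit of Chen--Li--Ou, which is exactly the machinery the present paper leans on in Section \ref{sectir}). Likewise, the existence of extremizers for a maximizing sequence that may concentrate or escape under dilations/translations/inversion is asserted rather than established; this is where the constraint $N<2\lambda<2N$ and the restriction $r<\frac{t}{t-1}$ in Lieb's existence theorem enter. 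So the proposal is a correct road map of Lieb's proof, but as written it has genuine gaps at precisely the points where the theorem is nontrivial: existence of extremizers modulo the conformal group, and the classification/uniqueness of extremizers in both cases.
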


    Hardy and Littlewood also introduce the double weighted inequaliy in $(0,\infty)$, which was later generalized in $\mathbb{R}^N$ by Stein and Weiss in \cite{SW58}. It reads:
    \begin{proposition}\label{proswi}
    ({\bfseries Stein-Weiss inequality}). Let $r,t>1$ and $0<\lambda<N$ with $\frac{1}{r}+\frac{1}{t}+\frac{\lambda+a+b}{N}=2$, $f\in L^r(\mathbb{R}^N)$ and $g\in L^t(\mathbb{R}^N)$. There exists a constant $C(N,t,\lambda,a,b)>0$, independent of $f$ and $g$, such that
    \begin{equation}\label{swi}
    \left|\int_{\mathbb{R}^N}\int_{\mathbb{R}^N}
    \frac{f(x)g(y)}{|x|^a|x-y|^\lambda|y|^b}\mathrm{d}y\mathrm{d}x\right|
    \leq C(N,t,\lambda,a,b)\|f\|_{L^r(\mathbb{R}^N)}\|g\|_{L^t(\mathbb{R}^N)},
    \end{equation}
    under the conditions
    \[
    a+b\geq 0,\quad 1-\frac{1}{r}-\frac{\lambda}{N}<\frac{a}{N}<1-\frac{1}{r}.
    \]
    \end{proposition}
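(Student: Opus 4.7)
My plan is to derive \eqref{swi} from the unweighted Hardy--Littlewood--Sobolev inequality (Proposition \ref{prohlsi}) by a dyadic decomposition of the weights. First, by duality the estimate \eqref{swi} is equivalent to the boundedness of the linear operator
\begin{equation*}
Tf(x):=|x|^{-a}\int_{\mathbb{R}^N}\frac{f(y)}{|x-y|^\lambda |y|^b}\,\mathrm{d}y
\end{equation*}
from $L^r(\mathbb{R}^N)$ to $L^{t'}(\mathbb{R}^N)$, where $1/t+1/t'=1$. The balance relation $\frac{1}{r}+\frac{1}{t}+\frac{\lambda+a+b}{N}=2$ is precisely the scaling-invariance condition for such a bound, and, combined with the stated constraints on $a/N$, it implies the symmetric range $1-\frac{1}{t}-\frac{\lambda}{N}<\frac{b}{N}<1-\frac{1}{t}$ for $b$.

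Next, I would introduce the dyadic annuli $A_j:=\{x\in\mathbb{R}^N:\,2^j\leq |x|<2^{j+1}\}$ and decompose $f=\sum_{k\in\mathbb{Z}}f_k$ with $f_k:=f\chi_{A_k}$. For $x\in A_j$ I split the resulting sum for $Tf(x)$ into three parts according to $k\leq j-2$, $|k-j|\leq 1$, and $k\geq j+2$. On the off-diagonal parts $|x-y|\sim\max(|x|,|y|)\sim 2^{\max(j,k)}$, so the kernel collapses to a product of powers of $2^{\max(j,k)}$ and each contribution is bounded by Hölder's inequality. On the diagonal part $|x|\sim|y|\sim 2^j$, so after extracting the weights $|x|^{-a}\sim 2^{-ja}$ and $|y|^{-b}\sim 2^{-jb}$ the remaining integral is a pure Riesz potential on a dyadic annulus and is controlled locally by Proposition \ref{prohlsi}. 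The strict upper bound $\frac{a}{N}<1-\frac{1}{r}$ ensures convergence of the near-origin sum $k\leq j-2$, while the strict lower bound $\frac{a}{N}>1-\frac{1}{r}-\frac{\lambda}{N}$ ensures convergence of the far-field sum $k\geq j+2$; the sign hypothesis $a+b\geq 0$ is used on the diagonal piece to keep the resulting dyadic exponent nonpositive.

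Finally, one assembles the three pieces and sums in $j$ using Minkowski's inequality in $\ell^{t'}$ (or Young's inequality on $\mathbb{Z}$ for the off-diagonal convolution-type sums in the index difference $|j-k|$). The principal technical obstacle is the precise bookkeeping of exponents: one must verify that each geometric series in $|j-k|$ has ratio strictly less than one exactly under the stated strict inequalities on $a/N$, and that the diagonal HLS constants can be combined into a single uniform bound. Once this accounting is carried out, undoing the duality yields \eqref{swi} with a finite (not necessarily sharp) constant $C(N,t,\lambda,a,b)$.
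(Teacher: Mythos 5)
A preliminary remark: the paper does not prove this proposition at all --- it is quoted verbatim from the classical paper of Stein and Weiss \cite{SW58} --- so your dyadic scheme is measured against the classical argument rather than anything in the text. The overall route (duality, decomposition into annuli $A_j$, kernel collapse plus H\"older off the diagonal, HLS on the diagonal, geometric summation in the gap $|j-k|$) is a standard and viable one, and your off-diagonal bookkeeping is essentially right: the balance condition makes the $j$-exponent vanish and the two strict inequalities on $a/N$ (equivalently $a<N/r'$ and $b<N/t'$) are exactly what make the two series in $|j-k|$ geometric. (Minor point: your dual operator $Tf(x)=|x|^{-a}\int f(y)|x-y|^{-\lambda}|y|^{-b}\,\mathrm{d}y$ interchanges the roles of $a$ and $b$ relative to \eqref{swi}, so your attribution of which strict bound controls which off-diagonal regime is swapped; this is harmless.)

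There are, however, two genuine gaps. First, the diagonal step as described is not valid: after extracting $|x|^{-a}|y|^{-b}\sim 2^{-j(a+b)}$ you cannot apply Proposition \ref{prohlsi} to the remaining Riesz potential with the exponents $r,t$, since $\frac1r+\frac1t+\frac{\lambda}{N}=2-\frac{a+b}{N}\neq 2$ unless $a+b=0$. You must either absorb $|x-y|^{a+b}\leq C\,2^{j(a+b)}$ (this is where $a+b\geq 0$ enters) and apply HLS with the exponent $\lambda+a+b$, which requires $\lambda+a+b<N$, or lower the Lebesgue exponents onto the HLS line by H\"older on the finite-measure annuli, which regains exactly a factor $2^{j(a+b)}$; in either corrected version the net dyadic exponent is exactly zero, not ``nonpositive'', so nothing is gained scale by scale. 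Second, and consequently, the final summation is a real obstruction: the diagonal sum $\sum_j\|f_j\|_{L^r}\|g_j\|_{L^t}$ and, for each fixed gap $m$, the off-diagonal sums over $j$ close under H\"older only if $\ell^r\cdot\ell^t\subset\ell^1$, i.e. $\frac1r+\frac1t\geq 1$, equivalently $\lambda+a+b\leq N$, i.e. $r\leq t'$. This is precisely the hypothesis $p\leq q$ of the classical Stein-Weiss theorem, it is not listed in the proposition, and it is not implied by the listed conditions (for instance $r=t=4$, $a=b=0.6N$, $\lambda=0.3N$ satisfies everything stated); without it the inequality itself fails, as one sees by testing \eqref{swi} with $f=\sum_{k\leq M}2^{-kN/r}\chi_{A_k}$ and $g=\sum_{k\leq M}2^{-kN/t}\chi_{A_k}$, which forces $M\lesssim M^{\frac1r+\frac1t}$. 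So your plan cannot be completed on the full stated range; you should add (or flag as implicit) the condition $\frac1r+\frac1t\geq1$, after which the corrected diagonal step together with your summation scheme does yield \eqref{swi}.
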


    Lieb \cite{Lieb83} established the existence of extremal functions for \eqref{swi} with sharp constant when $r<\frac{t}{t-1}$ and $a,b\geq 0$, and the author claimed that no nontrivial extremal functions can exist when $r=\frac{t}{t-1}$.
    Furthermore, Beckner \cite{Be08} established a kind of sharp Stein-Weiss inequality which states that, for $f\in L^2(\mathbb{R}^N)$ and $0<\lambda<N$,
    \begin{equation*}
    \left|\int_{\mathbb{R}^N}\int_{\mathbb{R}^N}
    \frac{f(x)f(y)}{|x|^{\frac{N-\lambda}{2}}
    |x-y|^\lambda|y|^{\frac{N-\lambda}{2}}}\mathrm{d}y\mathrm{d}x\right|
    \leq \pi^{\frac{N}{2}}\frac{\Gamma(\frac{N-\lambda}{2})}
    {\Gamma(\frac{\lambda}{2})}
    \left\{\frac{\Gamma(\frac{\lambda}{4})}{\Gamma(\frac{2N-\lambda}{4})}
    \right\}^{2}\|f\|^2_{L^2(\mathbb{R}^N)},
    \end{equation*}
    with no nontrivial extremal functions, in which the author established the equivalence relationship with Pitt's inequality, that is, for $0< \alpha<N$,
    \begin{equation*}
    \int_{\mathbb{R}^N}\frac{|f|^2}{|x|^{\alpha}}\mathrm{d}x
    \leq 2^{-\alpha}\left\{\frac{\Gamma(\frac{N-\alpha}{4})}
    {\Gamma(\frac{N+\alpha}{4})}
    \right\}^{2}\int_{\mathbb{R}^N}|(-\Delta)^{\alpha/4}f|^2\mathrm{d}x.
    \end{equation*}

    Therefore, we want to establish some other sharp Stein-Weiss inequalities as in Proposition \ref{prohlsi}. From the work of Lieb \cite{Lieb83} (see also Chen and Li \cite{CL08}), we notice that the corresponding Euler-Lagrange equations of Stein-Weiss inequality \eqref{swi} are the integral system:
    \begin{eqnarray}\label{swele}
    \left\{ \arraycolsep=1.5pt
       \begin{array}{ll}
        \mu_1rf(x)^{r-1}=\frac{1}{|x|^a}\int_{\mathbb{R}^N}
        \frac{g(y)}{|y|^b|x-y|^\lambda}\mathrm{d}y
        \quad\mbox{in}\quad\mathbb{R}^N,\\[2mm]
        \mu_2tg(x)^{t-1}=\frac{1}{|x|^b}\int_{\mathbb{R}^N}
        \frac{f(y)}{|y|^a|x-y|^\lambda}\mathrm{d}y
        \quad\mbox{in}\quad\mathbb{R}^N,\\[2mm]
        f,g\geq 0\quad\mbox{in}\quad\mathbb{R}^N,
        \end{array}
    \right.
    \end{eqnarray}
    where $\mu_1,\mu_2>0$ are positive parameters. Let $w=c_1f^{r-1}$, $z=c_2g^{t-1}$, $q=\frac{1}{r-1}$, $p=\frac{1}{t-1}$, $pq\neq 1$, and for a proper choice of positive constants $c_1$ and $c_2$, then system \eqref{swele} becomes
    \begin{eqnarray}\label{swelet}
    \left\{ \arraycolsep=1.5pt
       \begin{array}{ll}
        w(x)=\frac{1}{|x|^a}\int_{\mathbb{R}^N}
        \frac{z(y)^p}{|y|^b|x-y|^\lambda}\mathrm{d}y
        \quad\mbox{in}\quad\mathbb{R}^N,\\[2mm]
        z(x)=\frac{1}{|x|^b}\int_{\mathbb{R}^N}
        \frac{w(y)^q}{|y|^a|x-y|^\lambda}\mathrm{d}y
        \quad\mbox{in}\quad\mathbb{R}^N,\\[2mm]
        w,z\geq 0\quad\mbox{in}\quad\mathbb{R}^N.
        \end{array}
    \right.
    \end{eqnarray}
    Similar as the work of Chen, Li and Ou \cite[Theorem 4.1]{CLO06}, we know integral system \eqref{swelet} is equivalent to (multiplies with suitable constants)
    \begin{eqnarray}\label{swelete}
    \left\{ \arraycolsep=1.5pt
       \begin{array}{ll}
        (-\Delta)^{\frac{N-\lambda}{2}}(|x|^aw)=\frac{w^p}{|x|^b}
        \quad\mbox{in}\quad\mathbb{R}^N,\\[2mm]
        (-\Delta)^{\frac{N-\lambda}{2}}(|x|^bz)=\frac{z^q}{|x|^a}
        \quad\mbox{in}\quad\mathbb{R}^N,\\[2mm]
        w,z\geq 0\quad\mbox{in}\quad\mathbb{R}^N.
        \end{array}
    \right.
    \end{eqnarray}
    Let $u(x)=|x|^a w(x)$, $v(x)=|x|^b z(x)$, then \eqref{swelete} becomes
    \begin{eqnarray}\label{sweletet}
    \left\{ \arraycolsep=1.5pt
       \begin{array}{ll}
        (-\Delta)^{\frac{N-\lambda}{2}}u=\frac{v^p}{|x|^{b(p+1)}}
        \quad\mbox{in}\quad\mathbb{R}^N,\\[2mm]
        (-\Delta)^{\frac{N-\lambda}{2}}v=\frac{u^q}{|x|^{a(q+1)}}
        \quad\mbox{in}\quad\mathbb{R}^N,\\[2mm]
        u,v\geq 0\quad\mbox{in}\quad\mathbb{R}^N.
        \end{array}
    \right.
    \end{eqnarray}
    By using the method of moving planes, Jin and Li \cite{JL06} obtain the symmetry result which states that, for $a,b\geq 0$, then positive solution pair $(u,v)$ of \eqref{sweletet} are radially symmetric and decreasing about some point.
    Furthermore, the work of Chen and Li \cite[Theorem 2]{CL08} indicates that if $\lambda=N-2$, $a=b=\frac{N}{p+1}-\frac{N-2}{2}$ and $p=q$ with $b(p+1)<2$, then $u(x)\equiv v(x)$ and they must both assume the form
    $$\phi_c(x)=d\left(c+|x|^{\frac{(N-2)(p-1)}{2}}\right)^{-\frac{2}{p-1}}
    $$
    with some real number $c$ and some constant $d$. In fact, this result gives the extremal functions and also the sharp constant of Stein-Weiss inequality \eqref{swi} in this case.
    Now, let us consider another case:
    \begin{align}\label{condition}
    \lambda=N-2,\quad p=1,\quad b(p+1)=a(q+1)<2,
    \end{align}
    then $a=\frac{b(N-4+2b)}{N-2b}$ and $0\leq b<1$. When $b=0$, Lin \cite{Li98} has classified all solutions of \eqref{sweletet}. We only consider $0<b<1$. Obviously, for the case \eqref{condition} we know \eqref{sweletet} is equivalent to the following weighted fourth-order equation
    \begin{equation}\label{lehe}
    \Delta(|x|^{-\gamma}\Delta u)=|x|^\gamma u^q,\quad u\geq 0 \quad \mbox{in}\quad \mathbb{R}^N,
    \end{equation}
    where $\gamma=-2b\in (-2,0)$, and $q=\frac{N+4+3\gamma}{N-4-\gamma}$. We will classify all solutions of \eqref{lehe}.

    \begin{theorem}\label{thmlehups}
    Assume that $N\geq 5$ and $-2<\gamma<0$. Let $u$ be a smooth solution of equation \eqref{lehe}, then $u$ is radially symmetric and has the following form:
    \begin{align}\label{defula}
    u(x)=U_\lambda(x)= \lambda^{\frac{N-4-\gamma}{2}}U(\lambda x)\quad \mbox{for}\quad \lambda\geq 0,\quad \mbox{with}\quad U(x)=\frac{C_{N,\gamma}}
    {(1+|x|^{2+\gamma})
    ^{\frac{N-4-\gamma}{2+\gamma}}}.
    \end{align}
    Here $C_{N,\gamma}=\left[(N-4-\gamma)(N-2)
    (N+\gamma)(N+2+2\gamma)\right]
    ^{\frac{N-4-\gamma}{4(2+\gamma)}}$.
    \end{theorem}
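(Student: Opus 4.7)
My plan has three steps: reduce the PDE \eqref{lehe} to the integral system \eqref{sweletet} already derived in the introduction, apply Jin-Li's moving-planes theorem to obtain radial symmetry about the origin, and then solve the resulting radial ODE to identify the one-parameter family $U_\lambda$ and the constant $C_{N,\gamma}$.

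For step one, set $v := -|x|^{-\gamma}\Delta u$. The chain of substitutions already written out in the excerpt, with $\lambda = N-2$, $p = 1$, and $a(q+1) = b(p+1) = -\gamma$, shows that \eqref{lehe} is formally equivalent to the cooperative system
\begin{equation*}
-\Delta u = |x|^{\gamma} v,\qquad -\Delta v = |x|^{\gamma} u^{q},\qquad u, v \ge 0.
\end{equation*}
Making this rigorous requires (i) $v \ge 0$ (so that $u$ is weighted-superharmonic) and (ii) the Newton-potential representations for $u$ and $v$, so that \eqref{sweletet} actually holds. The first follows from a standard integration/maximum-principle argument using positivity of the nonlinearity and the decay forced on any nontrivial solution; the second from a Brezis-Lions-type asymptotic analysis together with the integrability of the right-hand sides forced by the Stein-Weiss inequality. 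The strong maximum principle then upgrades $u \ge 0$ to $u > 0$ unless $u \equiv 0$.

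For step two, since $\gamma \in (-2, 0)$ one has $b = -\gamma/2 \in (0, 1)$ and $a = b(N-4+2b)/(N-2b) > 0$, so both weight exponents are nonnegative. Jin-Li's theorem \cite{JL06}, recalled just before the statement, then forces $(u, v)$ to be radially symmetric and monotone decreasing about some point. Since \eqref{lehe} is not translation invariant --- the weights $|x|^{\gamma}$ single out $0$ --- the center must be the origin. Alternatively, the moving-planes procedure itself pins the center to $0$, since started from any hyperplane not through $0$ it cannot stop before reaching one that passes through $0$. For step three, insert the ansatz $U(r) = C(1+r^{2+\gamma})^{-(N-4-\gamma)/(2+\gamma)}$ into \eqref{lehe}: after computing $\Delta U$ and $\Delta(r^{-\gamma}\Delta U)$ in the radial variable, the equation reduces to the algebraic identity $C^{\,q-1} = (N-4-\gamma)(N-2)(N+\gamma)(N+2+2\gamma)$, which is exactly the stated $C_{N,\gamma}$. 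For uniqueness modulo the scaling $u \mapsto \lambda^{(N-4-\gamma)/2} u(\lambda \cdot)$ that leaves \eqref{lehe} invariant, I would write the radial ODE as an autonomous fourth-order equation via the Emden-Fowler substitution $u(r) = r^{-(N-4-\gamma)/2}\psi(\log r)$ and use a standard phase-plane argument (invoking regularity at $r=0$, positivity, and integrable decay at infinity) to cut down the four-parameter family of radial solutions to the single orbit of $U$ under translation in $s=\log r$, which is exactly $\{U_\lambda\}$.

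The main obstacle I expect is step one: verifying $v = -|x|^{-\gamma}\Delta u \ge 0$ and the validity of the Newton-potential representation in the presence of the degenerate weights $|x|^{\gamma}$, which make integrability and regularity near the origin delicate. Once this is in place the remainder is tightly constrained: Jin-Li supplies radial symmetry about $0$, and the explicit scaling together with the known ansatz $U$ reduce the ODE classification essentially to matching a single constant.
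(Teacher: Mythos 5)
Your outline (reduce to a system, prove radial symmetry by moving planes, then classify the radial solutions by an ODE argument) is a genuinely different route from the paper's, and it is much heavier; more importantly, as written it has a real gap at the decisive step. The paper does not invoke Jin--Li and does no ODE analysis at all: after rewriting \eqref{lehe} as the system \eqref{lehs} and getting $v=-|x|^{-\gamma}\Delta u\ge 0$ from the maximum principle, it quotes the uniqueness theorem of Chen and Li \cite{CL08} for the weighted \emph{differential} system \eqref{wlescl} (with $p=1$, $b=-\gamma/2\in(0,1)$, $a=\frac{b(N-4+2b)}{N-2b}$): any two solution pairs with the same value of $u$ at the origin coincide. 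Since $U_\lambda$ is already known to solve \eqref{lehe} and $\lambda\mapsto U_\lambda(0)$ sweeps all admissible values, one chooses $\lambda$ with $u(0)=U_\lambda(0)$ and concludes $u\equiv U_\lambda$ in one line; radial symmetry is then inherited from $U_\lambda$ rather than proved separately, and no passage to the integral system \eqref{sweletet} (hence no Newton-potential/Brezis--Lions analysis) is needed.

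The genuine gap in your plan is Step three. For a fourth-order radial equation the phase space is four-dimensional, and even after imposing regularity at $r=0$ and quotienting by the scaling invariance you are left with a shooting problem in the parameter $\Delta u(0)$ (relative to $u(0)$); showing that the only positive entire solutions with the required behavior form the single orbit of $U$ is precisely the hard classification result, not a ``standard phase-plane argument''. This is the content of \cite{Li98} for $\gamma=0$ and of the weighted radial classifications in \cite{Ya21,DT23-jfa,DGT23-jde}, proved there by delicate shooting/energy or integral-equation arguments; your proposal in effect assumes this at the point where the paper instead imports Chen--Li's uniqueness. Two smaller soft spots: (i) ``the equation is not translation invariant, hence the center of symmetry is the origin'' is not an argument --- a solution radial about some $z\neq 0$ is not excluded by that remark alone; you would have to quote the precise form of \cite{JL06} (where the singular weights pin the moving planes at the origin) or run that step yourself; (ii) the equivalence with the integral system \eqref{sweletet}, which you correctly flag as delicate, is extra work that the paper's route avoids entirely. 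If you replace your Step three by the Chen--Li uniqueness theorem, Steps one and two become unnecessary and you recover the paper's short proof.
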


Therefore, by the results of Chen and Li \cite[Theorem 2]{CL08} and Theorem \ref{thmlehups}, we can classify the extremal functions of Stein-Weiss inequality \eqref{swi} when $t=r$ and $t=2$ separately, which extends the work of Lieb \cite{Lieb83}.

    \begin{theorem}\label{thmswia}
    ({\bfseries Sharp Stein-Weiss inequality}). Assume the conditions in Proposition \ref{proswi} hold with $\lambda=N-2$.
    If $N\geq 3$, $t=r=\frac{2N}{N+2-2b}$ and $0<a=b<1$, then there is equality in \eqref{swi} with sharp constant if and only if $f\equiv (const.) g$ and
    \begin{align*}
    & f(x)=A|x|^{-\frac{b(N+2(1-b))}{N-2(1-b)}}
    \left(c+|x|^{\frac{2(N-2)(1-b)}{N-2(1-b)}}
    \right)^{-\frac{N+2(1-b)}{2(1-b)}},
    \end{align*}
    for $A\in \mathbb{R}$, $c>0$.
    Moreover, if $N\geq 5$, $t=2$ and $r=\frac{N+4-6b}{2(N-2b)}$, with $a=\frac{b(N-4+2b)}{N-2b}$ and $0<b<1$, then there is equality in \eqref{swi} with sharp constant if and only if
    \begin{align*}
    f(x) & =A|x|^{-\frac{b(N+4-2b)}{N-2b}}
    (c+|x|^{2-2b})^{-\frac{N+4-2b}{2-2b}},\\
    g(x) & =B\left[(N-2b)c+(2-2b)|x|^{2-2b}\right]|x|^{-b}
    (c+|x|^{2-2b})
    ^{-\frac{N-2b}{2-2b}},
    \end{align*}
    for $A, B\in \mathbb{R}$, $c>0$.
    \end{theorem}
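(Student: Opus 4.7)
The plan is to reduce the identification of extremizers of \eqref{swi} to classifying positive solutions of the associated Euler--Lagrange system, and then invoke the existing symmetry result of Chen--Li \cite[Theorem 2]{CL08} for the first case and the new Theorem \ref{thmlehups} for the second. Existence of extremizers is guaranteed by Lieb's theorem under $a,b\ge 0$ and $r<t/(t-1)$, the latter reducing in both cases to $b<1$, which is part of our hypotheses. Every extremizer satisfies the Euler--Lagrange system \eqref{swele}; the substitutions $w=c_1 f^{r-1}$, $z=c_2 g^{t-1}$ followed by $u=|x|^a w$, $v=|x|^b z$ (with $\lambda=N-2$) convert \eqref{swele} into the local PDE system \eqref{sweletet}, to which Jin--Li's moving-planes result \cite{JL06} already guarantees that $(u,v)$ is radially symmetric and decreasing about the origin.

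For the first assertion, $t=r$ together with $a=b$ forces $p=q$ and $b(p+1)=a(q+1)$, so \eqref{sweletet} becomes the symmetric single-weight system treated in \cite{CL08}. Their Theorem~2 gives $u\equiv v=\phi_c(x)=d(c+|x|^{(N-2)(p-1)/2})^{-2/(p-1)}$. Unwinding $f=c_1^{-q}(u/|x|^a)^{q}$ with $q=1/(r-1)=(N+2-2b)/(N-2+2b)$ and computing the two resulting exponents $aq$ and $2q/(p-1)$ then matches directly the announced formula for $f$, with $g$ equal to $f$ up to a constant.

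For the second assertion, $t=2$ gives $p=1$; the first line of \eqref{sweletet} reads $-\Delta u=v/|x|^{2b}$, so $v=-|x|^{2b}\Delta u$. Inserting this into the second line and using $b(p+1)=a(q+1)=2b$ shows that $u$ solves exactly the weighted fourth-order equation \eqref{lehe} with $\gamma=-2b\in(-2,0)$ and critical exponent $q=(N+4+3\gamma)/(N-4-\gamma)$. Theorem \ref{thmlehups} then pins $u$ down as a member of the family $U_\lambda$ in \eqref{defula}. It remains to compute $v=-|x|^{2b}\Delta U_\lambda$ by direct differentiation of the explicit $U$; a short calculation produces the prefactor $[(N-2b)c+(2-2b)|x|^{2-2b}]$ together with the exponent $-(N-2b)/(2-2b)$ displayed in the statement, while $f=c_1^{-q}(u/|x|^a)^q$ is extracted exactly as in the first case.

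The main technical obstacle is not the classification itself but the regularity and positivity of extremizers required to deploy \cite{CL08} and Theorem \ref{thmlehups} in the classical sense, in particular the asymptotic control of $u,v$ at the weighted singularity $x=0$ and at infinity. A standard, though delicate, regularity bootstrap combined with a removable-singularity argument at the origin is needed before the PDE classifications can be applied. Once this is in place, all remaining steps are algebraic: matching exponents through the chain of substitutions (which depend nonlinearly on $b$) and finally reading off the sharp constants by evaluating \eqref{swi} at the explicit extremizers.
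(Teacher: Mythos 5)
Your proposal follows essentially the same route as the paper: both reduce extremizers of \eqref{swi} with $\lambda=N-2$ to the PDE system \eqref{sweletet}, invoke Chen--Li's Theorem~2 from \cite{CL08} in the case $t=r$, $a=b$, and Theorem \ref{thmlehups} (via \eqref{lehe} with $\gamma=-2b$, after eliminating $v=-|x|^{2b}\Delta u$) in the case $t=2$, then recover $g$ by applying $-|x|^{b}\Delta$ to the explicit profile and obtain the sharp constants by substituting the extremal functions into \eqref{swi}. The paper's own proof is in fact terser than your write-up (it does not spell out Lieb's existence step, the Jin--Li symmetry input, or the regularity/removable-singularity bookkeeping you flag), so your argument is, if anything, a slightly more detailed version of the same one.
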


    Furthermore, as a direct conclusion of Theorem \ref{thmlehups}, we obtain
    \begin{theorem}\label{corolehups}
    Assume that $N\geq 5$ and $-2<\gamma<0$. For all
    \[
    u\in \mathcal{D}^{2,2}_{0,\gamma}(\mathbb{R}^N):=
    \left\{u\in C^\infty_0(\mathbb{R}^N): \int_{\mathbb{R}^N}\frac{|\Delta u|^2}{|x|^{\gamma}} \mathrm{d}x<\infty,\quad \int_{\mathbb{R}^N}
    |x|^{\gamma}|u|^{2^{**}_{0,\gamma}} \mathrm{d}x<\infty\right\},
    \]
    it holds that
    \begin{equation}\label{lehi}
    \int_{\mathbb{R}^N}\frac{|\Delta u|^2}{|x|^{\gamma}} \mathrm{d}x
    \geq \mathcal{S}_{N,\gamma}
    \left(\int_{\mathbb{R}^N}
    |x|^{\gamma}|u|^{2^{**}_{0,\gamma}} \mathrm{d}x\right)^{\frac{2}{2^{**}_{0,\gamma}}},
    \end{equation}
    where $2^{**}_{0,\gamma}:=\frac{2(N+\gamma)}{N-4-\gamma}$, and
    \begin{align}\label{defsclehi}
    \mathcal{S}_{N,\gamma}=\left(\frac{2}{2+\gamma}\right)
    ^{\frac{2(2+\gamma)}{N+\gamma}-4}
    \left(\frac{2\pi^{\frac{N}{2}}}{\Gamma(\frac{N}{2})}\right)
    ^{\frac{2(2+\gamma)}{N+\gamma}}
    \mathcal{B}\left(\frac{2(N+\gamma)}{2+\gamma}\right),
    \end{align}
    where $\mathcal{B}(M)=(M-4)(M-2)M(M+2)
    \left[\Gamma^2(\frac{M}{2})/(2\Gamma(M))\right]^{\frac{4}{M}}$ and $\Gamma$ is the Gamma function.
    Furthermore, the constant $\mathcal{S}_{N,\gamma}$ is sharp and equality holds if and only if $u(x)=AU_\lambda(x)$
    for $A\in\mathbb{R}$ and $\lambda>0$, where $U_\lambda$ is given as in \eqref{defula}.
    \end{theorem}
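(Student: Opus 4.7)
The plan is to establish Theorem \ref{corolehups} as a consequence of the classification in Theorem \ref{thmlehups} via a standard variational argument. Set
\[
\widetilde{\mathcal{S}}_{N,\gamma}:=\inf_{u\in\mathcal{D}^{2,2}_{0,\gamma}\setminus\{0\}} \frac{\int_{\mathbb{R}^N}|\Delta u|^2|x|^{-\gamma}\,\mathrm{d}x}{\bigl(\int_{\mathbb{R}^N}|x|^\gamma|u|^{2^{**}_{0,\gamma}}\,\mathrm{d}x\bigr)^{2/2^{**}_{0,\gamma}}}.
\]
The Rayleigh quotient is invariant under scalar multiplication and under the dilations $u(x)\mapsto\lambda^{(N-4-\gamma)/2}u(\lambda x)$, but (crucially) carries no translation symmetry since the weights concentrate at the origin. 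A weak weighted Rellich-Sobolev bound gives $\widetilde{\mathcal{S}}_{N,\gamma}>0$.

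The central step is to produce a minimizer. I would take a nonnegative minimizing sequence $\{u_n\}$ with $\int|x|^\gamma u_n^{2^{**}_{0,\gamma}}\,\mathrm{d}x=1$, pre-normalize each term by a dilation so that a fixed ball carries a fixed fraction of the mass, and apply P.-L.~Lions' concentration-compactness principle to the weighted measures $|\Delta u_n|^2|x|^{-\gamma}\,\mathrm{d}x$ and $|x|^\gamma u_n^{2^{**}_{0,\gamma}}\,\mathrm{d}x$. Vanishing is ruled out by the chosen normalization together with local weighted Sobolev embedding; dichotomy is ruled out by the strict subadditivity $\widetilde{\mathcal{S}}_{N,\gamma}<\widetilde{\mathcal{S}}_{N,\gamma}\bigl(\alpha^{2/2^{**}_{0,\gamma}}+(1-\alpha)^{2/2^{**}_{0,\gamma}}\bigr)$ for $\alpha\in(0,1)$. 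A weakly convergent subsequence then converges strongly in the relevant weighted Lebesgue space to a nonnegative extremizer $u_*\not\equiv0$. Elliptic bootstrap gives $u_*\in C^\infty(\mathbb{R}^N\setminus\{0\})$, and after absorbing the Lagrange multiplier into a dilation, $u_*$ solves \eqref{lehe}. Theorem \ref{thmlehups} then forces $u_*=U_\lambda$ for some $\lambda>0$, and by the invariance of the quotient the whole family $\{AU_\lambda:A\in\mathbb{R},\lambda>0\}$ attains equality.

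The sharp constant is computed via the key identity obtained by testing \eqref{lehe} at $u=U$ against $U$ itself: integration by parts gives $\int|\Delta U|^2|x|^{-\gamma}\,\mathrm{d}x=\int|x|^\gamma U^{2^{**}_{0,\gamma}}\,\mathrm{d}x$, so that
\[
\mathcal{S}_{N,\gamma}=\Bigl(\int_{\mathbb{R}^N}|x|^\gamma U^{2^{**}_{0,\gamma}}\,\mathrm{d}x\Bigr)^{2(2+\gamma)/(N+\gamma)}.
\]
In polar coordinates with the substitution $t=|x|^{2+\gamma}$, the remaining integral reduces to a Beta function $B(M/2,M/2)=\Gamma^2(M/2)/\Gamma(M)$, where $M=2(N+\gamma)/(2+\gamma)$. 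Rewriting the constant $C_{N,\gamma}$ via the identities $N-4-\gamma=\tfrac{2+\gamma}{2}(M-4)$, $N-2=\tfrac{2+\gamma}{2}(M-2)$, $N+\gamma=\tfrac{2+\gamma}{2}M$, $N+2+2\gamma=\tfrac{2+\gamma}{2}(M+2)$ and simplifying yields precisely \eqref{defsclehi}.

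The main obstacle is the existence step: the dilation symmetry is a genuine source of noncompactness and one must verify the strict subadditivity above carefully in this weighted framework. The absence of translation invariance is a welcome simplification not present in the unweighted Rellich-Sobolev problem, so once the scaling loss of compactness is absorbed by the initial renormalization of the minimizing sequence, the concentration-compactness machinery closes and the classification in Theorem \ref{thmlehups} supplies both the form of the extremizer and, through the self-similarity identity, the explicit value of $\mathcal{S}_{N,\gamma}$.
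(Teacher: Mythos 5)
Your route differs from the paper's in the existence step: the paper never runs concentration-compactness, it simply quotes the attainment of the best constant from \cite{MS14} (see also \cite{CM11}), identifies extremals with least-energy solutions of the Euler--Lagrange equation \eqref{lehele}, argues these cannot change sign, and then invokes Theorem \ref{thmlehups}; your closing computation of $\mathcal{S}_{N,\gamma}$ by testing with $U$ (Beta-function reduction, rewriting $C_{N,\gamma}$ in terms of $M=\tfrac{2(N+\gamma)}{2+\gamma}$) is the same as the paper's and is correct. The first genuine gap is your opening move ``take a nonnegative minimizing sequence.'' Unlike the first-order case, where $|\nabla|u||=|\nabla u|$ a.e., there is no pointwise or integral inequality comparing $\Delta|u|$ with $\Delta u$, and $|u|$ need not even lie in $\mathcal{D}^{2,2}_{0,\gamma}(\mathbb{R}^N)$; so replacing $u_n$ by $|u_n|$ is not available, and nonnegativity of the minimizer has to be earned. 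A standard repair is to replace $u$ by the potential $v$ with $-\Delta v=|\Delta u|$ (so $v\ge|u|$ by the maximum principle while the numerator is unchanged), or, as the paper does, to argue at the level of least-energy solutions; as written, your proposal silently assumes exactly the point that is delicate for fourth-order problems.

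The second gap concerns the ``only if'' half of the equality statement. Concentration-compactness (even granting the vanishing/dichotomy analysis you sketch, which is plausible here since concentration away from the origin is subcritical and dilations are absorbed by your normalization) produces \emph{one} extremal, and Theorem \ref{thmlehups} identifies it as some $U_\lambda$; but the theorem claims that \emph{every} function attaining equality is $AU_\lambda$. An arbitrary extremal is a priori sign-changing, so Theorem \ref{thmlehups} does not apply to it directly; you must first show that any extremal is, up to a constant, nonnegative. This is precisely the step the paper devotes an argument to (extremals are least-energy solutions up to scaling, and least-energy solutions cannot change sign) before using uniqueness, and it is absent from your proposal. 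A minor further point: Theorem \ref{thmlehups} is stated for smooth solutions, while your bootstrap only gives $C^\infty(\mathbb{R}^N\setminus\{0\})$, so the behaviour at the origin (or the applicability of the classification to that solution class) should be addressed.
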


\subsection{Caffarelli-Kohn-Nirenberg inequality}\label{subsectsckni}

    We notice that inequality \eqref{lehi} is a standard second-order Caffarelli-Kohn-Nirenberg (we write CKN for short) inequality. Let us recall the famous CKN inequality which was firstly introduced in 1984 by Caffarelli, Kohn and Nirenberg in their celebrated work \cite{CKN84} of first-order, then it was extended to higher-order by Lin \cite{Li86}. Here, we only mention the case without interpolation term.
    \vskip0.25cm

    \begin{proposition}\label{prockni}
    ({\bfseries CKN inequality}) There exists a constant $C>0$ such that
    \begin{equation}\label{cknh}
    \||x|^{-a} D^m u\|_{L^p(\mathbb{R}^N)}\geq C \||x|^{-b} D^j u\|_{L^\tau(\mathbb{R}^N)},
    \quad \mbox{for all}\quad u\in C^\infty_0(\mathbb{R}^N),
    \end{equation}
    where $j\geq 0$, $m>0$ are integers, and
    \begin{align*}
    \frac{1}{p}-\frac{a}{N}>0,\quad \frac{1}{\tau}-\frac{b}{N}>0, \quad
    a\leq b\leq a+m-j, \quad
    \frac{1}{\tau}-\frac{b+j}{N}=\frac{1}{p}-\frac{a+m}{N}.
    \end{align*}
    Here
    \[
    |D^m u|:=\left(\sum_{|\mathbf{m}|=m}\left|\frac{\partial^{|\mathbf{m}|} u}{\partial x_1^{m_1}\cdots \partial x_N^{m_N}}\right|^2\right)^{1/2},\quad \mathbf{m}=(m_1,\ldots,m_N)\in \mathbb{N}^N, \quad \mathbb{N}:=\{0,1,2,\ldots\},
    \]
    thus,
    \begin{align}\label{defDuz}
    \||x|^{-a} D^m u\|^p_{L^p(\mathbb{R}^N)}=
    \int_{\mathbb{R}^N}|x|^{-ap}
    \left(\sum_{|\mathbf{m}|=m}\left|\frac{\partial^{|\mathbf{m}|} u}{\partial x_1^{m_1}\cdots \partial x_N^{m_N}}\right|^2\right)^{p/2}
    \mathrm{d}x.
    \end{align}
    In particular, when $j=0$, $m=1$ and $p=2$, it holds
    \begin{equation}\label{ckn1y}
    \int_{\mathbb{R}^N}|x|^{-2a}|D u|^2 \mathrm{d}x
    \geq C\left(\int_{\mathbb{R}^N}|x|^{-b\tau}|u|^\tau \mathrm{d}x\right)^{\frac{2}{\tau}},
    \quad \forall u\in C^\infty_0(\mathbb{R}^N),
    \end{equation}
    where $-\infty<a<a_c:=\frac{N-2}{2}$, $a\leq b\leq a+1$, $\tau=\frac{2N}{N-2(1+a-b)}$. Furthermore, when $j=0$, $m=2$ and $p=2$, it holds
    \begin{equation}\label{ckn2y}
    \int_{\mathbb{R}^N}|x|^{-2a}|D^2 u|^2 \mathrm{d}x
    \geq C\left(\int_{\mathbb{R}^N}|x|^{-b\tau}|u|^\tau \mathrm{d}x\right)^{\frac{2}{\tau}},
    \quad \forall u\in C^\infty_0(\mathbb{R}^N),
    \end{equation}
    where $-\infty<a<\frac{N-4}{2}$, $a\leq b\leq a+2$, $\tau=\frac{2N}{N-2(2+a-b)}$.
    \end{proposition}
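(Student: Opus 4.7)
The plan is to deduce the Caffarelli--Kohn--Nirenberg inequality \eqref{cknh} from the Stein--Weiss inequality (Proposition~\ref{proswi}) via the Riesz potential representation of lower-order derivatives by higher-order ones. For $u\in C^\infty_0(\mathbb{R}^N)$ and any multi-index $\alpha$ with $|\alpha|=j$, iterating the fundamental solution of the Laplacian gives
\[
\partial^\alpha u(x)=\int_{\mathbb{R}^N} K_\alpha(x-y)\,D^m u(y)\,dy,
\]
where $K_\alpha$ is smooth off the origin and satisfies $|K_\alpha(z)|\leq C|z|^{-(N-(m-j))}$; when $m-j$ is even one takes $K_\alpha$ to be a suitable derivative of $|x|^{m-j-N}$, and when $m-j$ is odd the kernel factors as the Riesz potential of order $m-j$ composed with a (bounded) Riesz transform on $L^p$. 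In either case we obtain the pointwise domination
\[
|D^j u(x)|\leq C\int_{\mathbb{R}^N}\frac{|D^m u(y)|}{|x-y|^{N-(m-j)}}\,dy.
\]

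Substituting this into the right-hand side of \eqref{cknh} and setting $g(y):=|y|^{-a}|D^m u(y)|$, so that $\|g\|_{L^p}=\||x|^{-a}D^m u\|_{L^p}$, it suffices to prove the weighted potential estimate
\[
\Bigl\||x|^{-b}\int_{\mathbb{R}^N}\frac{|y|^{a}\,g(y)}{|x-y|^{N-(m-j)}}\,dy\Bigr\|_{L^\tau(\mathbb{R}^N)}\leq C\,\|g\|_{L^p(\mathbb{R}^N)}.
\]
By duality this is equivalent to the bilinear bound
\[
\int_{\mathbb{R}^N}\!\!\int_{\mathbb{R}^N}\frac{f(x)\,g(y)}{|x|^{b}\,|x-y|^{N-(m-j)}\,|y|^{-a}}\,dy\,dx\leq C\,\|f\|_{L^{\tau'}}\|g\|_{L^p},
\]
valid for all nonnegative $f\in L^{\tau'}$, $g\in L^p$. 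This is precisely the Stein--Weiss inequality \eqref{swi} with the identifications $(r,t)=(\tau',p)$, weights $(b,-a)$, and convolution exponent $\lambda=N-(m-j)\in(0,N)$.

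It then remains to check that the CKN hypotheses match the Stein--Weiss hypotheses. A direct rearrangement shows the scaling identity $\tfrac{1}{\tau}-\tfrac{b+j}{N}=\tfrac{1}{p}-\tfrac{a+m}{N}$ is equivalent to $\tfrac{1}{\tau'}+\tfrac{1}{p}+\tfrac{\lambda+b-a}{N}=2$; the assumption $a\leq b$ becomes the nonnegativity $b+(-a)\geq 0$; and the open conditions $\tfrac{1}{p}-\tfrac{a}{N}>0$, $\tfrac{1}{\tau}-\tfrac{b}{N}>0$, together with $b\leq a+m-j$, supply the strict weight inequalities of Proposition~\ref{proswi} (and its symmetric variant obtained by swapping $f\leftrightarrow g$). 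The chief technical obstacle lies in the representation step: the pointwise domination by a Riesz potential is routine when $m-j$ is even, but for odd $m-j$ one must first absorb the Riesz-transform factor using its $L^p$-boundedness, which is precisely why $p>1$ is needed in \eqref{cknh}; the degenerate case $m=j$ is immediate, and the special instances \eqref{ckn1y}--\eqref{ckn2y} with $p=2$ and $j=0$ follow at once.
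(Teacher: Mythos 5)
This proposition is quoted in the paper from \cite{CKN84,Li86} without proof, so your proposal must stand on its own, and it has a genuine gap: the reduction to the Stein--Weiss inequality cannot cover the stated parameter range. When you match hypotheses, the two strict weight conditions of Proposition \ref{proswi} applied to your bilinear form (weights $b$ on the $x$-variable with exponent $r=\tau'$, and $-a$ on the $y$-variable with exponent $t=p$) translate into $\frac{b}{N}<\frac{1}{\tau}$ \emph{and} $-a<\frac{N}{p'}$, i.e.\ $\frac{1}{p}-\frac{a}{N}<1$; you only checked the first. The second is not implied by the CKN hypotheses, which impose no lower bound on $a$ (e.g.\ $-\infty<a<a_c$ in \eqref{ckn1y}), and it is genuinely necessary for your intermediate step: if $a\le -N/p'$ one can choose $g\ge 0$ in $L^p$ concentrated near the origin with $\int_{B_1}|y|^{a}g(y)\,\mathrm{d}y=\infty$, so the weighted potential estimate
$\bigl\||x|^{-b}\int|x-y|^{-(N-(m-j))}|y|^{a}g(y)\,\mathrm{d}y\bigr\|_{L^\tau}\le C\|g\|_{L^p}$
is false, even though the CKN inequality still holds there (the specific $g=|y|^{-a}|D^mu|$ is not an arbitrary $L^p$ function, and passing to ``all $g\in L^p$'' loses exactly this information). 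So the argument proves \eqref{cknh} only for $-N/p'<a<N/p$, not for the full range claimed.

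Two further points in the representation step also restrict the range. For odd $m-j$ you cannot simply ``absorb the Riesz transform by its $L^p$-boundedness'': the transform sits inside the weight, so you need its boundedness on $L^p(|x|^{-ap}\,\mathrm{d}x)$, which holds only under the Muckenhoupt condition $-N<-ap<N(p-1)$ --- again a lower bound on $a$ absent from the proposition (and there is no pointwise domination $|D^ju|\lesssim I_{m-j}(|D^mu|)$ in the odd case, so the dualization has to be set up with the singular integral kept explicit). Finally, the kernel bound requires $0<m-j<N$, which the hypotheses of the proposition do not guarantee in general. The classical proofs in \cite{CKN84,Li86} avoid all of this by working directly with integration by parts, dyadic/interpolation arguments and induction on the order of derivatives, which is why they reach arbitrarily negative $a$; if you want to keep the potential-theoretic route, you must either restrict the statement to $-N/p'<a<N/p$ or add an argument (e.g.\ the change of variable $u(x)=|x|^{-a}v(x)$ or a scaling/decomposition in dyadic annuli) that handles the strongly negative weights.
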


    Since $|Du|^2=|\nabla u|^2$, then we rewrite \eqref{ckn1y} as the following classical first-order CKN inequality
    \begin{equation}\label{ckn1}
    \int_{\mathbb{R}^N}|x|^{-2a}|\nabla u|^2 \mathrm{d}x
    \geq C_{\mathrm{CKN1}}\left(\int_{\mathbb{R}^N}|x|^{-b\tau}|u|^\tau \mathrm{d}x\right)^{\frac{2}{\tau}},
    \quad \forall u\in C^\infty_0(\mathbb{R}^N),
    \end{equation}
    which was well studied in last thirty years. When $N\geq 3$, $0\leq a<a_c$ and $a\leq b<a+1$, Chou and Chu \cite{CC93} obtained $C_{\mathrm{CKN1}}$ in \eqref{ckn1} is achieved by explicit radial function using the moving planes method about the uniqueness of nonnegative solutions for related Euler-Lagrange equation. Horiuchi \cite{Ho97} obtained the same conclusion by using symmetrization method for more general $L^p$-case (see also
    \cite{LL17}), which states that for $v(x)=\rho^{-\frac{1}{\tau}}
    u(|x|^{\rho-1}x)$ with $\rho=\frac{N-2}{N-2-2a}$,
    \begin{align}\label{fockncv}
    &\int_{\mathbb{R}^N}|x|^{-b\tau}|u|^\tau \mathrm{d}x
    =\int_{\mathbb{R}^N}|x|^{\frac{(N-2)\tau}{2}-N}|v|^\tau \mathrm{d}x,
    \nonumber\\
    &\int_{\mathbb{R}^N}|x|^{-2a}|\nabla u|^2\mathrm{d}x
    \geq \rho^{-1-\frac{2}{\tau}}\int_{\mathbb{R}^N}|\nabla v|^2 \mathrm{d}x,
    \end{align}
    and the second equality holds if and only if $u$ is radial when $0<a<a_c$, then the sharp constant and extremal functions are obtained from classical Sobolev inequality \cite{Ta76} when $b=a$ and Hardy-Sobolev inequality \cite[Theorem 4.3]{Lieb83} when $a<b<a+1$ which implies $-2<\frac{(N-2)\tau}{2}-N<0$. Furthermore, Dolbeault et al. \cite{DELT09} proved this by using a simpler proof, that is, making the change $v(x)=|x|^{-a}u(x)$ which transforms \eqref{ckn1} into
    \begin{equation*}
    \int_{\mathbb{R}^N}|\nabla v|^2 \mathrm{d}x
    \geq C_{\mathrm{CKN1}}\left(\int_{\mathbb{R}^N}
    \frac{|v|^\tau}{|x|^{(b-a)\tau}} \mathrm{d}x\right)^{\frac{2}{\tau}}
    +a[(N-2)-a]\int_{\mathbb{R}^N}\frac{|v|^2}{|x|^2}\mathrm{d}x,
    \end{equation*}
    then result follows from Schwarz's symmetrization if $0\leq a<a_c$. When $b=a+1$ or $a<0$ and $b=a$, Catrina and Wang \cite{CW01} proved that $C_{\mathrm{CKN1}}$ is not achieved and for other cases it is always achieved. Furthermore, when $a<0$ and $a< b<b_{\mathrm{FS}}(a)$, where
    \begin{align}\label{deffsc}
    b_{\mathrm{FS}}(a):=\frac{N(a_c-a)}{2\sqrt{(a_c-a)^2+N-1}}
    +a-a_c,
    \end{align}
    Felli and Schneider \cite{FS03} proved the extremal function is non-radial by restricting it in the radial space and classifying linearized problem, thus $b_{\mathrm{FS}}(a)$ is usually called {\em Felli-Schneider curve}. Finally, in a celebrated paper, Dolbeault, Esteban and Loss \cite{DEL16} proved an optimal rigidity result by using the so-called {\em carr\'{e} du champ} method that when $a<0$ and $b_{\mathrm{FS}}(a)\leq b<a+1$, the extremal function is symmetry. See previous results as in Figure \ref{F1}. We also refer to \cite{BDMN17,DELM17} for the symmetry and symmetry breaking results about extremal functions of CKN inequality with interpolation term.

    \begin{figure}
\begin{tikzpicture}[scale=4]
		\draw[->,ultra thick](-2,0)--(0,0)node[below right]{$O$}--(0.6,0)node[below]{$a$};
		\draw[->,ultra thick](0,-0.6)--(0,0.8)node[left]{$b$};
        \draw[fill=gray,domain=0:-2]plot(\x,{4*(1-0.47*\x)/
        (2*((1-0.6*\x)^2+3)^0.5)
        +0.47*\x -1})--(-2,-0.6)--(-1.2,-0.6)--(0,0);
        \draw[fill=brown,domain=0:-2]plot(\x,{4*(1-0.47*\x)/
        (2*((1-0.6*\x)^2+3)^0.5)
        +0.47*\x -1})--(-2,-0.5)--(0.5,0.75)--(0.5,0.25)--(0,0);
        \draw[densely dashed](0.5,0)node[below]{$a_c$}--(0.5,0.75);
        \draw[densely dashed](-2,-0.5)--(0.5,0.75);
        \draw[densely dashed](-1.2,-0.6)--(0,0);
        \draw[-,ultra thick](0,-0.6)--(0,0.8);
        \draw[-,ultra thick](-2,0)--(0.6,0);

		\node[left] at(-0.03,0.5){$b=a+1$};
        \node[right] at (-0.4,-0.22){$b=a$};
        \node[right] at (-1.7,-0.48){$b=b_{\mathrm{FS}}(a)$};
        \node[right] at (-0.98,-0.51){$Symmetry\ breaking\ region$};
        \node[right] at (-1.32,0.3){$Symmetry\ region$};
\end{tikzpicture}
\caption{\small On extremal functions of the first-order CKN inequality \eqref{ckn1}. The {\em Felli-Schneider region}, or symmetry breaking region, appears in dark grey defined by $a<0$ and $a<b<b_{\mathrm{FS}}(a)$. And symmetry holds in the brown region defined by $a<0$ and $ b_{\mathrm{FS}}(a)\leq b<a+1$, also $0\leq a<a_c$ and $a\leq b<a+1$.}
\label{F1}
\end{figure}
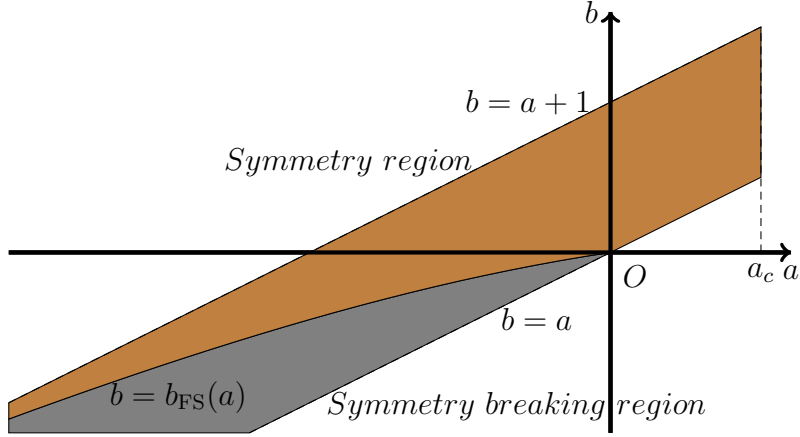

    However, the second-order case \eqref{ckn2y} is complicated. After integration by parts we get $\||x|^{-a}D^2 u\|_{L^2(\mathbb{R}^N)}=\||x|^{-a}\Delta u\|_{L^2(\mathbb{R}^N)}$ unless $a=0$. When $N\geq 5$ and $0\leq a<\frac{N-4}{2}$, Szulkin and Waliullah \cite[Lemma 3.1]{SW12} proved that $\||x|^{-a}D^2 u\|_{L^2(\mathbb{R}^N)}$ is equivalent to $\||x|^{-a}\Delta u\|_{L^2(\mathbb{R}^N)}$. For general $a$, Caldiroli and Musina \cite[Remark 2.3]{CM11} proved that $\||x|^{-a}D^2 u\|_{L^2(\mathbb{R}^N)}$ is equivalent to $\||x|^{-a}\Delta u\|_{L^2(\mathbb{R}^N)}$ if and only if
 \begin{align}\label{defac}
 a\neq -\frac{N}{2}-k\quad\mbox{and}\quad a\neq \frac{N-4}{2}+k, \quad\mbox{for all}\quad k\in\mathbb{N}.
 \end{align}
 Therefore, with addition condition \eqref{defac}, we have that \eqref{ckn2y} is equivalent to the following second-order CKN inequality
    \begin{equation}\label{ckn2}
    \int_{\mathbb{R}^N}|x|^{-2a}|\Delta u|^2 \mathrm{d}x
    \geq C_{\mathrm{CKN2}}\left(\int_{\mathbb{R}^N}|x|^{-b\tau}|u|^\tau \mathrm{d}x\right)^{\frac{2}{\tau}},
    \quad \forall u\in C^\infty_0(\mathbb{R}^N),
    \end{equation}
    which has it's own interests.
    Furthermore, Caldiroli and Musina \cite[Remark 2.3]{CM11} also showed that \eqref{ckn2} does not hold if $a=-\frac{N}{2}-k$ or $\frac{N-4}{2}+k$ for some $k\in\mathbb{N}$, which comes from spectral analysis about the weighted Rellich inequality (see \cite{CM12,MNSS21}): for all $u\in C^\infty_0(\mathbb{R}^N)$,
    \begin{align}\label{wri}
    \int_{\mathbb{R}^N}|x|^{-2a}|\Delta u|^2 \mathrm{d}x
    \geq \inf_{k\in\mathbb{N}}
    \left(k+\frac{N}{2}+a\right)^2\left(k+\frac{N-4}{2}-a\right)^2
    \int_{\mathbb{R}^N}|x|^{-2a-4}|u|^2 \mathrm{d}x.
    \end{align}
    Obviously, our result Theorem \ref{corolehups} is a special case of the second-order CKN inequality \eqref{ckn2}. Now, let us focus on \eqref{ckn2} under condition \eqref{defac}. There are only some partial results about the existence of extremal functions, sharp constants, symmetry or symmetry breaking phenomenon: when $b=a+2$, then \eqref{ckn2} reduces into Rellich type inequality, Caldiroli and Musina \cite{CM12} obtained the explicit form of sharp constant when $b=a+2$ and it is not achieved (see previous results
    \cite{DH98,Mi00} when $-\frac{N}{2}< a<\frac{N-4}{2}$ and $N\geq 3$). While the authors \cite{CM11} proved when $a<b<a+2$ or $a=b$ with energy assumption, the best constant is always achieved in cones, see also
    \cite[Theorem A.2]{MS14}. Furthermore, Szulkin and Waliullah \cite{SW12} proved when $a=b>0$ the best constant is achieved.
    Caldiroli and Cora \cite{CC16} obtained a partial symmetry breaking result when the parameter of pure Rellich term is sufficiently large, we refer to \cite{CM11} in cones, and also \cite{Ya21} with Hardy and Rellich terms. However, as mentioned previous, there are no optimal results about symmetry or symmetry breaking phenomenon.

    Recently, in \cite{DT23-jfa} we obtained the sharp constant and extremal functions of \eqref{ckn2} restricted in radial space, see also \cite{DGT23-jde} jointed work with Grossi. Furthermore, in \cite{DT24}, by showing the equivalent relationship
    \[
    \int_{\mathbb{R}^N} |\mathrm{div}(|x|^\alpha\nabla u)|^2 \mathrm{d}x\sim\int_{\mathbb{R}^N} |x|^{2\alpha}|\Delta u|^2 \mathrm{d}x,\quad \forall u\in C^\infty_0(\mathbb{R}^N),
    \]
    when $N\geq 5$ and $0<\alpha<2$ (this follows from \cite{GG22} which deals with in bounded domain), then we established a second-order weighted inequality
    \[
    \int_{\mathbb{R}^N} |\mathrm{div}(|x|^\alpha\nabla u)|^2 \mathrm{d}x \geq C \left(\int_{\mathbb{R}^N}|u|^{\frac{2N}{N-4+2\alpha}} \mathrm{d}x\right)^{\frac{N-4+2\alpha}{N}},\quad\forall u\in C^\infty_0(\mathbb{R}^N).
    \]
    Moreover, in \cite{DT23-f}, by showing the following equivalent relationship with two variables
    \[
    \int_{\mathbb{R}^N} |x|^{-\beta}|\mathrm{div}(|x|^\alpha\nabla u)|^2 \mathrm{d}x\sim\int_{\mathbb{R}^N} |x|^{2\alpha-\beta}|\Delta u|^2 \mathrm{d}x,\quad \forall u\in C^\infty_0(\mathbb{R}^N),
    \]
    when $N\geq 5$, $\alpha>2-N$ and $\alpha-2\leq \beta\leq \frac{N}{N-2}\alpha$ with also the condition \eqref{defac} by taking $2\alpha-\beta=-2a$ (in fact, we only need require $2\alpha-\beta\neq N+2k$ for all $k\in\mathbb{N}$ since in this case $\frac{2\alpha-\beta}{-2}<\frac{N-4}{2}$), we established a new type second-order CKN inequality
    \begin{equation}\label{cknrs}
    \int_{\mathbb{R}^N}|x|^{-\beta}|\mathrm{div} (|x|^{\alpha}\nabla u)|^2 \mathrm{d}x
    \geq \mathcal{S}_{N,\alpha,\beta}\left(\int_{\mathbb{R}^N}
    |x|^{\beta}|u|^{2^{**}_{\alpha,\beta}} \mathrm{d}x\right)^{\frac{2}{2^{**}_{\alpha,\beta}}},\quad \forall u\in \mathcal{D}^{2,2}_{\alpha,\beta}(\mathbb{R}^N),
    \end{equation}
    where $2^{**}_{\alpha,\beta}:=\frac{2(N+\beta)}{N-4+2\alpha-\beta}$, and $\mathcal{D}^{2,2}_{\alpha,\beta}(\mathbb{R}^N)$ denotes the completion of $C^\infty_0(\mathbb{R}^N)$ with respect to the norm
    \begin{equation*}
    \|u\|_{\mathcal{D}^{2,2}_{\alpha,\beta}(\mathbb{R}^N)}
    =\left(\int_{\mathbb{R}^N}|x|^{-\beta}|\mathrm{div} (|x|^{\alpha}\nabla u)|^2 \mathrm{d}x\right)^{\frac{1}{2}}.
    \end{equation*}
    In fact, \eqref{cknrs} holds for all $N\geq 5$, $\alpha>2-N$ and $\alpha-2\leq \beta\leq \frac{N}{N-2}\alpha$ which does not need the additional condition \eqref{defac} with $2\alpha-\beta=-2a$, and this is different from the work of Caldiroli and Musina
    \cite[Remark 2.3]{CM11}, see Proposition \ref{propgeqd}.

    Classifying solutions to the linearized problem at extremal functions restricted in radial space as Felli and Schneider \cite{FS03}, we obtained a symmetry breaking conclusion in \cite{DT23-f}: no extremal functions of sharp constant $\mathcal{S}_{N,\alpha,\beta}$ in \eqref{cknrs} are radially symmetry if $\alpha>0$ and $\beta_{\mathrm{FS}}(\alpha)<\beta< \frac{N}{N-2}\alpha$, where $\beta_{\mathrm{FS}}(\alpha):=-N+\sqrt{N^2+\alpha^2+2(N-2)\alpha}$. Moreover, we obtained a symmetry result of  extremal functions when $\beta= \frac{N}{N-2}\alpha$ and $\alpha<0$.
    It is worth mentioning that the Euler-Lagrange equation of the new type second-order CKN inequality \eqref{cknrs} is
    \begin{equation*}
    \mathrm{div}(|x|^{\alpha}\nabla(|x|^{-\beta}
    \mathrm{div}(|x|^\alpha\nabla u)))=|x|^\beta|u|^{2^{**}_{\alpha,\beta}-2}u \quad \mbox{in}\quad \mathbb{R}^N,
    \end{equation*}
    which is equivalent to a special weighted Lane-Emden system
    \begin{eqnarray*}
    \left\{ \arraycolsep=1.5pt
       \begin{array}{ll}
        -\mathrm{div}(|x|^\alpha\nabla u)=|x|^{\beta}v\quad \mbox{in}\  \mathbb{R}^N,\\[2mm]
        -\mathrm{div}(|x|^{\alpha}\nabla v)=|x|^\beta|u|^{2^{**}_{\alpha,\beta}-2}u\quad \mbox{in}\  \mathbb{R}^N.
        \end{array}
    \right.
    \end{eqnarray*}
    This is similar to the Euler-Lagrange equation of first-order CKN inequality \eqref{ckn1}, which makes it clear to understand the relationship between the curves $\beta_{\mathrm{FS}}(\alpha)$ and {\em Felli-Schneider curve} $b_{\mathrm{FS}}(a)$ in \eqref{deffsc}. In fact, the curve $\beta_{\mathrm{FS}}(\alpha)$ is nothing else, it is just $b_{\mathrm{FS}}(a)$ by taking $\alpha=-2a$ and $\beta=-b\tau$ with $\tau=\frac{2N}{N-2(1+a-b)}$, moreover, the conditions $a<\frac{N-2}{2}$ and $a\leq b\leq a+1$ imply $\alpha>2-N$ and $\alpha-2\leq \beta\leq \frac{N}{N-2}\alpha$.

    Therefore, it is natural to consider the symmetry region of  extremal functions for \eqref{cknrs}. In fact, with the help of our previous result Theorem \ref{corolehups}, we obtain the symmetry region: $(\alpha,\beta)\in (2-N,0]\times (\alpha-2, \frac{N}{N-2}\alpha]$. The first step is for us to establish the following weighted Rellich-Sobolev inequality, which extends the work of Dan et al. \cite{DMY20} to weighted one.

    \begin{theorem}\label{thmwrsi}
    Assume that $N\geq 5$, $-2<\gamma\leq 0$ and $\gamma<\mu<N-4$. For all $u\in \mathcal{D}^{2,2}_{0,\gamma}(\mathbb{R}^N)$,
    \begin{align}\label{wrsi}
    & \int_{\mathbb{R}^N}\frac{|\Delta u|^2}{|x|^{\gamma}} \mathrm{d}x
    -C_{\gamma,\mu,1}\int_{\mathbb{R}^N}\frac{|\nabla u|^2}{|x|^{\gamma+2}} \mathrm{d}x
    +C_{\gamma,\mu,2}\int_{\mathbb{R}^N}\frac{|u|^2}{|x|^{\gamma+4}} \mathrm{d}x
    \nonumber
    \\
    & \geq \left(\frac{N-4-\mu}{N-4-\gamma}\right)^{3+\frac{2}{2^{**}_{0,\gamma}}}
    \mathcal{S}_{N,\gamma}
    \left(\int_{\mathbb{R}^N}
    |x|^{\gamma}|u|^{2^{**}_{0,\gamma}} \mathrm{d}x\right)^{\frac{2}{2^{**}_{0,\gamma}}},
    \end{align}
    where $2^{**}_{0,\gamma}:=\frac{2(N+\gamma)}{N-4-\gamma}$, $\mathcal{S}_{N,\gamma}$ is given as in \eqref{defsclehi} and
    \begin{align}\label{RSisc}
    C_{\gamma,\mu,1}& =\frac{N^2-4N+8+\gamma^2+4\gamma}{2(N-4-\gamma)^2}(\mu-\gamma)
    [2(N-4-\gamma)-(\mu-\gamma)];
    \nonumber\\
    C_{\gamma,\mu,2}& =\frac{N^2-4N+8+\gamma^2+4\gamma}{8(N-4-\gamma)^2}(\mu-\gamma)^2
    [2(N-4-\gamma)-(\mu-\gamma)]^2
    \nonumber\\
    &\quad-\frac{(\mu-\gamma)^2[2(N-2)-(\mu-\gamma)]^2}{16}
    \nonumber\\
    &\quad-\frac{(\mu-\gamma)[2(N-2)-(\mu-\gamma)]
    (N-4-\mu)(2+\gamma)}{4}.
    \end{align}
    Moreover, $\left(\frac{N-4-\mu}{N-4-\gamma}\right)^{3+\frac{2}{2^{**}_{0,\gamma}}}
    \mathcal{S}_{N,\gamma}$ is sharp and equality in \eqref{wrsi} holds if and only if
    \begin{align*}
    u(x)=A|x|^{-\frac{\mu-\gamma}{2}}
    \left(\lambda+|x|^{\frac{(2+\gamma)(N-4-\mu)}{N-4-\gamma}}
    \right)^{-{\frac{N-4-\gamma}{2+\gamma}}},
    \end{align*}
    for $A\in\mathbb{R}$ and $\lambda>0$.
    \end{theorem}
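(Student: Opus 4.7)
The plan is to reduce \eqref{wrsi} to the sharp Rellich-Sobolev inequality Theorem \ref{corolehups} by a two-step change of variables, and to control non-radial perturbations via the standard spherical harmonic decomposition. The shape of the asserted extremal motivates the substitution
\[
u(x) = |x|^{-(\mu-\gamma)/2}\,v(\phi(x)),\qquad \phi(x) = |x|^{\rho-1}x,\quad \rho = \frac{N-4-\mu}{N-4-\gamma}\in (0,1],
\]
under which the extremal in the statement corresponds precisely to $v(y) = C(\lambda+|y|^{2+\gamma})^{-(N-4-\gamma)/(2+\gamma)} = U_\lambda(y)$ from Theorem \ref{thmlehups}. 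I would first restrict to radial $u$ and carry out the substitution in detail.

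Using $\Delta|x|^{-s} = s(s+2-N)|x|^{-s-2}$ together with the chain rule, one expands $\Delta u$ into a principal term proportional to $(\Delta v)\circ\phi$, a gradient term proportional to $(\nabla v)\circ\phi$, a scalar term proportional to $v\circ\phi$, and cross products. Squaring, multiplying by $|x|^{-\gamma}$, and integrating by parts repeatedly, each cross term consolidates into a multiple of $\int|\nabla u|^2/|x|^{\gamma+2}\,dx$ or $\int|u|^2/|x|^{\gamma+4}\,dx$. The coefficients $C_{\gamma,\mu,1}$ and $C_{\gamma,\mu,2}$ in \eqref{RSisc} are designed to be exactly the quantities for which these Hardy- and Rellich-type remainders cancel, leaving
\[
\text{LHS of \eqref{wrsi}} \;=\; \rho^{\,3}\int_{\mathbb{R}^N}\frac{|\Delta v|^2}{|y|^{\gamma}}\,dy.
\]
A direct Jacobian computation ($dy = \rho|x|^{N(\rho-1)}dx$ with $|y| = |x|^\rho$) gives $\int|x|^\gamma|u|^{2^{**}_{0,\gamma}}dx = \rho^{-1}\int|y|^\gamma|v|^{2^{**}_{0,\gamma}}dy$. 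Applying Theorem \ref{corolehups} to $v$ then produces the sharp constant $\rho^{\,3+2/2^{**}_{0,\gamma}}\mathcal{S}_{N,\gamma}$, with equality exactly when $v = AU_\lambda$, i.e., when $u$ is of the form asserted in the theorem.

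For general $u$ I would use the spherical harmonic decomposition $u(x) = \sum_{k\geq 0} u_k(r)Y_k(\theta)$. Since $\Delta(u_kY_k) = Y_k\bigl(u_k'' + \tfrac{N-1}{r}u_k' - \tfrac{\lambda_k}{r^2}u_k\bigr)$ with $\lambda_k = k(k+N-2)$, each integral on the left-hand side of \eqref{wrsi} is additive in $k$. Running the Step 1 substitution mode by mode, the centrifugal term $-\lambda_k/r^2$ contributes non-negatively, so the mode-wise inequality for $k\geq 1$ is strictly better than for $k = 0$; combining with a comparison of the $L^{2^{**}_{0,\gamma}}$ norm of $u$ against its zero mode completes the proof, and forces equality onto the radial sector. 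The main obstacle is the algebraic bookkeeping in the radial step: verifying that the quartic coefficients produced by squaring $\Delta u$ and integrating by parts assemble into precisely $C_{\gamma,\mu,1}$ and $C_{\gamma,\mu,2}$ from \eqref{RSisc}. A secondary subtlety is the non-additivity of the $L^{2^{**}_{0,\gamma}}$ norm across spherical modes, handled either by a convexity estimate exploiting the strict improvement at $k\geq 1$, or by a rearrangement reduction to radial functions.
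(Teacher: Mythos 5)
Your change of variables $u(x)=|x|^{-(\mu-\gamma)/2}v(|x|^{\rho-1}x)$ with $\rho=\tfrac{N-4-\mu}{N-4-\gamma}$ is exactly the substitution the paper uses, and your radial computation (the identity $\mathrm{LHS}=\rho^{3}\int|x|^{-\gamma}|\Delta v|^{2}\,\mathrm{d}x$ plus the Jacobian identity for the $L^{2^{**}_{0,\gamma}}$ term) matches Remark \ref{remkspwrsi}. The genuine gap is in your treatment of non-radial $u$. You propose to run the argument mode by mode and then close it by ``a comparison of the $L^{2^{**}_{0,\gamma}}$ norm of $u$ against its zero mode'', a convexity estimate, or rearrangement. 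None of these is viable as stated: the $L^{2^{**}_{0,\gamma}}$ norm of $u$ is not controlled by that of its radial projection (take $u$ proportional to a single harmonic $Y_k$, $k\geq1$, so the zero mode vanishes while the norm does not), a triangle-inequality summation over modes destroys the sharp constant because the left side is $\ell^2$-additive while the right side would only be $\ell^1$-subadditive, and symmetrization is not available for $\int|x|^{-\gamma}|\Delta u|^2$ combined with a \emph{subtracted} Hardy term $-C_{\gamma,\mu,1}\int|x|^{-\gamma-2}|\nabla u|^2$. The paper's resolution is structurally different at precisely this point: it proves a purely quadratic inequality for \emph{arbitrary} $u$ (Lemmas \ref{lemre} and \ref{lemtl}), namely that the left-hand side of \eqref{wrsi} dominates $\rho^{3}\int|x|^{-\gamma}|\Delta v|^{2}\,\mathrm{d}x$ with equality iff $u$ is radial, and then applies Theorem \ref{corolehups} to the transformed function $v$ \emph{as a whole} --- that theorem is already sharp over all of $\mathcal{D}^{2,2}_{0,\gamma}(\mathbb{R}^N)$, so the nonlinear term never has to be reduced to the radial sector.

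A second, smaller issue: your assertion that for $k\geq1$ ``the centrifugal term $-\lambda_k/r^2$ contributes non-negatively, so the mode-wise inequality is strictly better'' is not automatic, because the subtracted term $-C_{\gamma,\mu,1}\int|x|^{-\gamma-2}|\nabla u|^2$ also produces $c_k$-dependent contributions of the unfavorable sign. In the paper this is the content of the estimate $A_{3,k}>0$ in the proof of Lemma \ref{lemtl}, which requires the one-dimensional Hardy-type Lemma \ref{lemtle} and the specific facts $c_k=k(N-2+k)\geq N-1\geq 4$ and $-2<\gamma\leq0$; this verification (together with the exact matching of $C_{\gamma,\mu,1},C_{\gamma,\mu,2}$ in the radial identity, which you do flag) is where the real work lies, and your proposal would need it spelled out rather than inferred from the sign of the centrifugal term alone.
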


    \begin{remark}\label{remkspwrsi} \rm
    When $\gamma=0$, \eqref{wrsi} reduces into
    \cite[Theorem 1.6]{DMY20}. The key step of proving Theorem \ref{thmwrsi} is the change of variable
    \begin{align*}
    u(x)=|x|^{\vartheta}v(|x|^{\zeta-1}x),\quad \mbox{with}\quad \vartheta=\frac{\gamma-\mu}{2},\ \zeta=\frac{N-4-\mu}{N-4-\gamma}.
    \end{align*}
    By using standard spherical decomposition then we will prove
    \begin{align*}
    & \int_{\mathbb{R}^N}\frac{|\Delta u|^2}{|x|^{\gamma}} \mathrm{d}x
    -C_{\gamma,\mu,1}\int_{\mathbb{R}^N}\frac{|\nabla u|^2}{|x|^{\gamma+2}} \mathrm{d}x
    +C_{\gamma,\mu,2}\int_{\mathbb{R}^N}\frac{|u|^2}{|x|^{\gamma+4}} \mathrm{d}x
    \geq \zeta^3\int_{\mathbb{R}^N}\frac{|\Delta v|^2}{|x|^{\gamma}} \mathrm{d}x,
    \end{align*}
    and the equality holds if and only if $u$ is radially symmetry. Furthermore,
    \begin{align*}
    \int_{\mathbb{R}^N}
    |x|^{\gamma}|u|^{2^{**}_{0,\gamma}} \mathrm{d}x
    =\zeta^{-1}\int_{\mathbb{R}^N}
    |x|^{\gamma}|v|^{2^{**}_{0,\gamma}} \mathrm{d}x.
    \end{align*}
    Thus the result directly follows from Theorem \ref{corolehups}.
    \end{remark}

    Then inspired by \cite{Ho97,LL17} and also \cite{DELT09}, following the arguments as those in our recent work \cite{DT23-f}, we can establish the sharp second-order CKN type inequality \eqref{cknrs} with radially symmetry extremal functions.

    \begin{theorem}\label{thmcknrs}
    Assume that $N\geq 5$, $2-N<\alpha\leq 0$ and $\alpha-2<\beta\leq \frac{N}{N-2}\alpha$ (except for $\alpha=\beta=0$). The sharp constant $\mathcal{S}_{N,\alpha,\beta}$ in \eqref{cknrs} is defined as
    \begin{equation*}
    \mathcal{S}_{N,\alpha,\beta}:=\inf_{u\in \mathcal{D}^{2,2}_{\alpha,\beta}(\mathbb{R}^N)\setminus\{0\}}
    \frac{\int_{\mathbb{R}^N}|x|^{-\beta}|\mathrm{div} (|x|^{\alpha}\nabla u)|^2 \mathrm{d}x}{\left(\int_{\mathbb{R}^N}
    |x|^{\beta}|u|^{2^{**}_{\alpha,\beta}} \mathrm{d}x\right)^{\frac{2}{2^{**}_{\alpha,\beta}}}},
    \end{equation*}
    with the explicit form
    \begin{align}\label{bccknrs}
    \mathcal{S}_{N,\alpha,\beta}=\left(\frac{2}{2+\beta-\alpha}\right)
    ^{\frac{2(2+\beta-\alpha)}{N+\beta}-4}
    \left(\frac{2\pi^{\frac{N}{2}}}{\Gamma(\frac{N}{2})}\right)
    ^{\frac{2(2+\beta-\alpha)}{N+\beta}}
    \mathcal{B}\left(\frac{2(N+\beta)}{2+\beta-\alpha}\right),
    \end{align}
    where $\mathcal{B}$ is as in Theorem \ref{corolehups}.
    Furthermore, $\mathcal{S}_{N,\alpha,\beta}$ can be achieved if and only if by
    \begin{align*}
    u(x)=A(\lambda+|x|^{2+\beta-\alpha})
    ^{-\frac{N-4+2\alpha-\beta}{2+\beta-\alpha}},
    \end{align*}
    for $A\in\mathbb{R}\setminus\{0\}$ and $\lambda>0$.
    \end{theorem}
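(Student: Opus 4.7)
The plan is to deduce Theorem \ref{thmcknrs} from the sharp Rellich--Sobolev inequality of Theorem \ref{corolehups} by an Emden--Fowler type change of variables, exploiting the favorable sign afforded by $\alpha\le 0$. Introduce
\[
\rho:=\frac{N-2+\alpha}{N-2},\qquad \gamma:=\frac{(N-2)\beta-N\alpha}{N-2+\alpha}.
\]
A routine check shows that the hypotheses $2-N<\alpha\le 0$ and $\alpha-2<\beta\le\frac{N\alpha}{N-2}$ translate exactly into $0<\rho\le 1$ and $\gamma\in(-2,0]$; moreover $2^{**}_{\alpha,\beta}=2^{**}_{0,\gamma}$, $\rho(2+\gamma)=2+\beta-\alpha$, and $\rho(N-4-\gamma)=N-4+2\alpha-\beta$, so the critical exponents and the form of the radial extremals align between the two inequalities, which is what motivates the choice of $\rho$.

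Next, set $u(x)=v(y)$ with $y=|x|^{\rho-1}x$, so $|y|=|x|^\rho$ and $\mathrm{d}x=\rho^{-1}|y|^{N/\rho-N}\mathrm{d}y$. A direct computation yields
\[
\int_{\mathbb{R}^N}|x|^\beta|u|^{2^{**}_{\alpha,\beta}}\,\mathrm{d}x=\rho^{-1}\int_{\mathbb{R}^N}|y|^\gamma|v|^{2^{**}_{0,\gamma}}\,\mathrm{d}y.
\]
Writing $u(r,\omega)$, $v(s,\omega)$ in polar coordinates with $s=r^\rho$ and applying the chain rule gives
\[
\mathrm{div}(|x|^\alpha\nabla u)=r^{\alpha+2\rho-2}\Bigl[\rho^2\bigl(v_{ss}+(N-1)s^{-1}v_s\bigr)+s^{-2}\Delta_\omega v\Bigr].
\]
The crucial point is that the radial part of the $y$-Laplacian carries the factor $\rho^2$ while the angular part has factor $1$. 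Squaring, multiplying by $|x|^{-\beta}$, integrating and changing to $y$ produces the identity
\[
\int_{\mathbb{R}^N}|x|^{-\beta}|\mathrm{div}(|x|^\alpha\nabla u)|^2\,\mathrm{d}x=\rho^3\int_{\mathbb{R}^N}|y|^{-\gamma}|\Delta v|^2\,\mathrm{d}y+\mathcal{R}(v),
\]
where, with $\Lambda(v):=v_{ss}+(N-1)s^{-1}v_s$,
\[
\mathcal{R}(v)=(1-\rho^2)\Bigl[2\rho\int_{\mathbb{R}^N}|y|^{-\gamma-2}\Lambda(v)\Delta_\omega v\,\mathrm{d}y+\rho^{-1}(1+\rho^2)\int_{\mathbb{R}^N}|y|^{-\gamma-4}|\Delta_\omega v|^2\,\mathrm{d}y\Bigr].
\]

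Assuming $\mathcal{R}(v)\ge 0$, applying Theorem \ref{corolehups} to $v$ yields \eqref{cknrs} with sharp constant $\mathcal{S}_{N,\alpha,\beta}=\rho^{3+2/2^{**}_{0,\gamma}}\mathcal{S}_{N,\gamma}$; the algebraic identities above simplify this prefactor to the explicit form \eqref{bccknrs}. Equality forces $\mathcal{R}(v)=0$ (so $v$ is radial) together with $v=U_\lambda$ up to scaling, and pulling back through $u(x)=v(|x|^{\rho-1}x)$ recovers exactly the stated extremal family in Theorem \ref{thmcknrs}, using $\rho(2+\gamma)=2+\beta-\alpha$ and the matching exponent identity.

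The main obstacle is to verify $\mathcal{R}(v)\ge 0$. Expanding $v=\sum_{k\ge 0}v_k(s)Y_k(\omega)$ with $-\Delta_\omega Y_k=\lambda_k Y_k$, $\lambda_k=k(k+N-2)$, and integrating the middle term by parts on $(0,\infty)$ converts $\mathcal{R}$ into the sum
\[
\sum_{k\ge 1}(1-\rho^2)\lambda_k\Bigl[2\rho\int_0^\infty\! s^{N-3-\gamma}|v_{k,s}|^2\,\mathrm{d}s+\bigl(\rho(2+\gamma)(N-4-\gamma)+\rho^{-1}(1+\rho^2)\lambda_k\bigr)\!\!\int_0^\infty\! s^{N-5-\gamma}|v_k|^2\,\mathrm{d}s\Bigr].
\]
Each summand is manifestly non-negative because $0<\rho\le 1$, $\gamma\in(-2,0]$ (giving $2+\gamma>0$ and $N-4-\gamma>0$), and $\lambda_k>0$ for $k\ge 1$; the crucial sign ingredient $1-\rho^2\ge 0$, equivalent to $\alpha\le 0$, is precisely what makes the whole sum non-negative, with equality only when $v$ is radial. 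This parallels the spherical decomposition sketched in Remark \ref{remkspwrsi} for Theorem \ref{thmwrsi}, but is complicated here by the divergence-form weight $|x|^\alpha$, which treats radial and angular derivatives asymmetrically and thereby produces the cross-term in $\mathcal{R}(v)$.
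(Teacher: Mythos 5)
Your proposal is correct, and it takes the ``direct substitution'' route rather than the paper's main proof: the paper first sets $u(x)=|x|^{\eta}v(x)$ with $\eta=-\frac{\alpha(N-4+2\alpha-\beta)}{2(N-2+\alpha)}$ and then invokes the weighted Rellich--Sobolev inequality of Theorem \ref{thmwrsi} (itself proved via the Emden--Fowler change $t=r^{\zeta}$ and the spherical-decomposition Lemma \ref{lemtl}), whereas you compose these two steps into the single change $u(x)=v(|x|^{\rho-1}x)$ with $\rho=1+\frac{\alpha}{N-2}$ and control the remainder $\mathcal{R}(v)$ directly. This is precisely the alternative the paper sketches but does not carry out in Remark \ref{remkspnckn} (there $\zeta=1+\frac{\alpha}{N-2}=\rho$ and $\eta+\vartheta=0$), and your computations check out: the identity for $\mathrm{div}(|x|^{\alpha}\nabla u)$ in the $(s,\omega)$ variables, the Jacobian factor $\rho^{-1}$, the exponent identities $\rho(2+\gamma)=2+\beta-\alpha$, $\rho(N-4-\gamma)=N-4+2\alpha-\beta$, $2^{**}_{\alpha,\beta}=2^{**}_{0,\gamma}$, and the modewise expression for $\mathcal{R}$ whose coefficients $2\rho$ and $\rho(2+\gamma)(N-4-\gamma)+\rho^{-1}(1+\rho^{2})\lambda_k$ are manifestly positive, so that $1-\rho^{2}\ge 0$ (i.e.\ $\alpha\le 0$) gives $\mathcal{R}\ge 0$ with equality only for radial $v$ when $\alpha<0$. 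What each approach buys: the paper's detour produces Theorem \ref{thmwrsi} as a result of independent interest and reuses Lemma \ref{lemtl} (at the cost of the tedious verification of \eqref{vecgm1}--\eqref{vecgm2} in Remark \ref{remvecgm}), while your route is shorter and the sign of the remainder is more transparent than the paper's $A_{3,k}>0$ computation. Two small points you should make explicit: when $\alpha=0$ one has $\rho=1$, $\mathcal{R}\equiv 0$, so radial symmetry of extremals comes from the classification in Theorem \ref{corolehups} rather than from $\mathcal{R}(v)=0$; and the modewise integration by parts uses $v_k(s)=O(s^k)$ near the origin together with a density argument in $\mathcal{D}^{2,2}_{\alpha,\beta}(\mathbb{R}^N)$ (the paper glosses over the same technicalities), and one should note that the claimed extremal family indeed belongs to this space.
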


    \begin{remark}\label{remkspnckn}\rm
    It is obvious that for $N\geq 5$, $2-N<\alpha\leq 0$ and $\alpha-2<\beta\leq \frac{N}{N-2}\alpha$, condition \eqref{defac} holds by taking $2\alpha-\beta=-2a$. In fact, we will show the second-order Caffarelli-Kohn-Nirenberg type inequality \eqref{cknrs} holds which does not need the additional condition \eqref{defac} with $2\alpha-\beta=-2a$, see Proposition \ref{propgeqd}.

    When $\alpha=\beta=0$, \eqref{bccknrs} reduces into the classical second-order Sobolev inequality (see \cite{EFJ90,Li98,Li85-1,Va93}) and extremal functions with also translation invariance. When $\alpha<0$ and $\beta=\frac{N}{N-2}\alpha$, \eqref{bccknrs} reduces into our recent work \cite[Theorem 1.6]{DT23-f}.
    The key step of proving Theorem \ref{thmcknrs} is the change of variable
    \begin{align*}
    u(x)=|x|^{\eta}v(x) \quad\mbox{with}\quad
    \eta=-\frac{\alpha (N-4+2\alpha-\beta)}{2(N-2+\alpha)}.
    \end{align*}
    The choice of $\eta$ is ensuring  $\beta-2(\alpha+\eta)=\beta+2^{**}_{\alpha,\beta}\cdot\eta$. Then $2^{**}_{\alpha,\beta}=2^{**}_{0,\gamma}$, where $\gamma =\beta-2(\alpha+\eta)=\frac{\beta(N-2)-\alpha N}{N-2+\alpha}\in (-2,0]$.
    By using standard spherical decomposition we prove
    \begin{align*}
    \int_{\mathbb{R}^N}|x|^{-\beta}|\mathrm{div} (|x|^{\alpha}\nabla u)|^2 \mathrm{d}x
    \geq \int_{\mathbb{R}^N}\frac{|\Delta v|^2}{|x|^{\gamma}} \mathrm{d}x
    -C_{\gamma,\mu,1}\int_{\mathbb{R}^N}\frac{|\nabla v|^2}{|x|^{\gamma+2}} \mathrm{d}x
    +C_{\gamma,\mu,2}\int_{\mathbb{R}^N}\frac{|v|^2}{|x|^{\gamma+4}} \mathrm{d}x,
    \end{align*}
    and the equality holds if and only if $u$ is radially symmetry (the assumption $\alpha\leq 0$ will play a crucial role), where $\mu=\frac{N-4-\gamma}{2-N}\alpha+\gamma$. Furthermore,
    \begin{align*}
    \int_{\mathbb{R}^N}
    |x|^{\beta}|u|^{2^{**}_{\alpha,\beta}} \mathrm{d}x
    =\int_{\mathbb{R}^N}
    |x|^{\gamma}|v|^{2^{**}_{0,\gamma}} \mathrm{d}x.
    \end{align*}
    Then the result directly follows from Theorem \ref{thmwrsi}. The purpose of selecting such $\eta$ is to ensure that $\beta+\eta\cdot 2^{**}_{\alpha,\beta}= \gamma=-(2\alpha-\beta+2\eta)$.

    In fact, we can also obtain the result by using the direct change of variable,
    \begin{align}\label{dcv}
    u(x)=|x|^{\eta+\vartheta}w(|x|^{\zeta-1}x),
    \end{align}
    where $\eta$ is given as previous, and $\vartheta, \zeta$ are given in Remark \ref{remkspwrsi}. It is easy to verify that $\eta+\vartheta=0$ and $\zeta=1+\frac{\alpha}{N-2}$. Thus, \eqref{dcv} is
    \[
    u(x)=w(|x|^{\frac{\alpha}{N-2}}x),
    \]
    then the previous statements and Remark \ref{remkspwrsi} indicate
    \begin{align*}
    \int_{\mathbb{R}^N}|x|^{-\beta}|\mathrm{div} (|x|^{\alpha}\nabla u)|^2 \mathrm{d}x
    \geq \zeta^3\int_{\mathbb{R}^N}\frac{|\Delta w|^2}{|x|^{\gamma}} \mathrm{d}x,
    \end{align*}
    and the equality holds if and only if $u$ is radially symmetry. Furthermore,
    \begin{align*}
    \int_{\mathbb{R}^N}
    |x|^{\beta}|u|^{2^{**}_{\alpha,\beta}} \mathrm{d}x
    =\zeta^{-1}\int_{\mathbb{R}^N}
    |x|^{\gamma}|w|^{2^{**}_{0,\gamma}} \mathrm{d}x.
    \end{align*}
    Thus, the result also follows from Theorem \ref{corolehups} when $-2<\gamma<0$ and \cite{EFJ90,Li85-1,Va93} when $\gamma=0$. Indeed, this transformation method is similar to \cite{Ho97,LL17} as in \eqref{fockncv}.
    \end{remark}

    Furthermore, comparing our symmetry result of Theorem \ref{thmcknrs} and symmetry breaking result in \cite{DT23-f} to the ones in \cite{DEL16,FS03} shown as in Figure \ref{F1}, we give the following conjecture.

\vskip0.25cm

    {\em {\bf Conjecture:} the extremal functions in \eqref{cknrs} are also radially symmetry if $\alpha>0$ and $\alpha-2< \beta\leq \beta_{\mathrm{FS}}(\alpha)$}. See Figure \ref{F2}.

    \vskip0.25cm

    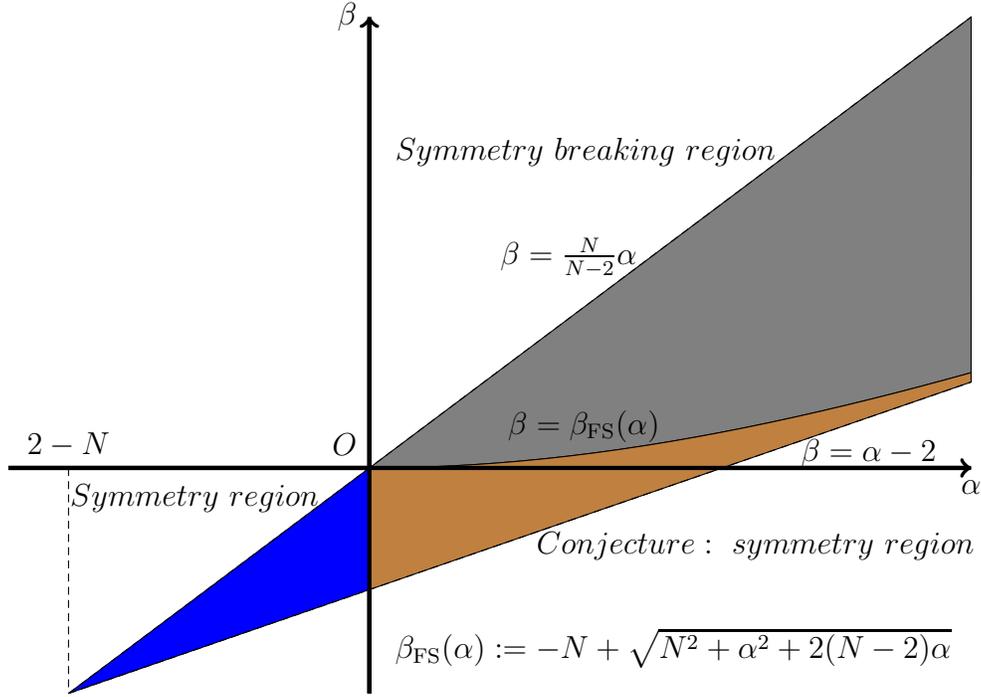
\begin{figure}[ht]
    \begin{tikzpicture}[scale=4]
		\draw[->,ultra thick](-1.2,0)--(0,0)node[above left]{$O$}--(2,0)node[below]{$\alpha$};
		\draw[->,ultra thick](0,-0.75)--(0,1.5)node[left]{$\beta$};
\draw[fill=gray,domain=0:2]plot(\x,{-1+((3/7*\x)^2+1^2)^0.5})
--(2,1.5)--(0,0);
\draw[fill=blue,domain=-1:1](-1,-0.75)--(0,0)--(0,-34/84)--(-1,-0.75);
\draw[fill=brown,domain=0:2]plot(\x,{-1+((3/7*\x)^2+1^2)^0.5})
--(2,2/7)--(34/29,0);
\draw[fill=brown,domain=-1:0](0,0)--(34/29,0)--(0,-34/84)--(0,0);
        \draw[densely dashed](-1,-3/4)--(2,1.5);
        \draw[densely dashed](-1,-3/4)--(2,2/7);
        \draw[densely dashed](-1,-0)node[above]{$2-N$}--(-1,-0.75);
        \draw[-,ultra thick](-1.2,0)--(2,0);
        \draw[-,ultra thick](0,-0.75)--(0,1.5);
		\node[left] at(1.92,0.05){$\beta=\alpha-2$};
        \node[right] at (0.4,0.7){$\beta=
        \frac{N}{N-2}\alpha$};
        \node[left] at (1,0.14){$\beta=\beta_{\mathrm{FS}}(\alpha)$};
        \node[right] at (0.05,-0.6){$\beta_{\mathrm{FS}}(\alpha):=
        -N+\sqrt{N^2+\alpha^2+2(N-2)\alpha}$};
        \node[right] at (0.05,1.05){$Symmetry\ breaking\ region$};
        \node[right] at (-1.03,-0.1){$Symmetry\ region$};
        \node[right] at (0.52,-0.26){$Conjecture:\ symmetry\ region$};
\end{tikzpicture}
\caption{\small On extremal functions of the new second-order CKN inequality \eqref{cknrs}. Symmetry breaking region appears in dark grey defined by $\alpha>0$ and $\beta_{\mathrm{FS}}(\alpha)<\beta<\frac{N}{N-2}\alpha$. We have proved that the symmetry holds in the blue region defined by $2-N<\alpha\leq 0$ and $\alpha-2< \beta\leq \frac{N}{N-2}\alpha$, and we conjecture that the symmetry also holds in the brown region defined by $\alpha>0$ and $\alpha-2< \beta\leq \beta_{\mathrm{FS}}(\alpha)$.}
\label{F2}
\end{figure}

    Next, we give the symmetry result of a singular second-order CKN type inequality which was established in our recent work \cite{DT24-2}: when $N\geq 5$, $\alpha>2-N$ and $\frac{N-4}{N-2}\alpha-4 \leq \beta\leq \alpha -2$ with also the condition \eqref{defac} by taking $2\alpha-\beta=-2a$, then for all $u\in \mathcal{D}^{2,2}_{\alpha,\beta}(\mathbb{R}^N\setminus\{0\})$,
    \begin{align}\label{cknrss}
    \int_{\mathbb{R}^N}|x|^{-\beta}|\mathrm{div} (|x|^{\alpha}\nabla u)|^2 \mathrm{d}x
    \geq \overline{\mathcal{S}}_{N,\alpha,\beta}
    \left(\int_{\mathbb{R}^N}|x|^{\xi}
    |u|^{\bar{2}^{**}_{\alpha,\beta}} \mathrm{d}x\right)^{\frac{2}{\bar{2}^{**}_{\alpha,\beta}}},
    \end{align}
    for some constant $\overline{\mathcal{S}}_{N,\alpha,\beta}>0$, where
    \begin{align*}
    \bar{2}^{**}_{\alpha,\beta}:=\frac{2(N+\xi)}{N+2\alpha-\beta-4}
    \quad \mbox{with}\quad (N+\beta)(N+\xi)=(N+2\alpha-\beta-4)^2.
    \end{align*}
    In fact, we will establish the equivalence relationship between \eqref{cknrs} and \eqref{cknrss} shown as in Lemma \ref{lemertwo}, thus from Proposition \ref{propgeqd} we know that \eqref{cknrss} also holds without the additional condition \eqref{defac} by taking $2\alpha-\beta=-2a$. As an affiliated result of Theorem \ref{thmcknrs}, we have

    \begin{theorem}\label{thmcknrss}
    Assume that $N\geq 5$, $2-N<\alpha\leq 0$ and $\frac{N-4}{N-2}\alpha-4 \leq \beta<\alpha -2$ (except for $\alpha=0$ and $\beta=-4$). The sharp constant $\overline{\mathcal{S}}_{N,\alpha,\beta}$ in \eqref{cknrss} is defined as
    \begin{align*}
    \overline{\mathcal{S}}_{N,\alpha,\beta}:=
    \inf_{u\in \mathcal{D}^{2,2}_{\alpha,\beta}(\mathbb{R}^N\setminus\{0\})
    \setminus\{0\}}
    \frac{\int_{\mathbb{R}^N}|x|^{-\beta}|\mathrm{div} (|x|^{\alpha}\nabla u)|^2 \mathrm{d}x}
    {\left(\int_{\mathbb{R}^N}|x|^{\xi}
    |u|^{\bar{2}^{**}_{\alpha,\beta}} \mathrm{d}x\right)^{\frac{2}{\bar{2}^{**}_{\alpha,\beta}}}},
    \end{align*}
    with the explicit form
    \begin{equation}\label{defsr}
    \overline{\mathcal{S}}_{N,\alpha,\beta}
    =\left(\frac{2}{\alpha-\beta-2}\right)
    ^{\frac{2(\alpha-\beta-2)}{N+2\alpha-\beta-4}-4}
    \left(\frac{2\pi^{\frac{N}{2}}}{\Gamma(\frac{N}{2})}\right)
    ^{\frac{2(\alpha-\beta-2)}{N+2\alpha-\beta-4}}
    \mathcal{B}\left(\frac{2(N+2\alpha-\beta-4)}{\alpha-\beta-2}\right),
    \end{equation}
    where $\mathcal{B}$ is given as in Theorem \ref{corolehups}.
    Furthermore, $\overline{\mathcal{S}}_{N,\alpha,\beta}$ can be achieved if and only if by
    \begin{align*}
    u(x)=A|x|^{2+\beta-\alpha}(1+|x|^{\alpha-\beta-2})
    ^{-\frac{N+\beta}{\alpha-\beta-2}},
    \end{align*}
    for $A\in\mathbb{R}\setminus\{0\}$ and $\lambda>0$.
    \end{theorem}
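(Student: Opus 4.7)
My plan is to deduce Theorem~\ref{thmcknrss} directly from Theorem~\ref{thmcknrs} via the equivalence in Lemma~\ref{lemertwo}, which is a Kelvin-type inversion of the form $u(x)=|x|^{2-N-\alpha}\tilde u(x/|x|^2)$. First, I would introduce the involution $\beta':=2\alpha-4-\beta$ on the weight exponent. A direct check shows that under this map the admissible range in Theorem~\ref{thmcknrss}, namely $2-N<\alpha\leq 0$ with $\frac{N-4}{N-2}\alpha-4\leq \beta<\alpha-2$, is sent bijectively to the range $2-N<\alpha\leq 0$ with $\alpha-2<\beta'\leq \frac{N}{N-2}\alpha$ covered by Theorem~\ref{thmcknrs}, and the excluded point $(\alpha,\beta)=(0,-4)$ corresponds exactly to the excluded point $(\alpha,\beta')=(0,0)$. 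Moreover the identities $2+\beta'-\alpha=\alpha-\beta-2$ and $N+\beta'=N+2\alpha-\beta-4$, together with the defining relation $(N+\beta)(N+\xi)=(N+2\alpha-\beta-4)^2$, immediately give $\bar{2}^{**}_{\alpha,\beta}=2^{**}_{\alpha,\beta'}$.

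Next, by Lemma~\ref{lemertwo} the Kelvin transformation induces an isomorphism of $\mathcal{D}^{2,2}_{\alpha,\beta}(\mathbb{R}^N\setminus\{0\})$ onto $\mathcal{D}^{2,2}_{\alpha,\beta'}(\mathbb{R}^N)$ which preserves the Dirichlet-type energy $\int_{\mathbb{R}^N}|x|^{-\beta}|\mathrm{div}(|x|^\alpha\nabla u)|^2\,\mathrm{d}x$ and converts $\int_{\mathbb{R}^N}|x|^\xi|u|^{\bar{2}^{**}_{\alpha,\beta}}\,\mathrm{d}x$ into $\int_{\mathbb{R}^N}|x|^{\beta'}|\tilde u|^{2^{**}_{\alpha,\beta'}}\,\mathrm{d}x$, once the Jacobian $|x|^{-2N}$ is combined with the weight $|x|^{2-N-\alpha}$. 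Hence
\[
\overline{\mathcal{S}}_{N,\alpha,\beta}=\mathcal{S}_{N,\alpha,\beta'},
\]
and extremals correspond to extremals. Substituting $\beta'=2\alpha-4-\beta$ into the explicit formula~\eqref{bccknrs} and using the identities above converts~\eqref{bccknrs} verbatim into~\eqref{defsr}.

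For the extremal functions, Theorem~\ref{thmcknrs} yields $\tilde u(y)=A(\lambda+|y|^{\alpha-\beta-2})^{-\frac{N+\beta}{\alpha-\beta-2}}$. Setting $y=x/|x|^2$, so $|y|=1/|x|$, and applying the transform gives
\[
u(x)=|x|^{2-N-\alpha}A\bigl(\lambda+|x|^{-(\alpha-\beta-2)}\bigr)^{-\frac{N+\beta}{\alpha-\beta-2}}
=A\,|x|^{2-N-\alpha}|x|^{N+\beta}\bigl(1+\lambda|x|^{\alpha-\beta-2}\bigr)^{-\frac{N+\beta}{\alpha-\beta-2}},
\]
which simplifies to $A|x|^{2+\beta-\alpha}(1+\lambda|x|^{\alpha-\beta-2})^{-\frac{N+\beta}{\alpha-\beta-2}}$ and matches the stated form after relabelling the scaling parameter.

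The main obstacle is the rigorous proof of Lemma~\ref{lemertwo}: one must carefully compute the action of the Kelvin inversion on the weighted operator $|x|^{-\beta/2}\mathrm{div}(|x|^\alpha\nabla \cdot)$ and verify the functional-analytic compatibility at the origin and at infinity (in particular, why the origin must be removed for the range $\beta<\alpha-2$). Once that equivalence is granted, the remainder of the proof of Theorem~\ref{thmcknrss} is pure algebraic substitution from Theorem~\ref{thmcknrs}.
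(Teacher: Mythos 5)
Your overall strategy coincides with the paper's: both proofs reduce Theorem \ref{thmcknrss} to Theorem \ref{thmcknrs} through the involution $\beta\mapsto\bar\beta=2\alpha-\beta-4$, check that the parameter range $\frac{N-4}{N-2}\alpha-4\le\beta<\alpha-2$ is sent onto $\alpha-2<\bar\beta\le\frac{N}{N-2}\alpha$ (with $(0,-4)\leftrightarrow(0,0)$), identify $\bar{2}^{**}_{\alpha,\beta}=2^{**}_{\alpha,\bar\beta}$, and transport the sharp constant and extremals; your algebra for all of these steps, including the computation of the extremal family, is correct. The one real difference is how the equivalence is realized, and here you misdescribe Lemma \ref{lemertwo}: in the paper it is \emph{not} a Kelvin-type inversion but the pure power substitution $u(x)=|x|^{\eta}v(x)$ with $\eta=2+\beta-\alpha$, proved by expanding $\mathrm{div}(|x|^{\alpha}\nabla(|x|^{\eta}v))$ as in \eqref{ertwo2t}--\eqref{ertwo2t2} and choosing $\eta$ so that the zeroth- and first-order coefficients match those of the $(\alpha,\bar\beta)$-energy; no inversion and no Jacobian are involved, and the lemma is already proved in the paper, so the ``main obstacle'' you flag does not exist on the paper's route. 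Your weighted Kelvin map $u(x)=|x|^{2-N-\alpha}\tilde u(x/|x|^{2})$ does also work — the operator $\mathrm{div}(|x|^{\alpha}\nabla\cdot)$ satisfies the intertwining identity with the weight $|x|^{2-N-\alpha}$ (since $|x|^{2-N-\alpha}$ is its radial harmonic profile), and after the change of variables one indeed lands on the $(\alpha,\bar\beta)$-problem; in fact your map is the composition of the paper's substitution with an inversion symmetry of the target problem, which is why both produce the same extremal family $A|x|^{2+\beta-\alpha}(\lambda+|x|^{\alpha-\beta-2})^{-\frac{N+\beta}{\alpha-\beta-2}}$. But if you take this route you must prove the Kelvin intertwining identity and the density/isometry statement yourself rather than attributing them to Lemma \ref{lemertwo}; alternatively, simply invoke the lemma's actual substitution, after which the rest of your argument goes through verbatim and the proof is complete.
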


    \begin{remark}\label{remefrnr}\rm
    When $\alpha=0$ and $\beta=-4$, \eqref{cknrss} reduces into classical second-order Sobolev inequality by taking $u(x)=|x|^2v(x)$ (see \cite{DT24-2}), so the extremal functions are also ``translation invariant".

    Furthermore, let us give some comments about the reachability and non-reachability of \eqref{cknrss}. In fact, we proved in \cite{DT24-2} that when $\beta=\alpha-2$, or $\beta=\frac{N-4}{N-2}\alpha-4$ and $\alpha>0$, the singular second-order CKN type inequality \eqref{cknrss} is strict for any nonzero functions, while for the other cases equality can always be achieved. Therefore, by the equivalence relationship between \eqref{cknrs} and \eqref{cknrss} (see Lemma \ref{lemertwo}), we have that, when $\beta=\alpha-2$, or $\beta=\frac{N}{N-2}\alpha$ and $\alpha>0$, the second-order CKN type inequality \eqref{cknrs} is also strict for any nonzero functions, while for the other cases equality can always be achieved.
    \end{remark}

\subsection{Hardy-Rellich inequality}\label{subsecthri}

    Finally, let us recall the following inequality:
    \begin{align}\label{hri}
    \int_{\mathbb{R}^N}|x|^{-2a}|\Delta u|^2 \mathrm{d}x
    \geq C_*
    \int_{\mathbb{R}^N}|x|^{-2a-2}|\nabla u|^2 \mathrm{d}x,\quad \forall u\in C^\infty_0(\mathbb{R}^N),
    \end{align}
    for some $C_*\geq 0$, where $-\infty<a<\frac{N-2}{2}$. If $a$ satisfies \eqref{defac}, then \eqref{hri} is equivalent to the second-order CKN inequality \eqref{cknh} and $C_*>0$. \eqref{hri} is usually named as the weighted Hardy-Rellich inequality, thanks to Tertikas and Zographopoulos \cite{TZ07} studied this problem and when $N\geq 5$ and $0\leq a<\frac{N-4}{2}$, the authors obtained the sharp constant with the explicit form
    \begin{align*}
    C_*=\inf_{k\in\mathbb{N}}\frac{\left(\frac{(N-4-2a)(N+2a)}{4}
    +k(N-2+k)\right)^2}{(\frac{N-4-2a}{2})^2+k(N-2+k)}.
    \end{align*}
    Furthermore, Ghoussoub and Moradifam in \cite[Theorem 6.1]{GM11} studied the general case $N\geq 1$ and $a\leq \frac{N-2}{2}$ under different conditions for $a$, see also \cite[Corollary 5.9]{DM22} for $-\frac{N}{2}<a\leq \frac{N-4}{2}$. It is obvious that when $N\geq 5$ and $a=0$, Tertikas and Zographopoulos \cite{TZ07} obtained the sharp constant $C_*=\frac{N^2}{4}$, while in the cases $N=3$ or $4$ the sharp constant does not satisfies the general formula. In fact,  Cazacu \cite{Ca20} proved the sharp constant $C_*$ is strictly less than $\frac{N^2}{4}$ when $N=3$ or $4$, that is, $C_*=3$ when $N=4$ and $C_*=\frac{25}{36}$ when $N=3$. In present paper, we are concerned about a weak form of \eqref{hri} which reads that:

    \begin{theorem}\label{thmhriw}
    Let $N\geq 1$, then for each $-\infty<a<\frac{N-2}{2}$, it holds that
    \begin{align}\label{hriw}
    \int_{\mathbb{R}^N}|x|^{-2a}|\Delta u|^2 \mathrm{d}x
    \geq C(N,a)
    \int_{\mathbb{R}^N}|x|^{-2a-4}(x\cdot \nabla u)^2 \mathrm{d}x,\quad \forall u\in C^\infty_0(\mathbb{R}^N),
    \end{align}
    with sharp constant
    \begin{align*}
    C(N,a)=\left(\frac{N+2a}{2}\right)^2
    \end{align*}
    if $\frac{-2-\sqrt{N^2-2N+2}}{2}\leq a\leq \frac{-2+\sqrt{N^2-2N+2}}{2}$ for $N\geq 2$, or $\frac{-2-\sqrt{5}}{2}\leq a< -\frac{1}{2}$ for $N=1$, and
    \begin{align*}
    C(N,a)=\left(\frac{2}{N-4-2a}\right)^2\inf\limits_{k\in\mathbb{N}}
    \left(k+\frac{N}{2}+a\right)^2\left(k+\frac{N-4}{2}-a\right)^2
    \end{align*}
    if $a<\frac{-2-\sqrt{N^2-2N+2}}{2}$ or $\frac{-2+\sqrt{N^2-2N+2}}{2}<a<\frac{N-2}{2}$ for $N\geq 2$, or $a<\frac{-2-\sqrt{5}}{2}$ for $N=1$.
    In particular, for each $N\geq 3$,
    \begin{align}\label{hriw0}
    \int_{\mathbb{R}^N}|\Delta u|^2 \mathrm{d}x
    \geq \frac{N^2}{4}
    \int_{\mathbb{R}^N}|x|^{-4}(x\cdot \nabla u)^2 \mathrm{d}x,\quad \forall u\in C^\infty_0(\mathbb{R}^N),
    \end{align}
    with sharp constant $\frac{N^2}{4}$.
    \end{theorem}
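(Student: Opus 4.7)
The plan is to reduce \eqref{hriw} to a one-parameter family of one-dimensional weighted inequalities via spherical harmonics, diagonalise each by an Emden--Fowler substitution, and compute the sharp constant by Plancherel's theorem.

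\emph{Spherical decomposition.} I write $u(x)=\sum_{k=0}^{\infty}\phi_k(r)Y_k(\omega)$ with $r=|x|$, $\omega=x/|x|$, and $\{Y_k\}$ an orthonormal basis of spherical harmonics with $-\Delta_{S^{N-1}}Y_k = c_k Y_k$, $c_k := k(k+N-2)$. Then $\Delta u=\sum_k (L_k\phi_k) Y_k$ with $L_k\phi = \phi'' + \tfrac{N-1}{r}\phi' - \tfrac{c_k}{r^2}\phi$, while $x\cdot\nabla u = \sum_k r\phi_k'(r) Y_k$. By orthogonality \eqref{hriw} reduces to the mode-by-mode inequalities
\begin{equation*}
\int_0^\infty r^{N-1-2a}|L_k\phi|^2\,\mathrm{d}r \;\geq\; C(N,a)\int_0^\infty r^{N-3-2a}|\phi'|^2\,\mathrm{d}r,\qquad k\in\mathbb{N},
\end{equation*}
so $C(N,a)$ is the infimum over $k$ of the sharp constants in these one-dimensional problems; for $N=1$ only the even/odd modes $k=0,1$ appear.

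\emph{Emden--Fowler and Plancherel.} Setting $r=e^t$ and $\phi(r)=r^{\sigma}\psi(t)$ with the scale-killing choice $\sigma := a+2-\tfrac{N}{2}$, a direct computation gives $L_k\phi = r^{\sigma-2}\bigl(\psi''+2(a+1)\psi'+A_k\psi\bigr)$ and $r\phi'(r)=r^\sigma(\sigma\psi+\psi')$, where
\begin{equation*}
A_k:=\sigma(\sigma+N-2)-c_k = -\bigl(k+\tfrac{N}{2}+a\bigr)\bigl(k+\tfrac{N-4}{2}-a\bigr).
\end{equation*}
Since $2\sigma+N-4-2a=0$ the $r$-weights cancel and the $k$-th inequality becomes a constant-coefficient one on $\mathbb{R}$. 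Taking Fourier transforms and applying Plancherel, the sharp constant in the $k$-th mode equals $\inf_{s\geq 0}f_k(s)$ with
\begin{equation*}
f_k(s) := \frac{(s-A_k)^2+4(a+1)^2 s}{s+\sigma^2},\qquad s=\xi^2.
\end{equation*}

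\emph{Minimisation in $(k,s)$.} Euclidean division produces the useful decomposition
\begin{equation*}
f_k(s)=(s+\sigma^2)+\frac{K_k}{s+\sigma^2}+4(a+1)^2-2A_k-2\sigma^2,\qquad K_k:=c_k\bigl(c_k-4(a+1)\sigma\bigr).
\end{equation*}
A case analysis on the sign of $K_k-\sigma^4$ shows $\inf_{s\geq 0} f_k(s) = f_k(0) = A_k^2/\sigma^2$ whenever $K_k\leq\sigma^4$; if $K_k>\sigma^4$, AM--GM produces an interior critical point with value $\bigl(\sqrt{c_k}+\sqrt{c_k-4(a+1)\sigma}\bigr)^{2}+4(a+1)^2$, which a finite+asymptotic comparison shows never beats $\inf_{k'} A_{k'}^2/\sigma^2$. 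Therefore
\begin{equation*}
C(N,a)=\inf_{k\in\mathbb{N}}\frac{A_k^2}{\sigma^2}=\Bigl(\frac{2}{N-4-2a}\Bigr)^{\!2}\inf_{k\in\mathbb{N}}\bigl(k+\tfrac{N}{2}+a\bigr)^{\!2}\bigl(k+\tfrac{N-4}{2}-a\bigr)^{\!2}.
\end{equation*}
Using the identity $(k+\tfrac{N}{2}+a)(k+\tfrac{N-4}{2}-a)=\tfrac14\bigl((N-2+2k)^2-4(a+1)^2\bigr)$, the minimiser is the index whose $(N-2+2k)^2$ is closest to $4(a+1)^2$; elementary arithmetic confirms $k=0$ is the minimiser exactly when $4(a+1)^2\leq N^2-2N+2$, i.e.\ in the first $a$-range of the theorem, in which case $C(N,a)=A_0^2/\sigma^2=\bigl(\tfrac{N+2a}{2}\bigr)^{\!2}$; specialising to $a=0$, $N\geq 3$ gives \eqref{hriw0}.

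The main technical obstacle lies in the last step: verifying that when $K_k>\sigma^4$ the interior critical value $\bigl(\sqrt{c_k}+\sqrt{c_k-4(a+1)\sigma}\bigr)^{2}+4(a+1)^2$ never undercuts $\inf_{k'} A_{k'}^2/\sigma^2$. This reduces to comparing two explicit algebraic expressions across $k\in\mathbb{N}$, and must be handled by distinguishing the signs of $(a+1)$ and $\sigma$; the comparison is clean for large $k$ (since $A_k^2/\sigma^2\sim c_k^2/\sigma^2$ grows faster than the interior expression, which grows like $c_k$) but requires care for the small-$k$ regimes near the transition value $4(a+1)^2=N^2-2N+2$.
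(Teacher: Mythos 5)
Your reduction to mode-wise one-dimensional problems, the Emden--Fowler substitution and the Plancherel computation of $\inf_{s\geq 0}f_k(s)$ are sound as far as they go (the identities $A_k=-\bigl(k+\tfrac N2+a\bigr)\bigl(k+\tfrac{N-4}2-a\bigr)$, $K_k=c_k\bigl(c_k-4(a+1)\sigma\bigr)$ and the interior critical value all check out), and this is a genuinely different route from the paper's: the paper never computes per-mode sharp constants, but instead proves the lower bound by combining the one-dimensional Rellich inequality $\int_0^\infty (f'')^2r^{N-1-2a}\,\mathrm{d}r\geq\bigl(\tfrac{N-2-2a}{2}\bigr)^2\int_0^\infty (f')^2r^{N-3-2a}\,\mathrm{d}r$ with the one-dimensional Hardy inequality and a sign dichotomy for the angular coefficient, and then proves optimality with explicit quasi-extremals $r^{-(N-4-2a)/2-\epsilon}g(r)\Psi_{k_*}$, plus a logarithmic sequence for $a=\tfrac{N-4}{2}$. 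However, your argument has a genuine gap exactly at the step you defer: the verification that, whenever $K_k>\sigma^4$, the interior value $\bigl(\sqrt{c_k}+\sqrt{c_k-4(a+1)\sigma}\bigr)^2+4(a+1)^2$ never undercuts $\inf_{k'}A_{k'}^2/\sigma^2$. This comparison is not a routine afterthought; it is precisely where the identification $C(N,a)=\inf_k A_k^2/\sigma^2$ (hence both displayed formulas) is decided, and it is the analogue of the comparison that \emph{fails} for the full Hardy--Rellich inequality in dimensions $3$ and $4$ (Cazacu), which is why the paper emphasizes that \eqref{hriw} behaves differently. Your ``finite $+$ asymptotic'' sketch covers large $k$ but, as you admit, not the small-$k$, near-transition regimes where $\inf_{k'}A_{k'}^2/\sigma^2$ and the interior values are of the same order; until that two-parameter case analysis is actually carried out, the theorem is not proved by your scheme. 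The paper's structure avoids this entirely (cheap lower bound per mode, matching test functions concentrating at the boundary frequency).

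Two further points. First, $N=1$: your (correct) observation that only the modes $k=0,1$ occur, both with $c_k=0$, is incompatible with the formula you then claim to recover; following your reduction honestly in dimension one gives $C(1,a)=A_0^2/\sigma^2=\bigl(\tfrac{1+2a}{2}\bigr)^2$ for every $a<-\tfrac12$, not the second $N=1$ formula of the statement, whose $\inf$ runs over all $k\in\mathbb{N}$ (the paper's own Steps III--IV for $N=1$ invoke harmonics of degree $\bar k\geq 2$, which do not exist on $\mathbb{S}^{0}$). So as written your proposal does not deliver the $N=1$ part of the statement, and you must either justify the use of all $k$ or treat $N=1$ separately and confront the discrepancy. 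Second, two technicalities should not be skipped: for $\tfrac{N-4}{2}\leq a<\tfrac{N-2}{2}$ one has $\sigma\geq 0$ and the transplanted $\psi$ in the $k=0$ mode (coming from $f_0$ with $f_0(0)\neq 0$) is not in $L^2(\mathbb{R})$, so Plancherel must be applied after the factorization $\partial^2+2(a+1)\partial+A_0=(\partial+\lambda)(\partial+\sigma)$ with $\lambda=a+\tfrac N2$, i.e.\ to $w=\psi'+\sigma\psi$, or via a cutoff argument; and at the borderline $a=\tfrac{N-4}{2}$ the quantities $A_k^2/\sigma^2$ and $\bigl(\tfrac{2}{N-4-2a}\bigr)^2$ degenerate ($\sigma=0$), a case the paper treats with a separate minimizing sequence and which your write-up must also single out.
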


    It is not difficult to verify (see \eqref{defcnas2} in Remark \ref{remwfhri} directly) that our result \eqref{hriw} implies the weighted Rellich inequality \eqref{wri} of the work of Caldiroli and Musina \cite{CM12}, by using the Hardy type inequality:
    \begin{align*}
    \int_{\mathbb{R}^N}|x|^{-2a-4}(x\cdot \nabla u)^2 \mathrm{d}x
    \geq \left(\frac{N-4-2a}{2}\right)^2
    \int_{\mathbb{R}^N}|x|^{-2a-4}|u|^2 \mathrm{d}x,\quad \forall u\in C^\infty_0(\mathbb{R}^N).
    \end{align*}
    Moreover, when $a=\frac{N-4}{2}$, Ghoussoub and Moradifam in \cite[Theorem 6.1]{GM11} proved the sharp constant in \eqref{hri} is $C_*=\min\{(N-2)^2,N-1\}$, then our conclusion about \eqref{hriw} indicates $C\left(N,\frac{N-4}{2}\right)=(N-2)^2>C_*$ if $N=1$ or $N\geq 4$, while $C\left(N,\frac{N-4}{2}\right)=C_*$ if $N=2$ or $3$.
    Furthermore, our weak form Hardy-Rellich inequality \eqref{hriw0} admits consistent formula for sharp constant when $N=3$ or $4$, and $C\left(N,0\right)=\frac{N^2}{4}>C_*$, which shows a different phenomenon comparing with Cazacu \cite{Ca20}.

    \begin{remark}\label{remer}\rm
    From the sharp weak form weighted Hardy-Rellich inequality \eqref{hriw}, it is not difficult to verify that for $N\geq 5$ and $-N<\alpha-2\leq \beta\leq \frac{N}{N-2}\alpha$, the norms $\int_{\mathbb{R}^N} |x|^{-\beta}|\mathrm{div}(|x|^\alpha\nabla u)|^2 \mathrm{d}x$ and $\int_{\mathbb{R}^N} |x|^{2\alpha-\beta}|\Delta u|^2 \mathrm{d}x$ are equivalent if and only if \eqref{defac} holds with $2\alpha-\beta=-2a$. In fact, for all $u\in C^\infty_0(\mathbb{R}^N)$,
    \begin{align*}
    \int_{\mathbb{R}^N}|x|^{-\beta}|\mathrm{div} (|x|^{\alpha}\nabla u)|^2\mathrm{d}x
    = & \int_{\mathbb{R}^N}|x|^{2\alpha-\beta}|\Delta u|^2\mathrm{d}x
    + \alpha (N-4+2\alpha-\beta)\int_{\mathbb{R}^N}
    |x|^{2\alpha-2-\beta}|\nabla u|^2\mathrm{d}x
    \nonumber \\
    & + \alpha(2\beta-3\alpha+4) \int_{\mathbb{R}^N}|x|^{2\alpha-4-\beta}(x\cdot\nabla u)^2\mathrm{d}x,
    \end{align*}
    see \eqref{ckn2nfe}, then
    \begin{align*}
    \int_{\mathbb{R}^N}|x|^{-\beta}|\mathrm{div} (|x|^{\alpha}\nabla u)|^2\mathrm{d}x
    \leq & \int_{\mathbb{R}^N}|x|^{2\alpha-\beta}|\Delta u|^2\mathrm{d}x
    + \alpha (N-\alpha+\beta)\int_{\mathbb{R}^N}
    |x|^{2\alpha-4-\beta}(x\cdot \nabla u)^2\mathrm{d}x
    \end{align*}
    if $\alpha<0$, and
    \begin{align*}
    \int_{\mathbb{R}^N}|x|^{-\beta}|\mathrm{div} (|x|^{\alpha}\nabla u)|^2\mathrm{d}x
    \geq & \int_{\mathbb{R}^N}|x|^{2\alpha-\beta}|\Delta u|^2\mathrm{d}x
    + \alpha (N-\alpha+\beta)\int_{\mathbb{R}^N}
    |x|^{2\alpha-4-\beta}(x\cdot \nabla u)^2\mathrm{d}x
    \end{align*}
    if $\alpha>0$. Note that $\frac{2\alpha-\beta}{-2}<\frac{N-4}{2}
    <\frac{-2+\sqrt{N^2-2N+2}}{2}$, then we can split it into two cases: $\frac{-2-\sqrt{N^2-2N+2}}{2}\leq \frac{2\alpha-\beta}{-2}<\frac{N-4}{2}$ and $\frac{2\alpha-\beta}{-2}<\frac{-2-\sqrt{N^2-2N+2}}{2}$, thus from Theorem \ref{thmhriw} we can deduce the conclusion holds. In fact, from \eqref{defcnas2} in Remark \ref{remwfhri}, the conclusion holds directly.
    \end{remark}

\subsection{Structure of the paper}\label{subsect:structrue}
    The paper is organized as follows: in Section \ref{sectss}, we prove the solutions of \eqref{lehe} are unique thus can be classified, then we establish two cases of sharp Stein-Weiss inequalities \eqref{swi} with explicit form of extremal functions when $t=r$ and $t=2$ separately, and also establish a standard sharp second-order CKN inequality \eqref{lehi} with radially symmetry extremal functions. Section \ref{sectswrsi} is devoted to establishing the sharp weighted Rellich-Sobolev type inequality \eqref{wrsi} by using standard spherical decomposition, and we also establish a sharp weighted Hardy-Rellich inequality \eqref{shri} by taking $\mu\uparrow N-4$ in \eqref{wrsi}. In Section \ref{sectcknps}, taking the transformation $u(x)=|x|^{\eta}v(x)$ with suitable $\eta$, we deduce the sharp CKN type inequality \eqref{thmcknrs} with radially symmetry extremal functions by using the result of Theorem \ref{thmwrsi}. Then in Section \ref{sectcknpss}, we establish an equivalent relationship of two second-order CKN type inequalities, then we obtain a sharp singular CKN type inequality \eqref{cknrss} also with radially symmetry extremal functions. Finally, we derive the sharp weak form weighted Hardy-Rellich inequality \eqref{hriw} by standard spherical decomposition and give the proof of Theorem \ref{thmhriw}.

\section{{\bfseries Symmetry of solutions}}\label{sectss}

    In this section, we will classify all solutions of equation \eqref{lehe}. Let us recall a uniqueness result established by Chen and Li \cite{CL08} for the following weighted system:
    \begin{eqnarray}\label{wlescl}
    \left\{ \arraycolsep=1.5pt
       \begin{array}{ll}
        -\Delta u=\frac{u^p}{|x|^{b(p+1)}}\quad \mbox{in}\quad  \mathbb{R}^N,\\[2mm]
        -\Delta v=\frac{v^q}{|x|^{a(q+1)}}\quad \mbox{in}\quad  \mathbb{R}^N,\\[2mm]
        u,v\geq 0\quad\mbox{in}\quad \mathbb{R}^N,
        \end{array}
    \right.
    \end{eqnarray}
    where $0<p,q<\infty$, $a, b\geq 0$, $\frac{a}{N}<\frac{1}{q+1}<\frac{N-2+a}{N}$, $\frac{1}{p+1}+\frac{1}{q+1}=\frac{N-2+a+b}{N}$, and
    \begin{align*}
    b(p+1)<2,\quad a(q+1)<2,
    \end{align*}
    which states that the solutions of the system \eqref{wlescl} are unique in the sense that if $(u_1,v_1)$ and $(u_2,v_2)$ are any two pairs of solutions with $u_1(0)=u_2(0)$, then $v_1(0)=v_2(0)$ and hence $(u_1(x),v_1(x))\equiv (u_2(x),v_2(x))$.

\vskip0.25cm

\noindent{\bf \em Proof of Theorem \ref{thmlehups}}.
    Note that equation \eqref{lehe} is equivalent to the following Hardy-Lane-Emden system
    \begin{eqnarray}\label{lehs}
    \left\{ \arraycolsep=1.5pt
       \begin{array}{ll}
        -\Delta u=|x|^\gamma v\quad \mbox{in}\quad  \mathbb{R}^N,\\[2mm]
        -\Delta v=|x|^\gamma u^q\quad \mbox{in}\quad  \mathbb{R}^N,\\[2mm]
        u\geq 0\quad\mbox{in}\quad \mathbb{R}^N.
        \end{array}
    \right.
    \end{eqnarray}
    By using the maximum principle, from $-\Delta v=|x|^\gamma u^q$ and $u\geq 0$ we know $v$ is also nonnegative. Then let us consider system \eqref{wlescl} with
    \[
    p=1,\quad b(p+1)=a(q+1),
    \]
    then $a=\frac{b(N-4+2b)}{N-2b}$ and $0\leq b<1$. Let $b=-\frac{\gamma}{2}\in (0,1)$, then we obtain the solutions of the system \eqref{lehs} are also unique, therefore equation \eqref{lehe} admits a unique solution in the sense of Chen and Li \cite{CL08}.

    Now, let us show the solution of equation \eqref{lehe} must be the form $U_\lambda$ as in \eqref{defula}. On the one hand, it is obvious that $U_\lambda$ is a solution of \eqref{lehe} for each $\lambda\geq 0$, see our recent work \cite{DT23-jfa}. On the other hand, assume that $u(x)$ is a solution of \eqref{lehe}, then we can always choose nonnegative $\lambda$ such that
    \[
    u(0)=U_\lambda(0),
    \]
    thus by the uniqueness result of Chen and Li \cite{CL08}, we must have $u(x)\equiv U_\lambda(x)$. This completes the proof of the theorem.
    \qed

\vskip0.25cm

\noindent{\bf \em Proof of Theorem \ref{thmswia}}. Let us consider system \eqref{sweletet} when $\lambda=N-2$. From the uniqueness result of Chen and Li \cite[Theorem 2]{CL08} where $a=b=\frac{N}{p+1}-\frac{N-2}{2}$ and $p=q$ with $b(p+1)<2$, we can directly deduce the sharp Stein-Weiss inequality \eqref{swi} when $t=r=\frac{2N}{N+2-a-b}$, that is, there is equality in \eqref{swi} with sharp constant if and only if $f\equiv (const.) g$ and
    \begin{align*}
    & f(x)=A|x|^{-\frac{b(N+2(1-b))}{N-2(1-b)}}
    \left(c+|x|^{\frac{2(N-2)(1-b)}{N-2(1-b)}}
    \right)^{-\frac{N+2(1-b)}{2(1-b)}},
    \end{align*}
    for $A\in \mathbb{R}$, $c>0$.

Furthermore, from the uniqueness result of Theorem \ref{thmlehups} where $p=1$ and $b(p+1)=a(q+1)<2$, we can also directly deduce the sharp Stein-Weiss inequality \eqref{swi} when $t=2$ with $a=\frac{b(N-4+2b)}{N-2b}$ and $0<b<1$ which implies $r=\frac{N+4-6b}{2(N-2b)}$, that is, there is equality in \eqref{swi} with sharp constant if and only if
    \begin{align*}
    f(x) & =A|x|^{-\frac{b(N+4-2b)}{N-2b}}
    (c+|x|^{2-2b})^{-\frac{N+4-2b}{2-2b}},\\
    g(x) & =-B|x|^{b}\Delta \left((c+|x|^{2-2b})
    ^{-\frac{N-4+2b}{2-2b}}\right)
    =B'|x|^{-b}\int_{\mathbb{R}^N}
    \frac{(c+|y|^{2-2b})^{-\frac{N+4-2b}{2-2b}}}
    {|y|^{\frac{2Nb}{N-2b}}|x-y|^{N-2}}\mathrm{d}y\\
    & =B'\left[(N-2b)c+(2-2b)|x|^{2-2b}\right]|x|^{-b}
    (c+|x|^{2-2b})
    ^{-\frac{N-2b}{2-2b}},
    \end{align*}
    for $A, B\in \mathbb{R}$, $c>0$. Putting the extremal functions into Stein-Weiss inequality\eqref{swi}, we can derive the explicit form of sharp constants. Now, the proof of Theorem \ref{thmswia} is completed.
\qed

\vskip0.25cm

\noindent{\bf \em Proof of Theorem \ref{corolehups}}. From \cite[Theorem A.2 (i)]{MS14} (see also \cite[Theorem 1.2]{CM11}), we know equality \eqref{lehi} with sharp constant holds for some nontrivial functions. It is well known that extremal functions of \eqref{lehi} are always solutions (multiplies some suitable constant) of the following related Euler-Lagrange equation:
    \begin{equation}\label{lehele}
    \Delta(|x|^{-\gamma}\Delta u)=|x|^\gamma |u|^{2^{**}_{0,\gamma}-2}u,\quad u\in \mathcal{D}^{2,2}_{0,\gamma}(\mathbb{R}^N).
    \end{equation}
    Furthermore, nontrivial extremal functions of \eqref{lehi} are always least energy solutions (multiplies some suitable constant) of equation \eqref{lehele}.
    In fact, the energy functional of \eqref{lehele} is defined as
    \begin{align}\label{deffe}
    \Phi(u)=\frac{1}{2}\int_{\mathbb{R}^N}\frac{|\Delta u|^2}{|x|^{\gamma}} \mathrm{d}x
    -\frac{1}{2^{**}_{0,\gamma}}\int_{\mathbb{R}^N}
    |x|^{\gamma}|u|^{2^{**}_{0,\gamma}} \mathrm{d}x,
    \quad u\in \mathcal{D}^{2,2}_{0,\gamma}(\mathbb{R}^N).
    \end{align}
    Define the Nehari manifold
    \begin{align*}
    \mathcal{N}:=\left\{u\in \mathcal{D}^{2,2}_{0,\gamma}(\mathbb{R}^N) \backslash\{0\}: \langle \Phi'(u),u\rangle=0\right\}.
    \end{align*}
    We know that the least energy of \eqref{lehele} is defined as $\inf\{\Phi(u):\ u\in \mathcal{D}^{2,2}_{0,\gamma}(\mathbb{R}^N) \backslash\{0\},\ \Phi'(u)=0\}$ which is large than $\inf\limits_{u\in \mathcal{N}}\Phi(u)$.
    For $v\in \mathcal{N}$ we have
    \begin{align*}
    \Phi(v)
    & =\left(\frac{1}{2}-\frac{1}{2^{**}_{0,\gamma}}\right)
    \int_{\mathbb{R}^N}\frac{|\Delta v|^2}{|x|^{\gamma}} \mathrm{d}x
    = \left(\frac{1}{2}-\frac{1}{2^{**}_{0,\gamma}}\right)
    \left(\frac{\int_{\mathbb{R}^N}\frac{|\Delta v|^2}{|x|^{\gamma}} \mathrm{d}x}
    {\left(\int_{\mathbb{R}^N}
    |x|^{\gamma}|v|^{2^{**}_{0,\gamma}} \mathrm{d}x\right)^{\frac{2}{2^{**}_{0,\gamma}}}}
    \right)^{\frac{2^{**}_{0,\gamma}}{2^{**}_{0,\gamma}-2}}
    \\
    & \geq\left(\frac{1}{2}-\frac{1}{2^{**}_{0,\gamma}}\right)
    \mathcal{S}_{N,\gamma}^{\frac{2^{**}_{0,\gamma}}{2^{**}_{0,\gamma}-2}}
    = \Phi(cu),
    \end{align*}
    for any minimizer $u$ of \eqref{lehi} with some suitable constant $c$ such that $cu$ satisfies \eqref{lehele}, therefore $cu$ is also the least energy solution.

    Next, we show that the least energy solutions of \eqref{lehele} can not change sign, which implies the extremal functions of \eqref{lehi} also can not change sign. If not, we suppose that $w$ is a least energy solution of \eqref{lehele} which is changing sign. Define
    \[
    w^+(x)=\max\{w(x),0\},\quad w^-(x)=\min\{w(x),0\}.
    \]
    Note that $w^+(x)$ and $w^-(x)$ are also nontrivial solutions of \eqref{lehele}. Therefore,
    \begin{align*}
    \left(\frac{1}{2}-\frac{1}{2^{**}_{0,\gamma}}\right)
    \mathcal{S}_{N,\gamma}^{\frac{2^{**}_{0,\gamma}}{2^{**}_{0,\gamma}-2}}
    & =\Phi(w)=\Phi(w^+)+\Phi(w^-)\geq 2\left(\frac{1}{2}-\frac{1}{2^{**}_{0,\gamma}}\right)
    \mathcal{S}_{N,\gamma}^{\frac{2^{**}_{0,\gamma}}{2^{**}_{0,\gamma}-2}},
    \end{align*}
    which leads to a contradiction.

    Then from the uniqueness result of Theorem \ref{thmlehups}, we get the extremal functions of \eqref{lehi} must be the form $cU_\lambda(x)$
    for $c\in\mathbb{R}$ and $\lambda>0$, where $U_\lambda(x)=\lambda^{\frac{N-4-\gamma}{2}}U(\lambda x)$ is given in \eqref{defula}. Therefore, putting $U$ into \eqref{lehi} as a test function, we can directly obtain the explicit form of sharp constant as
    \begin{align*}
    \mathcal{S}_{N,\gamma}=\left(\frac{2}{2+\gamma}\right)
    ^{\frac{2(2+\gamma)}{N+\gamma}-4}
    \left(\frac{2\pi^{\frac{N}{2}}}{\Gamma(\frac{N}{2})}\right)
    ^{\frac{2(2+\gamma)}{N+\gamma}}
    \mathcal{B}\left(\frac{2(N+\gamma)}{2+\gamma}\right),
    \end{align*}
    where $\mathcal{B}(M)=(M-4)(M-2)M(M+2)
    \left[\Gamma^2(\frac{M}{2})/(2\Gamma(M))\right]^{\frac{4}{M}}$, and $\Gamma$ is the Gamma function.
    Now, the proof of Theorem \ref{corolehups} is completed.
    \qed

\section{{\bfseries Sharp weighted Rellich-Sobolev inequality}}\label{sectswrsi}

Let us recall a crucial Rellich-Sobolev type inequality with explicit form of extremal functions which was established by Dan, Ma and Yang \cite{DMY20} as the following.

    \begin{theorem}\label{thmrsi}
    Let $N\geq 5$ and $0<\mu<N-4$. For all $u\in \mathcal{D}^{2,2}_0(\mathbb{R}^N)=\{u\in C^\infty_0(\mathbb{R}^N): \int_{\mathbb{R}^N}|\Delta u|^2 \mathrm{d}x<\infty,\ \int_{\mathbb{R}^N}|u|^{\frac{2N}{N-4}} \mathrm{d}x<\infty\}$,
    \begin{align*}
    & \int_{\mathbb{R}^N}|\Delta u|^2 \mathrm{d}x
    -C_{\mu,1}\int_{\mathbb{R}^N}\frac{|\nabla u|^2}{|x|^2} \mathrm{d}x
    +C_{\mu,2}\int_{\mathbb{R}^N}\frac{|u|^2}{|x|^4} \mathrm{d}x
    \\
    & \geq \left(1-\frac{\mu}{N-4}\right)^{4-\frac{4}{N}}\mathcal{S}_0
    \left(\int_{\mathbb{R}^N}|u|^{\frac{2N}{N-4}} \mathrm{d}x\right)^\frac{N-4}{N},
    \end{align*}
    where $\mathcal{S}_0$ is the sharp constant of classical second-order Sobolev inequality, and
    \begin{align*}
    C_{\mu,1}:& =\frac{N^2-4N+8}{2(N-4)^2}\mu[2(N-4)-\mu];
    \\
    C_{\mu,2}:& =\frac{N^2}{16(N-4)^2}\mu^2[2(N-4)-\mu]^2
    -\frac{N-2}{2}\mu[2(N-4)-\mu].
    \end{align*}
    Moreover, equality holds if and only if
    \begin{align*}
    u(x)=A|x|^{-\frac{\mu}{2}}
    \left(\lambda+|x|^{2(1-\frac{\mu}{N-4})}
    \right)^{-\frac{N-4}{2}},
    \end{align*}
    for $A\in\mathbb{R}$ and $\lambda>0$.
    \end{theorem}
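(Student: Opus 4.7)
My plan is to follow the sketch given in Remark \ref{remkspwrsi} and reduce the theorem to the radial sharp second-order CKN inequality of Theorem \ref{corolehups} via a two-step transformation combined with a spherical harmonic decomposition. The first step is the change of variable
\begin{equation*}
u(x)=|x|^{\vartheta}v(|x|^{\zeta-1}x),\qquad \vartheta=\frac{\gamma-\mu}{2},\qquad \zeta=\frac{N-4-\mu}{N-4-\gamma},
\end{equation*}
which is calibrated so that $\vartheta+\zeta\cdot\frac{N-4-\gamma}{2}=\frac{N-4-\gamma}{2}$, making the exponent $2^{**}_{0,\gamma}$ invariant under the transformation. A direct computation in polar coordinates (using $|y|=|x|^\zeta$ when $y=|x|^{\zeta-1}x$ and the Jacobian $\zeta|x|^{(\zeta-1)N}$) yields the scaling identity for the right-hand side,
\begin{equation*}
\int_{\mathbb{R}^N}|x|^{\gamma}|u|^{2^{**}_{0,\gamma}}\mathrm{d}x=\zeta^{-1}\int_{\mathbb{R}^N}|x|^{\gamma}|v|^{2^{**}_{0,\gamma}}\mathrm{d}x,
\end{equation*}
so the task reduces to the weighted Rellich lower bound
\begin{equation*}
\int_{\mathbb{R}^N}\frac{|\Delta u|^2}{|x|^{\gamma}}\mathrm{d}x-C_{\gamma,\mu,1}\int_{\mathbb{R}^N}\frac{|\nabla u|^2}{|x|^{\gamma+2}}\mathrm{d}x+C_{\gamma,\mu,2}\int_{\mathbb{R}^N}\frac{|u|^2}{|x|^{\gamma+4}}\mathrm{d}x\geq \zeta^3\int_{\mathbb{R}^N}\frac{|\Delta v|^2}{|x|^{\gamma}}\mathrm{d}x.
\end{equation*}

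To establish this inequality the second step is the standard spherical harmonic decomposition. Writing $x=r\sigma$ with $r=|x|$ and $\sigma\in S^{N-1}$, and expanding $v(r,\sigma)=\sum_{k\geq 0}v_k(r)\Phi_k(\sigma)$ where $-\Delta_{S^{N-1}}\Phi_k=\lambda_k\Phi_k$ with $\lambda_k=k(N-2+k)$, the companion expansion of $u$ has mode components $u_k(r)=r^{\vartheta}v_k(r^{\zeta})$. Since $\Delta u=u_{rr}+\tfrac{N-1}{r}u_r+\tfrac{1}{r^2}\Delta_{S^{N-1}}u$, orthogonality of the $\Phi_k$ on $S^{N-1}$ reduces each of the three integrals on the left to a sum over $k$ of one-dimensional radial integrals in $r$; each summand involves only $u_k$, $u_k'$, $u_k''$ and the eigenvalue $\lambda_k$. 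I then convert the radial integrals in $r$ into integrals in the new radial variable $s=r^{\zeta}$ using the relations obtained by differentiating $u_k(r)=r^{\vartheta}v_k(r^{\zeta})$, so the whole inequality splits mode by mode into a one-dimensional quadratic inequality for $v_k$, coupled only through the common parameter $k$ (equivalently $\lambda_k$).

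The core technical step, and I expect the main obstacle, is to verify that for each $k\geq 0$ this one-dimensional inequality is a true pointwise quadratic-form inequality between the two sides. Concretely, after expansion one obtains on the left a quadratic form in $(v_k'',v_k',v_k)$ with coefficients depending rationally on $N,\gamma,\mu,\lambda_k$, and on the right the form corresponding to $\zeta^3$ times the $k$-th mode of $\int|\Delta v|^2/|x|^\gamma\,\mathrm{d}x$. The specific expressions for $C_{\gamma,\mu,1}$ and $C_{\gamma,\mu,2}$ in \eqref{RSisc} are exactly those for which the difference of the two sides is a nonnegative combination of $\int r^{N-5-\gamma}(v_k')^2\mathrm{d}r$ and $\int r^{N-7-\gamma}v_k^2\mathrm{d}r$ with coefficients that are strictly positive for every $k\geq 1$ and vanish for $k=0$, so that equality on each mode forces $v_k\equiv 0$ for $k\geq 1$. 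This is verified via the 1D Hardy-type identity obtained by integrating by parts $\int r^{N-5-\gamma}(v_k')^2\mathrm{d}r$ against appropriate weights together with the eigenvalue identity $\lambda_k\geq \lambda_1=N-1>0$. Once this is done, the inequality collapses to the radial statement on $v$, equality forces $v$ to be radial (equivalently $u$ is radial), and by Theorem \ref{corolehups} applied to $v$ we obtain the announced bound with sharp constant $\zeta^{3+2/2^{**}_{0,\gamma}}\mathcal{S}_{N,\gamma}$.

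Finally, equality characterization comes from the extremals of Theorem \ref{corolehups}: $v$ must equal $AU_\lambda$ for some $A\in\mathbb{R}$ and $\lambda>0$. Unwinding $u(x)=|x|^{\vartheta}v(|x|^{\zeta-1}x)$ with $\vartheta=(\gamma-\mu)/2$ and $\zeta=(N-4-\mu)/(N-4-\gamma)$ gives, after absorbing $\lambda$ into a new positive parameter,
\begin{equation*}
u(x)=A|x|^{-\frac{\mu-\gamma}{2}}\Bigl(\lambda+|x|^{\frac{(2+\gamma)(N-4-\mu)}{N-4-\gamma}}\Bigr)^{-\frac{N-4-\gamma}{2+\gamma}},
\end{equation*}
which is exactly the claimed family of extremals, completing the proof.
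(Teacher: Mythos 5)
Your plan is essentially the paper's own route (which follows Dan--Ma--Yang and is carried out in detail for the weighted generalization, Theorem \ref{thmwrsi}): the substitution $u(x)=|x|^{\vartheta}v(|x|^{\zeta-1}x)$, the spherical harmonic decomposition with an exact identity in the radial mode and strictly positive extra coefficients for $k\ge 1$ (the content of Lemmas \ref{lemre}--\ref{lemtl}), reduction to the sharp radial inequality, and unwinding the extremals, so the argument is correct in structure and sharpness. Just note that for the statement at hand $\gamma=0$, so the final input is the classical sharp second-order Sobolev inequality with its radial extremals (the constant $\mathcal{S}_0$) rather than Theorem \ref{corolehups} (stated for $-2<\gamma<0$), and two incidental slips should be fixed: the calibration identity is $\vartheta=(\zeta-1)\tfrac{N-4-\gamma}{2}$, and the one-dimensional remainder terms carry the weights $r^{N-3-\mu}(f_k')^2$ and $r^{N-5-\mu}f_k^2$ (equivalently $t^{N-3-\gamma}$, $t^{N-5-\gamma}$ in the new radial variable), not the exponents you wrote.
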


    Let us make a brief comment about the proof of Theorem \ref{thmrsi}. The key step is making the change of variable
    \begin{align*}
    u(x)=|x|^{-\frac{\mu}{2}}v(|x|^{-\frac{\mu}{N-4}}x).
    \end{align*}
    Then by using standard spherical decomposition which needs lots of careful calculations,
    \begin{align*}
    \int_{\mathbb{R}^N}|\Delta u|^2 \mathrm{d}x
    -C_{\mu,1}\int_{\mathbb{R}^N}\frac{|\nabla u|^2}{|x|^2} \mathrm{d}x
    +C_{\mu,2}\int_{\mathbb{R}^N}\frac{|u|^2}{|x|^4} \mathrm{d}x\geq \left(1-\frac{\mu}{N-4}\right)^3\int_{\mathbb{R}^N}|\Delta v|^2 \mathrm{d}x,
    \end{align*}
    and the equality holds if and only if $v$ is radially symmetry, so does $u$. Furthermore,
    \begin{align*}
    \int_{\mathbb{R}^N}|u|^{\frac{2N}{N-4}} \mathrm{d}x
    =\left(1-\frac{\mu}{N-4}\right)^{-1}\int_{\mathbb{R}^N}
    |v|^{\frac{2N}{N-4}} \mathrm{d}x.
    \end{align*}
    Then the result follows directly from the second-order Sobolev inequality.
    In this section, following the arguments as those established by Dan et al. \cite{DMY20}, we will extend the result of Theorem \ref{thmrsi} to weighted one shown as in Theorem \ref{thmwrsi}.

    For simplicity, we set $r=|x|$. A function $u$ is said to be radial if $u$ depends only on $r$. In terms of the polar coordinate we write the Laplacian $\Delta$ as
    \[
    \Delta=\frac{\partial^2}{\partial r^2}+\frac{N-1}{r}\frac{\partial}{\partial r}+\frac{1}{r^2}\Delta_{\mathbb{S}^{N-1}},
    \]
    where $\Delta_{\mathbb{S}^{N-1}}$ is the Laplace-Beltrami operator on unit sphere $\mathbb{S}^{N-1}$. Firstly, we have the following:

    \begin{lemma}\label{lemre}
    Assume $N\geq 5$, $-2<\gamma\leq 0$ and $\gamma<\mu<N-4$. Let $t=r^{\frac{N-4-\mu}{N-4-\gamma}}$, then for all $u\in C^\infty_0(\mathbb{R}^N)$,
    \begin{align}\label{re}
    & \int_{\mathbb{R}^N}|x|^{-\mu}\left|\Delta u
    -\frac{(N-2)(\mu-\gamma)}{(N-4-\gamma)r}\frac{\partial u}{\partial r}
    -\frac{(\mu-\gamma)[2(N-4)-\gamma-\mu]}{(N-4-\gamma)^2}\frac{1}{r^2}
    \Delta_{\mathbb{S}^{N-1}} u\right|^2 \mathrm{d}x
    \nonumber\\
    & = \left(\frac{N-4-\mu}{N-4-\gamma}\right)^3
    \int_{\mathbb{S}^{N-1}}\int^\infty_0\left|
    \frac{\partial^2u}{\partial t^2}+\frac{N-1}{t}\frac{\partial u}{\partial t}
    +\frac{1}{t^2}
    \Delta_{\mathbb{S}^{N-1}} u\right|^2 t^{N-1-\gamma}\mathrm{d}t\mathrm{d}\sigma.
    \end{align}
    \end{lemma}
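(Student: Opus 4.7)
\textbf{Proof plan for Lemma \ref{lemre}.} The plan is to set $\zeta := \frac{N-4-\mu}{N-4-\gamma}\in(0,1)$ and establish the claim as a \emph{pointwise} identity, which turns the integral identity into a routine change of variables. Writing $u$ in polar coordinates $(r,\omega)$ and introducing the new radial variable $t=r^{\zeta}$, I will set $\tilde u(t,\omega):=u(t^{1/\zeta}\omega)$. Since the angular coordinate is unchanged, $\Delta_{\mathbb{S}^{N-1}}u=\Delta_{\mathbb{S}^{N-1}}\tilde u$, so the only work is on the radial part.

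First I would compute
\[
\frac{\partial u}{\partial r}=\zeta\, r^{\zeta-1}\frac{\partial \tilde u}{\partial t},\qquad
\frac{\partial^{2} u}{\partial r^{2}}=\zeta(\zeta-1)\,r^{\zeta-2}\frac{\partial \tilde u}{\partial t}+\zeta^{2} r^{2\zeta-2}\frac{\partial^{2}\tilde u}{\partial t^{2}},
\]
and therefore
\[
\Delta u=\zeta^{2} r^{2\zeta-2}\frac{\partial^{2}\tilde u}{\partial t^{2}}+\zeta(\zeta+N-2)\, r^{\zeta-2}\frac{\partial \tilde u}{\partial t}+\frac{1}{r^{2}}\Delta_{\mathbb{S}^{N-1}} u.
\]
Using $\frac{1}{t}\frac{\partial \tilde u}{\partial t}=\frac{1}{\zeta\, r^{\zeta}}\cdot\frac{1}{\zeta r^{\zeta-1}}\frac{\partial u}{\partial r}$ and $t^{-2}=r^{-2\zeta}$, I will rewrite the target right-hand side as
\[
\zeta^{2} r^{2\zeta-2}\!\left[\frac{\partial^{2}\tilde u}{\partial t^{2}}+\frac{N-1}{t}\frac{\partial \tilde u}{\partial t}+\frac{1}{t^{2}}\Delta_{\mathbb{S}^{N-1}}\tilde u\right]
=\zeta^{2} r^{2\zeta-2}\frac{\partial^{2}\tilde u}{\partial t^{2}}+\zeta^{2}(N-1)r^{\zeta-2}\frac{\partial \tilde u}{\partial t}+\zeta^{2}r^{-2}\Delta_{\mathbb{S}^{N-1}}\tilde u.
\]
Subtracting this from $\Delta u$ and using $\zeta\, r^{\zeta-2}\frac{\partial \tilde u}{\partial t}=\frac{1}{r}\frac{\partial u}{\partial r}$ yields the pointwise identity
\[
\Delta u-(N-2)(1-\zeta)\frac{1}{r}\frac{\partial u}{\partial r}-(1-\zeta^{2})\frac{1}{r^{2}}\Delta_{\mathbb{S}^{N-1}} u
=\zeta^{2} r^{2\zeta-2}\!\left[\frac{\partial^{2}\tilde u}{\partial t^{2}}+\frac{N-1}{t}\frac{\partial \tilde u}{\partial t}+\frac{1}{t^{2}}\Delta_{\mathbb{S}^{N-1}}\tilde u\right].
\]
A short algebraic check then identifies the coefficients with those appearing in the lemma: $(N-2)(1-\zeta)=\frac{(N-2)(\mu-\gamma)}{N-4-\gamma}$ and $1-\zeta^{2}=(1-\zeta)(1+\zeta)=\frac{(\mu-\gamma)[2(N-4)-\gamma-\mu]}{(N-4-\gamma)^{2}}$.

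Finally I would square both sides, multiply by $|x|^{-\mu}=r^{-\mu}$, and integrate with $dx=r^{N-1}\,dr\,d\sigma$. The right-hand side picks up the weight $\zeta^{4}\, r^{4\zeta-4-\mu+N-1}\,dr\,d\sigma$; substituting $t=r^{\zeta}$, so that $dr=\zeta^{-1}t^{1/\zeta-1}dt$ and exploiting the key cancellation $(N-4-\mu)/\zeta=N-4-\gamma$, this weight collapses exactly to $\zeta^{3}\,t^{N-1-\gamma}\,dt\,d\sigma$, producing the claimed factor. Since the whole argument is pointwise plus a single change of variables, there is no real analytic obstacle; the only delicate point is the coefficient bookkeeping, and the identity $1-\zeta^{2}=(1-\zeta)(1+\zeta)$ is what makes the angular coefficient come out so cleanly.
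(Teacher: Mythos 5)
Your proposal is correct and follows essentially the same route as the paper: the substitution $t=r^{\zeta}$ with $\zeta=\frac{N-4-\mu}{N-4-\gamma}$, the polar-coordinate expansion of $\Delta u$, and the matching of the radial and angular coefficients $(N-2)(1-\zeta)$ and $1-\zeta^{2}$, together with the exponent cancellation $(N-4-\mu)/\zeta=N-4-\gamma$ that produces the weight $t^{N-1-\gamma}$ and the factor $\zeta^{3}$. The only difference is organizational (you verify a pointwise identity with the given coefficients, while the paper solves for $\zeta,\alpha_1,\alpha_2$ from the matching conditions), and your bookkeeping confirms the coefficient $\frac{(N-2)(\mu-\gamma)}{N-4-\gamma}$ stated in the lemma.
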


    \begin{proof}
    Set $t=r^{\zeta}$. Then
    \[
    \frac{\partial u}{\partial r}=\zeta r^{\zeta-1}\frac{\partial u}{\partial t},\quad \frac{\partial^2 u}{\partial r^2}=\zeta(\zeta-1)r^{\zeta-2}\frac{\partial u}{\partial t}+\zeta^2 r^{2\zeta-2}\frac{\partial^2 u}{\partial t^2}.
    \]
    A simple calculation shows, for $\zeta>0$ and $\alpha_1,\alpha_2\in\mathbb{R}$,
    \begin{align*}
    & \int_{\mathbb{R}^N}|x|^{-\mu}\left|\Delta u
    -\frac{\alpha_1}{r}\frac{\partial u}{\partial r}
    -\frac{\alpha_2}{r^2}
    \Delta_{\mathbb{S}^{N-1}} u\right|^2 \mathrm{d}x
    \\
    & =
    \int_{\mathbb{S}^{N-1}}\int^\infty_0\left|
    \frac{\partial^2 u}{\partial r^2}+\frac{N-1-\alpha_1}{r}\frac{\partial u}{\partial r}
    +\frac{1-\alpha_2}{r^2}
    \Delta_{\mathbb{S}^{N-1}} u\right|^2 r^{N-1-\mu}\mathrm{d}r\mathrm{d}\sigma \\
    & = \zeta^4\int_{\mathbb{S}^{N-1}}\int^\infty_0\left|
    \frac{\partial^2 u}{\partial t^2}+\frac{N+\zeta-2-\alpha_1}{\zeta r}\frac{\partial u}{\partial t}
    +\frac{1-\alpha_2}{\zeta^2r^2}
    \Delta_{\mathbb{S}^{N-1}} u\right|^2 r^{N-1-\mu}\mathrm{d}r\mathrm{d}\sigma \\
    & = \zeta^3\int_{\mathbb{S}^{N-1}}\int^\infty_0\left|
    \frac{\partial^2 u}{\partial t^2}+\frac{N+\zeta-2-\alpha_1}{\zeta t}\frac{\partial u}{\partial t}
    +\frac{1-\alpha_2}{\zeta^2r^2}
    \Delta_{\mathbb{S}^{N-1}} u\right|^2 t^{\frac{N+3\zeta-4-\mu}{\zeta}}\mathrm{d}t\mathrm{d}\sigma.
    \end{align*}
    Choosing $\zeta$, $\alpha_1$, $\alpha_2$ such that
    \[
    \frac{N+\zeta-2-\alpha_1}{\zeta}=N-1,\quad \frac{1-\alpha_2}{\zeta^2}=1,\quad \frac{N+3\zeta-4-\mu}{\zeta}=N-1-\gamma.
    \]
    We obtain
    \[
    \zeta=\frac{N-4-\mu}{N-4-\gamma},\quad \alpha_1=\frac{(N-2)(N-\gamma)}{N-4-\gamma},\quad \alpha_2=\frac{(\mu-\gamma)[2(N-4)-\gamma-\mu]}{(N-4-\gamma)^2}.
    \]
    Therefore, \eqref{re} holds.
    \end{proof}

    A key step of proving Theorem \ref{thmwrsi} is the following inequality:
    \begin{lemma}\label{lemtl}
    Assume $N\geq 5$, $-2<\gamma\leq 0$ and $\gamma< \mu<N-4$. Then for all $u\in C^\infty_0(\mathbb{R}^N)$,
    \begin{align}\label{tl}
    & \int_{\mathbb{R}^N}|x|^{-\mu}\left|\Delta u
    -\frac{(N-2)(\mu-\gamma)}{(N-4-\gamma)r}\frac{\partial u}{\partial r}
    -\frac{(\mu-\gamma)[2(N-4)-\gamma-\mu]}{(N-4-\gamma)^2}\frac{1}{r^2}
    \Delta_{\mathbb{S}^{N-1}} u\right|^2 \mathrm{d}x
    \nonumber\\
    & \leq \int_{\mathbb{R}^N}\frac{|\Delta (|x|^{\vartheta}u)|^2}{|x|^{\gamma}} \mathrm{d}x
    -C_{\gamma,\mu,1}\int_{\mathbb{R}^N}\frac{|\nabla (|x|^{\vartheta}u)|^2}{|x|^{\gamma+2}} \mathrm{d}x
    +C_{\gamma,\mu,2}\int_{\mathbb{R}^N}\frac{||x|^{\vartheta}u|^2}
    {|x|^{\gamma+4}} \mathrm{d}x,
    \end{align}
    where $\vartheta=\frac{\gamma-\mu}{2}$, and $C_{\gamma,\mu,1}$, $C_{\gamma,\mu,2}$ are given in \eqref{RSisc}. Furthermore, equality holds if and only if $u$ is radially symmetry.
    \end{lemma}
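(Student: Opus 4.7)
My plan is to prove \eqref{tl} by the standard spherical harmonic decomposition, reducing it to a family of one-dimensional inequalities indexed by the angular mode $k$. Writing $u(r,\sigma)=\sum_{k\ge 0}u_k(r)Y_k(\sigma)$ with $-\Delta_{\mathbb{S}^{N-1}}Y_k=\lambda_k Y_k$, $\lambda_k=k(N-2+k)$, and setting $v_k(r)=r^{\vartheta}u_k(r)$ (consistent with $v=|x|^{\vartheta}u=\sum v_kY_k$), orthogonality on $\mathbb{S}^{N-1}$ decouples every integral in \eqref{tl} into a sum of one-dimensional integrals on $(0,\infty)$. On the LHS, substituting $u_k=r^{-\vartheta}v_k$ converts the twisted Laplacian applied to $u_kY_k$ into $r^{-\vartheta}\bigl[v_k''+a_1 r^{-1}v_k'+(b_0+c_0\lambda_k)r^{-2}v_k\bigr]Y_k$ for explicit constants $a_1,b_0,c_0$ depending only on $N,\gamma,\mu$; the factor $r^{-2\vartheta}$ arising from squaring combines with the weight $r^{-\mu}$ to produce $r^{N-1-\gamma}$, matching the natural weight on the RHS since $\mu+2\vartheta=\gamma$.

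Next, I will expand the squared differential expressions on both sides and integrate by parts (boundary terms vanish by compact support) using the identities $\int r^{\alpha}v_k''v_k'\,dr=-\tfrac{\alpha}{2}\int r^{\alpha-1}(v_k')^2\,dr$, $\int r^{\alpha}v_k''v_k\,dr=-\int r^{\alpha}(v_k')^2\,dr+\tfrac{\alpha(\alpha-1)}{2}\int r^{\alpha-2}v_k^2\,dr$, and $\int r^{\alpha}v_kv_k'\,dr=-\tfrac{\alpha}{2}\int r^{\alpha-1}v_k^2\,dr$, so as to eliminate every occurrence of $v_k''$. After collecting terms, the net outcome takes the form
\[
\text{RHS}_k-\text{LHS}_k=\alpha_k\int_0^\infty r^{N-3-\gamma}(v_k')^2\,dr+\beta_k\int_0^\infty r^{N-5-\gamma}v_k^2\,dr,
\]
where $\alpha_k,\beta_k$ are affine functions of $\lambda_k$ whose coefficients are rational expressions in $N,\gamma,\mu$. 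The constants $C_{\gamma,\mu,1}$ and $C_{\gamma,\mu,2}$ in \eqref{RSisc} are calibrated exactly so that $\alpha_0=\beta_0=0$, which forces $\text{RHS}_0-\text{LHS}_0\equiv 0$ and thereby confirms that the radial mode contributes equality.

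Finally, for $k\ge 1$, using $\lambda_k\ge N-1$ and the parameter hypotheses $-2<\gamma\le 0$ and $\gamma<\mu<N-4$, I will verify that either both $\alpha_k,\beta_k\ge 0$ with at least one strict, or, if one coefficient is negative, that the one-dimensional weighted Hardy inequality
\[
\int_0^\infty r^{N-3-\gamma}(v_k')^2\,dr\ge\Bigl(\tfrac{N-4-\gamma}{2}\Bigr)^{2}\int_0^\infty r^{N-5-\gamma}v_k^2\,dr
\]
absorbs the negative part to make the combination strictly positive whenever $v_k\not\equiv 0$. Summing $\sum_{k}(\text{RHS}_k-\text{LHS}_k)\ge 0$ then yields \eqref{tl}, and equality forces $v_k\equiv 0$ for every $k\ge 1$, i.e., $u$ is radially symmetric. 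The main obstacle will be the delicate algebraic bookkeeping showing that the prescribed $C_{\gamma,\mu,1},C_{\gamma,\mu,2}$ in \eqref{RSisc} simultaneously annihilate $\alpha_0$ and $\beta_0$, and the subsequent sign analysis of $\alpha_k,\beta_k$ for $k\ge 1$ uniformly over the admissible range of $(\gamma,\mu)$.
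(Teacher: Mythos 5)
Your plan coincides with the paper's proof: both use the spherical harmonic decomposition, reduce \eqref{tl} to one-dimensional weighted integrals mode by mode, observe that $C_{\gamma,\mu,1},C_{\gamma,\mu,2}$ are calibrated exactly so that the radial mode $k=0$ gives identity, and then show the $k\ge 1$ modes contribute strictly positive remainders (the paper finds both one-dimensional coefficients nonnegative directly, so your Hardy-absorption fallback is never needed), with equality forcing all nonradial modes to vanish. One minor inaccuracy: the coefficient of the zeroth-order term is quadratic, not affine, in $\lambda_k$, since the squared angular terms on the two sides carry factors $1$ and $\zeta^4$ with $\zeta=\frac{N-4-\mu}{N-4-\gamma}\in(0,1)$; as this quadratic contribution has a positive coefficient, it only helps the sign analysis and does not affect the argument.
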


    Before proving Lemma \ref{lemtl}, we need the following lemma established in \cite[Lemma 2.3]{DMY20}:
    \begin{lemma}\label{lemtle}
    Let $N\geq 5$, $\mu<N-4$ and $f\in C^\infty_0([0,\infty))$. There holds, for $\beta_i\in\mathbb{R}$, $1\leq i\leq 4$,
    \begin{align*}
    & \int^\infty_0\left(f''(r)+\frac{\beta_1}{r}f'(r)
    +\frac{\beta_2}{r^2}f(r)\right)^2 r^{N-1-\mu}\mathrm{d}r
    -\int^\infty_0\left(f''(r)+\frac{\beta_3}{r}f'(r)
    +\frac{\beta_4}{r^2}f(r)\right)^2 r^{N-1-\mu}\mathrm{d}r
    \\
    & =A_1\int^\infty_0\left(f'(r)\right)^2 r^{N-3-\mu}\mathrm{d}r
    +A_2\int^\infty_0f^2(r)r^{N-5-\mu}\mathrm{d}r,
    \end{align*}
    where
    \begin{align*}
    & A_1=\beta_1^2-\beta_3^2-(\beta_1-\beta_3)(N-2-\mu)-2(\beta_2-\beta_4);
    \\
    & A_2=\beta_2^2-\beta_4^2+(\beta_2-\beta_4)(N-3-\mu)(N-4-\mu)
    -(\beta_1\beta_2-\beta_3\beta_4)(N-4-\mu).
    \end{align*}
    \end{lemma}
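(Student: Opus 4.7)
The identity is a purely one-dimensional computation: expand the difference of the two squares inside the integrand, use an algebraic identity to simplify the mixed coefficient, and then integrate by parts to eliminate the second derivative $f''$, so that only weighted $L^2$ norms of $f'$ and $f$ remain.

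The first step is to write the integrand of the left-hand side as a difference of squares and factor
\[
\left(f''+\tfrac{\beta_1}{r}f'+\tfrac{\beta_2}{r^2}f\right)^2-\left(f''+\tfrac{\beta_3}{r}f'+\tfrac{\beta_4}{r^2}f\right)^2 = (A-B)(A+B),
\]
where the $(f'')^2$ terms cancel automatically. Explicitly $A-B = \tfrac{\beta_1-\beta_3}{r}f' + \tfrac{\beta_2-\beta_4}{r^2}f$ and $A+B = 2f''+\tfrac{\beta_1+\beta_3}{r}f'+\tfrac{\beta_2+\beta_4}{r^2}f$. Expanding the product and multiplying by $r^{N-1-\mu}$ produces five types of terms: $f'f''r^{N-2-\mu}$, $(f')^2 r^{N-3-\mu}$, $ff'r^{N-4-\mu}$, $ff''r^{N-3-\mu}$, and $f^2 r^{N-5-\mu}$. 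The cross coefficient on $ff'r^{N-4-\mu}$ combines via the identity $(\beta_1-\beta_3)(\beta_2+\beta_4)+(\beta_2-\beta_4)(\beta_1+\beta_3)=2(\beta_1\beta_2-\beta_3\beta_4)$, which is what ultimately produces the $\beta_1\beta_2-\beta_3\beta_4$ factor in $A_2$.

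The second step is three integrations by parts. Using $2f'f''=\tfrac{d}{dr}(f')^2$ gives
\[
\int_0^\infty 2f'f''\,r^{N-2-\mu}\,dr=-(N-2-\mu)\int_0^\infty (f')^2 r^{N-3-\mu}\,dr.
\]
Similarly, integrating $\int_0^\infty ff''\,r^{N-3-\mu}\,dr$ by parts yields $-\int_0^\infty(f')^2 r^{N-3-\mu}\,dr$ plus a term $-(N-3-\mu)\int_0^\infty ff'r^{N-4-\mu}\,dr$, and a further integration by parts using $2ff'=\tfrac{d}{dr}f^2$ converts $\int_0^\infty ff' r^{N-4-\mu}\,dr$ into $-\tfrac{N-4-\mu}{2}\int_0^\infty f^2 r^{N-5-\mu}\,dr$. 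All boundary contributions at $r=\infty$ vanish by compact support; at $r=0$ they vanish because $f\in C^\infty_0([0,\infty))$ together with the bound $\mu<N-4<N-2$ ensures the weight $r^{N-2-\mu}$ (and all higher-order ones appearing after differentiation) is integrable near the origin and the relevant product tends to zero there.

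The final step is bookkeeping: collect the coefficients of $\int_0^\infty(f')^2 r^{N-3-\mu}\,dr$ and $\int_0^\infty f^2 r^{N-5-\mu}\,dr$ and check that they reduce to $A_1$ and $A_2$ as stated. The coefficient of $(f')^2$ comes from the $(\beta_1^2-\beta_3^2)$ term, the $-2(\beta_2-\beta_4)$ contribution from integrating $ff''$ by parts, and the $-(\beta_1-\beta_3)(N-2-\mu)$ contribution from integrating $f'f''$ by parts. The coefficient of $f^2$ combines $\beta_2^2-\beta_4^2$ with the factor $(N-3-\mu)(N-4-\mu)(\beta_2-\beta_4)$ from the double integration on $ff''$, together with $-(\beta_1\beta_2-\beta_3\beta_4)(N-4-\mu)$ from the $ff'$ cross term. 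The only part that requires real care is tracking signs through the two successive integrations by parts and making sure boundary contributions at $r=0$ are justified, but once these are set up the identity follows by direct verification.
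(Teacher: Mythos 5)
Your computation is correct. Note that the paper does not actually prove this lemma: it is quoted from \cite[Lemma 2.3]{DMY20}, so there is no internal proof to compare against; your argument is exactly the standard verification one would expect there. Expanding the difference of squares leaves the five cross terms you list (the $(f'')^2$ parts cancelling), and the three integrations by parts
\[
\int_0^\infty 2f'f''\,r^{N-2-\mu}\,\mathrm{d}r=-(N-2-\mu)\int_0^\infty (f')^2 r^{N-3-\mu}\,\mathrm{d}r,
\]
\[
\int_0^\infty 2ff''\,r^{N-3-\mu}\,\mathrm{d}r=-2\int_0^\infty (f')^2 r^{N-3-\mu}\,\mathrm{d}r+(N-3-\mu)(N-4-\mu)\int_0^\infty f^2 r^{N-5-\mu}\,\mathrm{d}r,
\]
\[
\int_0^\infty 2ff'\,r^{N-4-\mu}\,\mathrm{d}r=-(N-4-\mu)\int_0^\infty f^2 r^{N-5-\mu}\,\mathrm{d}r,
\]
combined with the cross-coefficient identity $(\beta_1-\beta_3)(\beta_2+\beta_4)+(\beta_2-\beta_4)(\beta_1+\beta_3)=2(\beta_1\beta_2-\beta_3\beta_4)$, reproduce precisely $A_1$ and $A_2$. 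Your justification of the boundary terms is also the right one: since $f$ need not vanish at the origin, what matters is that the exponents $N-2-\mu$, $N-3-\mu$, $N-4-\mu$ appearing in the boundary products are all strictly positive under $\mu<N-4$ (the ``integrability of the weight'' remark is superfluous, but the vanishing claim is the correct and sufficient point). Whether one factors as $(A-B)(A+B)$ as you do or expands both squares directly is immaterial; the bookkeeping is identical.
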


    \vskip0.25cm
\noindent{\em \bfseries Proof of Lemma \ref{lemtl}.} Firstly, we decompose $u$ as follows:
    \begin{equation}\label{defvd}
    u(x)=u(r,\sigma)=\sum^{\infty}_{k=0}f_k(r)\Psi_k(\sigma),
    \end{equation}
    where $r=|x|$, $\sigma=\frac{x}{|x|}\in \mathbb{S}^{N-1}$, and
    \begin{equation*}
    f_k(r)=\int_{\mathbb{S}^{N-1}}u(r,\sigma)\Psi_k(\sigma)\mathrm{d}\sigma.
    \end{equation*}
    Here $\Psi_k(\sigma)$ denotes the $k$-th spherical harmonic, i.e., it satisfies
    \begin{equation}\label{deflk}
    -\Delta_{\mathbb{S}^{N-1}}\Psi_k=c_k \Psi_k,
    \end{equation}
    where $c_k$ is the $k$-th eigenvalue of $-\Delta_{\mathbb{S}^{N-1}}$ satisfying $c_k=k(N-2+k)$, $k=0,1,2,\ldots$
    and
    \[\mathrm{Ker}(\Delta_{\mathbb{S}^{N-1}}+c_k)
    =\mathbb{Y}_k(\mathbb{R}^N)|_{\mathbb{S}^{N-1}},
    \]
    where $\mathbb{Y}_k(\mathbb{R}^N)$ is the space of all homogeneous harmonic polynomials of degree $k$ in $\mathbb{R}^N$. It is standard that $c_0=0$ and the corresponding eigenfunction of (\ref{deflk}) is the constant function. Furthermore, the functions $f_k(r)\in C^\infty_0([0,\infty))$ satisfying $f_k(r)=O(r^k)$ and $f_k'(r)=O(r^{k-1})$ as $r\to 0$. We refer to
    \cite[Section 2.2]{TZ07} for details.

    For simplicity, we let
    \[
    \int_{\mathbb{S}^{N-1}}|\Psi_k|^2\mathrm{d}\sigma=1,\quad \mbox{for all}\quad k\geq 0,
    \]
    so that
    \begin{align*}
    & \int_{\mathbb{R}^{N}}|\Delta u|^2\mathrm{d}x
    =\sum^\infty_{k=0}\int^\infty_0\left(\Delta f_k-\frac{c_k}{r^2}f_k\right)^2 r^{N-1}\mathrm{d}r,
    \\
    & \int_{\mathbb{R}^{N}}|\nabla u|^2\mathrm{d}x
    =\sum^\infty_{k=0}\int^\infty_0\left(|f_k'|^2
    +\frac{c_k}{r^2}f_k^2\right) r^{N-1}\mathrm{d}r.
    \end{align*}
    Therefore,
    \begin{align*}
    & \int_{\mathbb{R}^N}|x|^{-\mu}\left|\Delta u
    -\frac{(N-2)(\mu-\gamma)}{(N-4-\gamma)r}\frac{\partial u}{\partial r}
    -\frac{(\mu-\gamma)[2(N-4)-\gamma-\mu]}{(N-4-\gamma)^2}\frac{1}{r^2}
    \Delta_{\mathbb{S}^{N-1}} u\right|^2 \mathrm{d}x
    \\
    & = \sum^\infty_{k=0}\int^\infty_0\left\{f_k''
    +\left[N-1-\frac{(N-2)(\mu-\gamma)}{(N-4-\gamma)}\right]
    \frac{1}{r}f_k'
    -\left(\frac{N-4-\mu}{N-4-\gamma}\right)^2
    \frac{c_k}{r^2}f_k\right\}^2 r^{N-1-\mu}\mathrm{d}r,
    \end{align*}
    and
    \begin{align*}
    & \int_{\mathbb{R}^N}\frac{|\Delta (|x|^{\vartheta}u)|^2}{|x|^{\gamma}} \mathrm{d}x
    -C_{\gamma,\mu,1}\int_{\mathbb{R}^N}\frac{|\nabla (|x|^{\vartheta}u)|^2}{|x|^{\gamma+2}} \mathrm{d}x
    +C_{\gamma,\mu,2}\int_{\mathbb{R}^N}\frac{||x|^{\vartheta}u|^2}
    {|x|^{\gamma+4}} \mathrm{d}x\\
    & = \sum^\infty_{k=0}\int^\infty_0\left[\Delta(r^\vartheta f_k)
    -\frac{c_k}{r^{2}}(r^\vartheta f_k)\right]^2 r^{N-1-\gamma}\mathrm{d}r
    \\
    &\quad -C_{\gamma,\mu,1} \sum^\infty_{k=0}\int^\infty_0\left[|(r^\vartheta f_k)'|^2
    +\frac{c_k}{r^{2}}(r^\vartheta f_k)^2\right]r^{N-3-\gamma}\mathrm{d}r
    + C_{\gamma,\mu,2}\sum^\infty_{k=0}\int^\infty_0(r^\vartheta f_k)^2 r^{N-5-\gamma}\mathrm{d}r.
    \end{align*}
    Here $\vartheta=\frac{\gamma-\mu}{2}$. To finish the proof, it is enough to show
    \begin{align}\label{lte}
    & \int^\infty_0\left|\Delta(r^{\vartheta}f_0)\right|^2 r^{N-1-\gamma}\mathrm{d}r
    -C_{\gamma,\mu,1}\int^\infty_0|(r^{\vartheta}f_0)'|^2
    r^{N-3-\gamma}\mathrm{d}r
    + C_{\gamma,\mu,2}\int^\infty_0(r^{\vartheta}f_0)^2 r^{N-5-\gamma}\mathrm{d}r
    \nonumber\\
    & = \int^\infty_0\left\{f_0''
    +\left[N-1-\frac{(N-2)(\mu-\gamma)}{(N-4-\gamma)}\right]
    \frac{1}{r}f_0'\right\}^2 r^{N-1-\mu}\mathrm{d}r,
    \end{align}
    and for $k\geq 1$ and $f_k\in C^\infty_0([0,\infty))\setminus\{0\}$,
    \begin{align}\label{ltl}
    & \int^\infty_0\left\{f_k''
    +\left[N-1-\frac{(N-2)(\mu-\gamma)}{(N-4-\gamma)}\right]
    \frac{1}{r}f_k'
    -\left(\frac{N-4-\mu}{N-4-\gamma}\right)^2
    \frac{c_k}{r^2}f_k\right\}^2 r^{N-1-\mu}\mathrm{d}r
    \nonumber\\
    & <\int^\infty_0\left[\Delta(r^\vartheta f_k)
    -\frac{c_k}{r^{2}}(r^\vartheta f_k)\right]^2 r^{N-1-\gamma}\mathrm{d}r
    -C_{\gamma,\mu,1} \int^\infty_0\left[|(r^\vartheta f_k)'|^2
    +\frac{c_k}{r^{2}}(r^\vartheta f_k)^2\right]r^{N-3-\gamma}\mathrm{d}r
    \nonumber\\
    &\quad + C_{\gamma,\mu,2}\int^\infty_0(r^\vartheta f_k)^2 r^{N-5-\gamma}\mathrm{d}r.
    \end{align}

    Firstly, we prove \eqref{lte}. By direct calculations we have
    \begin{align}\label{ltec}
    & \int^\infty_0\left|\Delta(r^\vartheta f_0)\right|^2 r^{N-1-\gamma}\mathrm{d}r
    -C_{\gamma,\mu,1}\int^\infty_0|(r^\vartheta f_0)'|^2r^{N-3-\gamma}\mathrm{d}r
    + C_{\gamma,\mu,2}\int^\infty_0(r^\vartheta f_0)^2 r^{N-5-\gamma}\mathrm{d}r
    \nonumber\\
    & = \int^\infty_0\left[f_0''+\frac{N-1+2\vartheta }{r}f_0'
    +\frac{\vartheta (N-2+\vartheta )}{r^2}f_0\right]^2 r^{N-1-\mu}\mathrm{d}r
    -C_{\gamma,\mu,1}\int^\infty_0|f_0'|^2r^{N-3-\mu}\mathrm{d}r
    \nonumber\\
    &\quad + \left[C_{\gamma,\mu,2}+\vartheta (N-4-\mu-\vartheta )C_{\gamma,\mu,1}\right]
    \int^\infty_0|f_0|^2 r^{N-5-\mu}\mathrm{d}r.
    \end{align}
    Furthermore, as in Lemma \ref{lemtle}, we take
    \[
    \beta_1=N-1-\frac{(N-2)(\mu-\gamma)}{(N-4-\gamma)},\quad \beta_2=0,\quad \beta_3=N-1+2\vartheta ,\quad \beta_4=\vartheta (N-2+\vartheta ),
    \]
    then,
    \begin{align}\label{ltet}
    & \int^\infty_0\left\{f_0''
    +\left[N-1-\frac{(N-2)(\mu-\gamma)}{(N-4-\gamma)}\right]
    \frac{1}{r}f_0'\right\}^2 r^{N-1-\mu}\mathrm{d}r
    \nonumber\\
    &\quad -\int^\infty_0\left[f_0''+\frac{N-1+2\vartheta }{r}f_0'
    +\frac{\vartheta (N-2+\vartheta )}{r^2}f_0\right]^2 r^{N-1-\mu}\mathrm{d}r
    \nonumber\\
    & = A_1\int^\infty_0|f_0'|^2 r^{N-3-\mu}\mathrm{d}r
    +A_2\int^\infty_0|f_0|^2 r^{N-5-\mu}\mathrm{d}r,
    \end{align}
    where
    \begin{align*}
    & A_1=2\vartheta (N-2+\vartheta )
    -\left[\frac{(N-2)(\mu-\gamma)}{N-4-\gamma}+2\vartheta \right]
    \left[N+2\vartheta +\mu-\frac{(N-2)(\mu-\gamma)}{N-4-\gamma}\right],
    \\ 
    & A_2=\vartheta (N-2+\vartheta )(N-4-\mu)(2+\mu+2\vartheta )-\vartheta ^2(N-2+\vartheta )^2.
    \end{align*}
    It is easy to verify that
    \begin{align*}
    C_{\gamma,\mu,1}=-A_1,\quad C_{\gamma,\mu,2}=A_2-\vartheta (N-4-\mu-\vartheta )C_{\gamma,\mu,1},
    \end{align*}
    where $C_{\gamma,\mu,1}$, $C_{\gamma,\mu,2}$ are given in \eqref{RSisc}, then by \eqref{ltec} and \eqref{ltet},
    \begin{align*}
    & \int^\infty_0\left\{f_0''
    +\left[N-1-\frac{(N-2)(\mu-\gamma)}{(N-4-\gamma)}\right]
    \frac{1}{r}f_0'\right\}^2 r^{N-1-\mu}\mathrm{d}r
    -\int^\infty_0\left|\Delta(r^\vartheta f_0)\right|^2 r^{N-1-\gamma}\mathrm{d}r
    \\
    & = -C_{\gamma,\mu,1}\int^\infty_0|(r^\vartheta f_0)'|^2r^{N-3-\gamma}\mathrm{d}r
    + C_{\gamma,\mu,2}\int^\infty_0(r^\vartheta f_0)^2 r^{N-5-\gamma}\mathrm{d}r,
    \end{align*}
    thus \eqref{lte} holds.

    Secondly, we prove \eqref{ltl} holds for all $k\geq 1$. By Lemma \ref{lemtle} we have
    \begin{align}\label{ltle}
    & \int^\infty_0\left\{f_k''
    +\left[N-1-\frac{(N-2)(\mu-\gamma)}{(N-4-\gamma)}\right]
    \frac{1}{r}f_k'
    -\left(\frac{N-4-\mu}{N-4-\gamma}\right)^2
    \frac{c_k}{r^2}f_k\right\}^2 r^{N-1-\mu}\mathrm{d}r
    \nonumber\\
    & =\int^\infty_0\left\{f_k''
    +\left[N-1-\frac{(N-2)(\mu-\gamma)}{(N-4-\gamma)}\right]
    \frac{1}{r}f_k'\right\}^2 r^{N-1-\mu}\mathrm{d}r
    \nonumber\\
    &\quad + 2\left(\frac{N-4-\mu}{N-4-\gamma}\right)^2
    \int^\infty_0|f_k'|^2 r^{N-3-\mu}\mathrm{d}r
    \nonumber\\
    &\quad +\left(\frac{N-4-\mu}{N-4-\gamma}\right)^4
    \left[c_k^2+(2+\gamma)(N-4-\gamma)c_k\right]
    \int^\infty_0|f_k|^2 r^{N-5-\mu}\mathrm{d}r,
    \end{align}
    and
    \begin{align}\label{ltle2}
    & \int^\infty_0\left[\Delta(r^\vartheta f_k)
    -\frac{c_k}{r^{2}}(r^\vartheta f_k)\right]^2 r^{N-1-\gamma}\mathrm{d}r
    \nonumber\\
    & =\int^\infty_0\left[f_k''+\frac{N-1+2\vartheta }{r}f_k'
    -\frac{c_k -\vartheta (N-2+\vartheta )}{r^{2}}f_k\right]^2 r^{N-1-\mu}\mathrm{d}r
    \nonumber\\
    & = \int^\infty_0\left[f_k''+\frac{N-1+2\vartheta }{r}f_k'
    +\frac{\vartheta (N-2+\vartheta )}{r^{2}}f_k\right]^2 r^{N-1-\mu}\mathrm{d}r
    +2c_k\int^\infty_0|f_k'|^2 r^{N-3-\mu}\mathrm{d}r
    \nonumber\\
    & \quad + c_k\left[c_k-2\vartheta (N-2+\vartheta )+(N-4-\mu)(2+2\vartheta +\mu)\right]
    \int^\infty_0|f_k|^2 r^{N-5-\mu}\mathrm{d}r.
    \end{align}
    Therefore, by \eqref{ltle}-\eqref{ltle2} and \eqref{lte},
    \begin{align*}
    & \int^\infty_0\left[\Delta(r^\vartheta f_k)
    -\frac{c_k}{r^{2}}(r^\vartheta f_k)\right]^2 r^{N-1-\gamma}\mathrm{d}r
    -C_{\gamma,\mu,1} \int^\infty_0\left[|(r^\vartheta f_k)'|^2
    +\frac{c_k}{r^{2}}(r^\vartheta f_k)^2\right]r^{N-3-\gamma}\mathrm{d}r
    \\
    &\quad + C_{\gamma,\mu,2}\int^\infty_0|f_k|^2 r^{N-5-\gamma+2\vartheta }\mathrm{d}r
    \\
    &\quad-\int^\infty_0\left\{f_k''
    +\left[N-1-\frac{(N-2)(\mu-\gamma)}{(N-4-\gamma)}\right]
    \frac{1}{r}f_k'
    -\left(\frac{N-4-\mu}{N-4-\gamma}\right)^2
    \frac{c_k}{r^2}f_k\right\}^2 r^{N-1-\mu}\mathrm{d}r
    \\
    & =2\left[1-\left(\frac{N-4-\mu}{N-4-\gamma}\right)^2\right]c_k
    \int^\infty_0|f_k'|^2 r^{N-3-\mu}\mathrm{d}r
    +A_{3,k}\int^\infty_0|f_k|^2 r^{N-5-\mu}\mathrm{d}r,
    \end{align*}
    where
    \begin{align*}
    A_{3,k}
    & = c_k\left[c_k-2\vartheta (N-2+\vartheta )+(N-4-\mu)(2+2\vartheta +\mu)\right]
    \\
    &\quad -\left(\frac{N-4-\mu}{N-4-\gamma}\right)^4
    \left[c_k^2+(2+\gamma)(N-4-\gamma)c_k\right]
    -C_{\gamma,\mu,1}\cdot c_k
    \\
    & = \left[1-\left(\frac{N-4-\mu}{N-4-\gamma}\right)^4\right]
    c_k\left[c_k+(2+\gamma)(N-4-\gamma)\right]
    \\
    &\quad +\left[(\mu-\gamma)(N-4-\gamma)-\frac{(\mu-\gamma)^2}{2}
    -C_{\gamma,\mu,1}\right]c_k.
    \end{align*}
    To finish the proof, since $\gamma< \mu<N-4$, it is enough to show $A_{3,k}>0$ for all $k\geq 1$. Set $\zeta=\frac{N-4-\mu}{N-4-\gamma}$. Then $\zeta\in (0,1)$ and
    \begin{align*}
    C_{\gamma,\mu,1}
    & =\frac{N^2-4N+8+\gamma^2+4\gamma}{2(N-4-\gamma)^2}(\mu-\gamma)
    [2(N-4-\gamma)-(\mu-\gamma)]
    \\
    & =\frac{N^2-4N+8+\gamma^2+4\gamma}{2}(1-\zeta^2).
    \end{align*}
    Therefore,
    \begin{align*}
    A_{3,k}
    & = (1-\zeta^4)
    c_k\left[c_k+(2+\gamma)(N-4-\gamma)\right]
    \\
    &\quad +\left[\frac{2(1-\zeta)-(1-\zeta)^2}{2}(N-4-\gamma)^2
    -\frac{N^2-4N+8+\gamma^2+4\gamma}{2}(1-\zeta^2)\right]c_k
    \\
    & = (1-\zeta^4)
    c_k\left[c_k+(2+\gamma)(N-4-\gamma)\right]
    -(1-\zeta^2)(N-2)(2+\gamma)c_k
    \\
    & = (1-\zeta^2)\left\{\zeta^2\left[c_k+(2+\gamma)(N-4-\gamma)\right]
    +[c_k-(2+\gamma)^2]\right\}c_k>0.
    \end{align*}
    Here we use the facts that $N\geq 5$, $c_k=k(N-2+k)\geq 4$ for $k\geq 1$, and $-2<\gamma\leq 0$. The proof of Lemma \ref{lemtl} is thereby completed.
\qed

\vskip0.25cm

Now, we are ready to establish the weighted Rellich-Sobolev inequality shown as in Theorem \ref{thmwrsi}.

\vskip0.25cm

\noindent{\bf \em Proof of Theorem \ref{thmwrsi}}. Let $t=r^{\frac{N-4-\mu}{N-4-\gamma}}$ with $r=|x|$, and
    \[
    v(x)=u(|x|^{\frac{N-4-\mu}{N-4-\gamma}-1}x).
    \]
 By Lemma \ref{lemre} and Theorem \ref{corolehups},
\begin{align}\label{retb}
    & \int_{\mathbb{R}^N}|x|^{-\mu}\left|\Delta u
    -\frac{(N-2)(\mu-\gamma)}{(N-4-\gamma)r}\frac{\partial u}{\partial r}
    -\frac{(\mu-\gamma)[2(N-4)-\gamma-\mu]}{(N-4-\gamma)^2}\frac{1}{r^2}
    \Delta_{\mathbb{S}^{N-1}} u\right|^2 \mathrm{d}x
    \nonumber\\
    & = \left(\frac{N-4-\mu}{N-4-\gamma}\right)^3
    \int_{\mathbb{S}^{N-1}}\int^\infty_0\left|
    \frac{\partial^2u}{\partial t^2}+\frac{N-1}{t}\frac{\partial u}{\partial t}
    +\frac{1}{t^2}
    \Delta_{\mathbb{S}^{N-1}} u\right|^2 t^{N-1-\gamma}\mathrm{d}t\mathrm{d}\sigma
    \nonumber\\
    & = \left(\frac{N-4-\mu}{N-4-\gamma}\right)^3
    \int_{\mathbb{S}^{N-1}}\int^\infty_0\left|
    \frac{\partial^2 v}{\partial t^2}+\frac{N-1}{t}\frac{\partial v}{\partial t}
    +\frac{1}{t^2}
    \Delta_{\mathbb{S}^{N-1}} v\right|^2 t^{N-1-\gamma}\mathrm{d}t\mathrm{d}\sigma
    \nonumber\\
    & \geq \left(\frac{N-4-\mu}{N-4-\gamma}\right)^3\mathcal{S}_{N,\gamma}
    \left(\int_{\mathbb{S}^{N-1}}\int^\infty_0
    |v|^{2^{**}_{0,\gamma}} t^{N-1+\gamma}\mathrm{d}t\mathrm{d}\sigma
    \right)^{\frac{2}{2^{**}_{0,\gamma}}}
    \nonumber\\
    & = \left(\frac{N-4-\mu}{N-4-\gamma}\right)^{3+\frac{2}{2^{**}_{0,\gamma}}}
    \mathcal{S}_{N,\gamma}
    \left(\int_{\mathbb{S}^{N-1}}\int^\infty_0
    |u|^{2^{**}_{0,\gamma}} r^{\frac{(N-4-\mu)(N+\gamma)}{N-4-\gamma}-1}\mathrm{d}r\mathrm{d}\sigma
    \right)^{\frac{2}{2^{**}_{0,\gamma}}}
    \nonumber\\
    & = \left(\frac{N-4-\mu}{N-4-\gamma}\right)^{3+\frac{2}{2^{**}_{0,\gamma}}}
    \mathcal{S}_{N,\gamma}
    \left(\int_{\mathbb{R}^N}
    |u|^{2^{**}_{0,\gamma}} |x|^{\frac{(\gamma-\mu)2^{**}_{0,\gamma}}{2}+\gamma}\mathrm{d}x
    \right)^{\frac{2}{2^{**}_{0,\gamma}}}.
    \end{align}
    Therefore, by Lemma \ref{lemtl} and \eqref{retb},
    \begin{align*}
    & \int_{\mathbb{R}^N}\frac{|\Delta (|x|^\vartheta u)|^2}{|x|^{\gamma}} \mathrm{d}x
    -C_{\gamma,\mu,1}\int_{\mathbb{R}^N}\frac{|\nabla (|x|^\vartheta u)|^2}{|x|^{\gamma+2}} \mathrm{d}x
    +C_{\gamma,\mu,2}\int_{\mathbb{R}^N}\frac{||x|^\vartheta u|^2}{|x|^{\gamma+4}} \mathrm{d}x
    \\
    & \geq \int_{\mathbb{R}^N}|x|^{-\mu}\left|\Delta u
    -\frac{(N-2)(\mu-\gamma)}{(N-4-\gamma)r}\frac{\partial u}{\partial r}
    -\frac{(\mu-\gamma)[2(N-4)-\gamma-\mu]}{(N-4-\gamma)^2}\frac{1}{r^2}
    \Delta_{\mathbb{S}^{N-1}} u\right|^2 \mathrm{d}x
    \\
    & \geq \left(\frac{N-4-\mu}{N-4-\gamma}\right)^{3+\frac{2}{2^{**}_{0,\gamma}}}
    \mathcal{S}_{N,\gamma}
    \left(\int_{\mathbb{R}^N}
    |x|^{\gamma}||x|^\vartheta u|^{2^{**}_{0,\gamma}}\mathrm{d}x
    \right)^{\frac{2}{2^{**}_{0,\gamma}}},
    \end{align*}
    where $\vartheta =\frac{\gamma-\mu}{2}$. Replacing $|x|^\vartheta u$ by $u$, we get the weighted Rellich-Sobolev type inequality \eqref{wrsi}.

    Also by Lemma \ref{lemtl} and Theorem \ref{corolehups}, we know the extremal functions are radially symmetry. Therefore, we have equality in \eqref{wrsi} if and only if
    \begin{align*}
    u(x)=c|x|^{\frac{\gamma-\mu}{2}}
    \left(\lambda+t^{2+\gamma}
    \right)^{-{\frac{N-4-\gamma}{2+\gamma}}}=c|x|^{\frac{\gamma-\mu}{2}}
    \left(\lambda+|x|^{\frac{(2+\gamma)(N-4-\mu)}{N-4-\gamma}}
    \right)^{-{\frac{N-4-\gamma}{2+\gamma}}},
    \end{align*}
    for $A\in\mathbb{R}$ and $\lambda>0$. Now, the proof of Theorem \ref{thmwrsi} is completed.
\qed

\vskip0.25cm

    Passing $\mu\uparrow N-4$ in \eqref{wrsi}, then we obtain a sharp inequality involving Hardy and Rellich terms, which reads that:
    \begin{corollary}\label{coroshri}
    Assume that $N\geq 5$ and $-2<\gamma\leq 0$. For all $u\in \mathcal{D}^{2,2}_{0,\gamma}(\mathbb{R}^N)$,
    \begin{align}\label{shri}
    \int_{\mathbb{R}^N}\frac{|\Delta u|^2}{|x|^{\gamma}} \mathrm{d}x
    +\left(\frac{N-4-\gamma}{2}\right)^4
    \int_{\mathbb{R}^N}\frac{|u|^2}{|x|^{\gamma+4}} \mathrm{d}x
    \geq \frac{N^2-4N+8+\gamma^2+4\gamma}{2}\int_{\mathbb{R}^N}\frac{|\nabla u|^2}{|x|^{\gamma+2}} \mathrm{d}x.
    \end{align}
    The constant $\frac{N^2-4N+8+\gamma^2+4\gamma}{2}$ is sharp and the inequality is strict for any nonzero $u$.
    \end{corollary}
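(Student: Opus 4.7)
The strategy is to pass to the limit $\mu\uparrow N-4$ in \eqref{wrsi}. I would first compute the coefficient limits from \eqref{RSisc}: using $\mu-\gamma\to N-4-\gamma$ and $2(N-4-\gamma)-(\mu-\gamma)\to N-4-\gamma$, one gets $\lim_{\mu\uparrow N-4}C_{\gamma,\mu,1} = \tfrac{N^{2}-4N+8+\gamma^{2}+4\gamma}{2}$. For $C_{\gamma,\mu,2}$, the third term vanishes thanks to its factor $(N-4-\mu)$, while the first two collapse into $\bigl(\tfrac{N-4-\gamma}{2}\bigr)^{4}$ via the identity $(N+\gamma)^{2}+(N-4-\gamma)^{2} = 2(N^{2}-4N+8+\gamma^{2}+4\gamma)$. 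The Sobolev prefactor $\bigl(\tfrac{N-4-\mu}{N-4-\gamma}\bigr)^{3+2/2^{**}_{0,\gamma}}$ tends to zero. Since each of the three weighted integrals in \eqref{wrsi} is independent of $\mu$ and finite for a fixed $u\in C_0^{\infty}(\mathbb{R}^{N})$, passing to the limit yields \eqref{shri}; the inequality extends to $\mathcal{D}^{2,2}_{0,\gamma}(\mathbb{R}^{N})$ by density.

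To show sharpness I will test with the Emden--Fowler sequence $v_{n}(x) = |x|^{-(N-4-\gamma)/2}\,\eta(\tfrac{1}{n}\log|x|)$ built around the formal optimizer $|x|^{-(N-4-\gamma)/2}$, for a fixed non-trivial $\eta\in C_{c}^{\infty}(\mathbb{R})$. Setting $f_{n}(r)=\eta(\tfrac{1}{n}\log r)$, the key cancellation $f_{n}''(r)+f_{n}'(r)/r = \eta''(\tfrac{1}{n}\log r)/(n^{2}r^{2})$ combined with the substitution $s=\log r$ gives $\int_{0}^{\infty}(f_{n}''+f_{n}'/r)^{2} r^{3}\,dr = O(n^{-3})$, whereas a direct computation yields $\int_{\mathbb{R}^{N}}|\nabla v_{n}|^{2}/|x|^{\gamma+2}\,dx \sim \omega_{N-1}\tfrac{(N-4-\gamma)^{2}}{4}\,n\int_{\mathbb{R}}\eta^{2}\,dt$. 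Dividing, the Rayleigh quotient associated with \eqref{shri} converges to $\tfrac{N^{2}-4N+8+\gamma^{2}+4\gamma}{2}$, so the constant is sharp.

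For strictness I will revisit the spherical decomposition from the proof of Lemma~\ref{lemtl} at the boundary value $\mu=N-4$, where all coefficients remain finite. Writing $u = \sum_{k}(r^{\vartheta}f_{k})(r)\Psi_{k}(\sigma)$ with $\vartheta=-(N-4-\gamma)/2$, the quantity $\mathrm{LHS}-\mathrm{RHS}$ of \eqref{shri} decomposes as a sum of non-negative mode contributions, each strictly positive when $f_{k}\not\equiv 0$. For $k=0$ the contribution reduces via \eqref{lte} (whose $f_{0}'/r$-coefficient becomes $N-1-(N-2)=1$ at $\mu=N-4$) to $\omega_{N-1}\int(f_{0}''+f_{0}'/r)^{2} r^{3}\,dr$, which vanishes only if $f_{0}=c\log r+d$, impossible for a non-trivial compactly supported smooth function; for $k\geq 1$ the $\zeta\to 0$ limit of \eqref{ltl} leaves the strictly positive term $2c_{k}\int r(f_{k}')^{2}\,dr$ (with $c_{k}=k(N-2+k)>0$), which cannot vanish unless $f_{k}\equiv 0$. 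Hence \eqref{shri} is strict for every non-zero $u\in\mathcal{D}^{2,2}_{0,\gamma}(\mathbb{R}^{N})$.

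The main obstacle is the algebraic verification that the two remaining pieces of $C_{\gamma,\mu,2}$ collapse to $\bigl(\tfrac{N-4-\gamma}{2}\bigr)^{4}$ at $\mu=N-4$, together with the precise choice of scale in the Emden--Fowler test sequence needed to isolate the leading behaviour of each weighted integral; the remaining steps follow the spherical decomposition framework already set up for Theorem~\ref{thmwrsi}.
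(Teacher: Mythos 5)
Your limiting step $\mu\uparrow N-4$ is exactly the paper's derivation of \eqref{shri} (and your coefficient limits are right: $C_{\gamma,\mu,1}\to\frac{N^2-4N+8+\gamma^2+4\gamma}{2}$, $C_{\gamma,\mu,2}\to\bigl(\frac{N-4-\gamma}{2}\bigr)^4$, the Sobolev prefactor vanishes; also note that, as the paper defines it, $\mathcal{D}^{2,2}_{0,\gamma}(\mathbb{R}^N)$ already consists of $C^\infty_0$ functions, so no density step is needed). Where you genuinely diverge from the paper is in sharpness and strictness. The paper does not build a minimizing sequence: it bounds the best constant from above by inserting the sharp weighted Hardy inequality \eqref{defwhi} of \cite{CW01} and the sharp weighted Hardy--Rellich inequality \eqref{defwhri} of \cite{GM11} into the Rayleigh quotient, getting $\bigl(\frac{N+\gamma}{2}\bigr)^2+\bigl(\frac{N-4-\gamma}{2}\bigr)^2$, and then invokes the strictness of \eqref{defwhi} for non-attainment. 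You instead work self-containedly: an Emden--Fowler sequence $v_n=|x|^{-(N-4-\gamma)/2}\eta(\frac1n\log|x|)$ for sharpness, and an endpoint ($\mu=N-4$, i.e.\ $\zeta=0$) spherical-harmonic decomposition for strictness. Both routes work; yours costs more computation but buys an exact structural fact the paper never states, namely that in logarithmic variables the radial deficit of \eqref{shri} is precisely $\int (g'')^2\,\mathrm{d}s$ with $g(s)=\bigl(r^{(N-4-\gamma)/2}u\bigr)(e^s)$, which simultaneously yields the minimizing sequence, the value of the constant, and non-attainment on the radial mode, while the paper's sharpness argument is shorter but leans on the external sharp constants of \cite{CW01,GM11}.

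Three points you should make explicit to close your argument. First, the link between your $O(n^{-3})$ quantity $\int_0^\infty(f_n''+f_n'/r)^2r^3\,\mathrm{d}r$ and the numerator of the Rayleigh quotient is exactly identity \eqref{lte} evaluated at $\mu=N-4$ (equivalently, the cancellation of the $g^2$ and $(g')^2$ coefficients after the substitution $s=\log r$); this does hold, but it is the heart of the matter and must be verified, not just invoked. Second, Lemma \ref{lemtle} and the identities \eqref{lte}, \eqref{ltl} are stated for $\mu<N-4$; at the endpoint the weight $r^{N-5-\mu}=r^{-1}$ is borderline, so you should note that all one-dimensional integrals still converge because $f_k=r^{(N-4-\gamma)/2}u_k$ with $u_k=O(r^k)$ near the origin and $f_k$ vanishing for large $r$. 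Third, for $k\geq1$ your strictness claim discards the term $A_{3,k}\int f_k^2 r^{-1}\,\mathrm{d}r$; at $\zeta=0$ one has $A_{3,k}=c_k\bigl[c_k-(2+\gamma)^2\bigr]$, which is nonnegative since $c_k\geq N-1\geq 4\geq(2+\gamma)^2$ for $N\geq5$, $k\geq1$, $-2<\gamma\leq0$ (and vanishes when $N=5$, $k=1$, $\gamma=0$), so you are entitled to drop it and keep only $2c_k\int r(f_k')^2\,\mathrm{d}r$ — but say so. With these details written out, your proposal is a complete and correct alternative proof.
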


    \begin{proof}
    Passing $\mu\uparrow N-4$ in \eqref{wrsi}, we directly obtain \eqref{shri}.
    Next, we will show the constant $\frac{N^2-4N+8+\gamma^2+4\gamma}{2}$ is sharp in the sense of
    \begin{align*}
    \frac{N^2-4N+8+\gamma^2+4\gamma}{2}
    =\inf_{u\in \mathcal{D}^{2,2}_{0,\gamma}(\mathbb{R}^N)\setminus\{0\}}
    \frac{\int_{\mathbb{R}^N}\frac{|\Delta u|^2}{|x|^{\gamma}} \mathrm{d}x
    +\left(\frac{N-4-\gamma}{2}\right)^4
    \int_{\mathbb{R}^N}\frac{|u|^2}{|x|^{\gamma+4}} \mathrm{d}x}
    {\int_{\mathbb{R}^N}\frac{|\nabla u|^2}{|x|^{\gamma+2}} \mathrm{d}x}.
    \end{align*}

    We notice that the work of Catrina and Wang
    \cite[Theorem 1.1 (ii)]{CW01} indicates the following weighted Hardy inequality
    \begin{align}\label{defwhi}
    \int_{\mathbb{R}^N}\frac{|\nabla u|^2}{|x|^{\gamma+2}} \mathrm{d}x
    \geq \left(\frac{N-4-\gamma}{2}\right)^2
    \int_{\mathbb{R}^N}\frac{|u|^2}{|x|^{\gamma+4}} \mathrm{d}x,\quad \forall u\in C^\infty_0(\mathbb{R}^N),
    \end{align}
    for $N\geq 3$ and $\gamma<N-4$, the constant $\left(\frac{N-4-\gamma}{2}\right)^2$ is sharp and the inequality is strict for any nonzero $u$. Furthermore, the work of Ghoussoub and Moradifam
    \cite[Theorem 6.1 (3)]{GM11} (see also \cite[Corollary 5.9]{DM22} for more general case) indicates the following weighted Hardy-Rellich inequality
    \begin{align}\label{defwhri}
    \int_{\mathbb{R}^N}\frac{|\Delta u|^2}{|x|^{\gamma}} \mathrm{d}x
    \geq \left(\frac{N+\gamma}{2}\right)^2
    \int_{\mathbb{R}^N}\frac{|\nabla u|^2}{|x|^{\gamma+2}} \mathrm{d}x,\quad \forall u\in C^\infty_0(\mathbb{R}^N),
    \end{align}
    for $N\geq 2$ and $\frac{-(N+4)-2\sqrt{N^2-N+1}}{3}\leq\gamma\leq \min\left\{N-2,\frac{-(N+4)+2\sqrt{N^2-N+1}}{3}\right\}$, the constant $\left(\frac{N+\gamma}{2}\right)^2$ is sharp. Note that
    \begin{align*}
    & \frac{-(N+4)+2\sqrt{N^2-N+1}}{3}>0\quad\mbox{holds for all}\quad N>2+2\sqrt{2};
    \\
    & \frac{-(N+4)-2\sqrt{N^2-N+1}}{3}<-2\quad\mbox{holds for all}\quad N\geq 1.
    \end{align*}
    Therefore, when $N\geq 5$ and $-2<\gamma\leq0$, we can always obtain the sharp weighted Hardy inequality \eqref{defwhi} and weighted Hardy-Rellich inequality \eqref{defwhri} in $\mathcal{D}^{2,2}_{0,\gamma}(\mathbb{R}^N)$, due to $\mathcal{D}^{2,2}_{0,\gamma}(\mathbb{R}^N)$ is dense in $C^\infty_0(\mathbb{R}^N)$. Then,
    \begin{align*}
    & \inf_{u\in \mathcal{D}^{2,2}_{0,\gamma}(\mathbb{R}^N)\setminus\{0\}}
    \frac{\int_{\mathbb{R}^N}\frac{|\Delta u|^2}{|x|^{\gamma}} \mathrm{d}x
    +\left(\frac{N-4-\gamma}{2}\right)^4
    \int_{\mathbb{R}^N}\frac{|u|^2}{|x|^{\gamma+4}} \mathrm{d}x}
    {\int_{\mathbb{R}^N}\frac{|\nabla u|^2}{|x|^{\gamma+2}} \mathrm{d}x}
    \\
    & \leq \inf_{u\in \mathcal{D}^{2,2}_{0,\gamma}(\mathbb{R}^N)\setminus\{0\}}
    \frac{\int_{\mathbb{R}^N}\frac{|\Delta u|^2}{|x|^{\gamma}} \mathrm{d}x
    +\left(\frac{N-4-\gamma}{2}\right)^2
    \int_{\mathbb{R}^N}\frac{|\nabla u|^2}{|x|^{\gamma+2}} \mathrm{d}x}
    {\int_{\mathbb{R}^N}\frac{|\nabla u|^2}{|x|^{\gamma+2}} \mathrm{d}x}
    \\
    & \leq \left(\frac{N+\gamma}{2}\right)^2+\left(\frac{N-4-\gamma}{2}\right)^2
    =\frac{N^2-4N+8+\gamma^2+4\gamma}{2},
    \end{align*}
    which shows the constant $\frac{N^2-4N+8+\gamma^2+4\gamma}{2}$ in \eqref{shri} is sharp. Moreover, the infimum in above can not be achieved due to the weighted Hardy inequality \eqref{defwhi} is strict, therefore the inequality in \eqref{shri} is also strict for any nonzero $u$. Now, the proof of Corollary \ref{coroshri} is completed.
\end{proof}

\section{{\bfseries Sharp second-order CKN type inequality}}\label{sectcknps}
Firstly, we show the second-order CKN type inequality \eqref{cknrs} holds which does not need the additional condition \eqref{defac} with $2\alpha-\beta=-2a$.

\begin{proposition}\label{propgeqd}
Assume that $N\geq 5$, $-N<\alpha-2\leq \beta\leq \frac{N\alpha}{N-2}$. Then there is a constant $\mathcal{S}>0$ such that
\begin{equation}\label{ckn2nf}
    \int_{\mathbb{R}^N}|x|^{-\beta}|\mathrm{div} (|x|^{\alpha}\nabla u)|^2 \mathrm{d}x
    \geq \mathcal{S}\left(\int_{\mathbb{R}^N}
    |x|^{\beta}|u|^{2^{**}_{\alpha,\beta}} \mathrm{d}x\right)^{\frac{2}{2^{**}_{\alpha,\beta}}},\quad \forall u\in C^\infty_0(\mathbb{R}^N).
    \end{equation}
\end{proposition}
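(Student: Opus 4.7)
I would reduce inequality~\eqref{ckn2nf} to an already-established weighted second-order Sobolev-type inequality via the radial substitution outlined in Remark~\ref{remkspnckn}. Only existence of some positive constant $\mathcal{S}$ is required, so sharpness need not be tracked.

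For $u\in C^\infty_0(\mathbb{R}^N)$, set $y=\Phi(x):=|x|^{\alpha/(N-2)}x$, so $|y|=|x|^\zeta$ with $\zeta:=\frac{N-2+\alpha}{N-2}>0$ by $\alpha>2-N$, and define $w$ by $u(x)=w(y)$. A polar-coordinate change of variables yields the identity
\[
\int_{\mathbb{R}^N}|x|^\beta|u|^{2^{**}_{\alpha,\beta}}\,\mathrm{d}x
=\zeta^{-1}\int_{\mathbb{R}^N}|y|^{\gamma}|w|^{2^{**}_{0,\gamma}}\,\mathrm{d}y,
\qquad
\gamma:=\frac{\beta(N-2)-\alpha N}{N-2+\alpha},
\]
with $2^{**}_{\alpha,\beta}=2^{**}_{0,\gamma}$, and the hypothesis $\alpha-2\le\beta\le\frac{N\alpha}{N-2}$ exactly corresponds to $\gamma\in[-2,0]$.

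For the left-hand side, I would chain-rule-expand $\nabla_x[w(\Phi(x))]$ and perform a spherical-harmonic decomposition $u=\sum_k f_k(|x|)\Psi_k(\hat x)$ as in Lemma~\ref{lemtl}. Each mode contributes a one-dimensional weighted $L^2$-norm of a second-order ODE operator $\mathcal{L}_k$ applied to $f_k$; after the further radial substitution $t=r^\zeta$, the $k=0$ summand becomes $\zeta^3\int_{\mathbb{R}^N}|\Delta w_0|^2/|y|^\gamma\,\mathrm{d}y$, where $w_0$ is the radial part of $w$. A lower bound of the form $\int|x|^{-\beta}|\mathrm{div}(|x|^\alpha\nabla u)|^2\,\mathrm{d}x\ge c\int|\Delta w|^2/|y|^\gamma\,\mathrm{d}y$ then follows once each $\mathcal{L}_k$ is checked to be $L^2$-coercive with a uniform-in-$k$ constant, a polynomial positivity check in $c_k=k(N-2+k)$ and $(N,\alpha,\beta)$. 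Once this is established, applying Theorem~\ref{corolehups} for $-2<\gamma<0$ and the classical second-order Sobolev inequality (\cite{EFJ90,Li85-1,Va93}) for $\gamma=0$ to $\int|\Delta w|^2/|y|^\gamma\,\mathrm{d}y$, combined with the above identity on the right-hand side, yields~\eqref{ckn2nf} with $\mathcal{S}=\zeta^{3+2/2^{**}_{0,\gamma}}\mathcal{S}_{N,\gamma}$ (and the analogous classical constant when $\gamma=0$). The endpoint $\gamma=-2$ (i.e.\ $\beta=\alpha-2$) is handled either by a monotone limit from $\beta>\alpha-2$ or by invoking the equivalence between~\eqref{cknrs} and~\eqref{cknrss} stated as Lemma~\ref{lemertwo}.

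\textbf{Main obstacle.} The sharp version in Theorem~\ref{thmcknrs} is restricted to $\alpha\le 0$ precisely because the mode-by-mode sign analysis of Lemma~\ref{lemtl} (through $A_{3,k}\ge 0$) is what delivers the $\geq$ direction in that range. Proposition~\ref{propgeqd} allows $\alpha>0$, and in that range the natural transformation parameters instead give $\mu<\gamma$, reversing the crucial sign. To circumvent this I would either (i) perform a direct algebraic re-verification that each $\mathcal{L}_k$ remains coercive with a uniform-in-$k$ constant—tractable since no sharp constant is needed—or, more robustly, (ii) start from the expanded identity in Remark~\ref{remer} and use the weak-form weighted Hardy-Rellich inequality~\eqref{hriw} of Theorem~\ref{thmhriw} to absorb the sign-indefinite $(x\cdot\nabla u)^2$ term into the $|\Delta u|^2$ term, thereby reducing the question to the standard weighted second-order CKN inequality~\eqref{ckn2} without invoking condition~\eqref{defac}—since the subtracted Hardy-Rellich correction is precisely what the spectral condition~\eqref{defac} would otherwise provide.
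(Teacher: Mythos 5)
Your reduction works only where the paper's Theorem \ref{thmcknrs} already works, and the fixes you offer for the genuinely new range $\alpha>0$ do not close the gap. Option (i) simply asserts the mode-by-mode coercivity "with a uniform-in-$k$ constant" for $\alpha>0$; that is exactly the step whose failure of sign (your own observation that $\mu<\gamma$) blocks the argument, and no computation is given, so nothing is proved there. Option (ii) is unsound as stated: its target is the standard second-order CKN inequality \eqref{ckn2} "without invoking condition \eqref{defac}", but Caldiroli--Musina showed \eqref{ckn2} is \emph{false} when \eqref{defac} fails, and such parameters do occur inside the hypotheses of the proposition once $\alpha>0$ (take $\beta=\alpha-2$ and $2\alpha-\beta=\alpha+2=N+2k$, e.g.\ $N=5$, $\alpha=3$, $\beta=1$, giving $a=-N/2$). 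Moreover the proposed absorption cannot even be carried out quantitatively: for $\beta=\alpha-2$ the coefficient of the negative $(x\cdot\nabla u)^2$ term in \eqref{ckn2nfe} is $-\alpha^2$, while the weak Hardy--Rellich constant of Theorem \ref{thmhriw} satisfies $C(N,a)\le\left(\frac{N+2a}{2}\right)^2=\left(\frac{N-2-\alpha}{2}\right)^2<\alpha^2$ for large $\alpha$, so \eqref{hriw} cannot absorb that term into $|\Delta u|^2$ alone. The term you discard in this step, the positive gradient term $\alpha(N-4+2\alpha-\beta)\int|x|^{2\alpha-2-\beta}|\nabla u|^2\,\mathrm{d}x$, is precisely what saves the case $\alpha>0$. (Also, your endpoint fix for $\beta=\alpha-2$ via Lemma \ref{lemertwo} is vacuous, since there $\bar\beta=2\alpha-\beta-4=\beta$, i.e.\ the transformation maps the endpoint to itself.)

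The paper's proof is different and more elementary. For $2-N<\alpha\le0$ it does not redo any spherical decomposition: it proves the one-sided comparison \eqref{neq}, namely $\int|x|^{2\alpha-\beta}|\Delta u|^2\,\mathrm{d}x\le\mathfrak{C}\int|x|^{-\beta}|\mathrm{div}(|x|^{\alpha}\nabla u)|^2\,\mathrm{d}x$, by testing with $|x|^{\alpha-\beta-2}u$, the weighted Hardy inequality and Young's inequality, and then invokes \eqref{ckn2}, which is legitimate because \eqref{defac} holds automatically in that range. For $\alpha>0$ it expands the divergence-form norm as in \eqref{ckn2nfe}, notes that the gradient coefficient $\alpha(N-4+2\alpha-\beta)$ is positive, treats the $(x\cdot\nabla u)^2$ term by its sign (dropping it when $2\beta-3\alpha+4\ge0$, bounding it by the gradient term via $(x\cdot\nabla u)^2\le|x|^2|\nabla u|^2$ otherwise, which leaves the positive coefficient $\alpha(N-\alpha+\beta)\ge\alpha(N-2)$), and concludes from the \emph{first-order} CKN inequality \eqref{ckn1} applied to the surviving gradient term — thereby avoiding \eqref{ckn2} entirely in the regime where it can fail. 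If you want to salvage your approach, you must either carry out the coercivity computation of option (i) in full for $\alpha>0$, or replace the target of option (ii) by a first-order inequality as the paper does.
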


\begin{proof}
We know from \cite[Proposition A.1]{DT23-f} which states that there is a constant $\mathfrak{C}=\mathfrak{C}(N,\alpha,\beta)>0$ such that
\begin{align}\label{neq}
\int_{\mathbb{R}^N}|x|^{2\alpha-\beta}|\Delta u|^2 \mathrm{d}x
\leq \mathfrak{C} \int_{\mathbb{R}^N}|x|^{-\beta}|\mathrm{div} (|x|^{\alpha}\nabla u)|^2 \mathrm{d}x,\quad \forall u\in C^\infty_0(\mathbb{R}^N).
\end{align}
For the readers' convenience, here we give the proof of \eqref{neq}. For $u\in C^\infty_0(\mathbb{R}^N)$, let
\begin{align}\label{neqer}
w:=-|x|^{-\frac{\beta}{2}}\mathrm{div} (|x|^{\alpha}\nabla u),
\end{align}
then we see that
\begin{align}\label{neqer1}
\int_{\mathbb{R}^N}|w|^2\mathrm{d}x
=\int_{\mathbb{R}^N}|x|^{-\beta}|\mathrm{div} (|x|^{\alpha}\nabla u)|^2\mathrm{d}x.
\end{align}
It follows from \eqref{neqer} that
\begin{align*}
|x|^{\alpha-\frac{\beta}{2}}\Delta u=-w-\alpha|x|^{\alpha-\frac{\beta}{2}-2}(x\cdot \nabla u),
\end{align*}
thus
\begin{align}\label{neqer2}
\int_{\mathbb{R}^N}|x|^{2\alpha-\beta}|\Delta u|^2\mathrm{d}x
& = \int_{\mathbb{R}^N}|w|^2\mathrm{d}x
+2\alpha\int_{\mathbb{R}^N}|x|^{\alpha-\frac{\beta}{2}-2}(x\cdot \nabla u)w\mathrm{d}x
\nonumber\\
& \quad +\alpha^2\int_{\mathbb{R}^N} |x|^{2\alpha-\beta-4}(x\cdot \nabla u)^2\mathrm{d}x.
\end{align}
Taking $\phi=|x|^{\alpha-\beta-2} u$ as a test function into \eqref{neqer} which is equivalent to $|x|^{\frac{\beta}{2}}w=-\mathrm{div} (|x|^{\alpha}\nabla u)$,
\[
\int_{\mathbb{R}^N}|x|^{\frac{\beta}{2}}w \phi\mathrm{d}x
=\int_{\mathbb{R}^N}|x|^{\alpha}\nabla u\cdot \nabla \phi\mathrm{d}x,
\]
therefore,
\begin{align}\label{neqere}
\int_{\mathbb{R}^N} |x|^{\alpha-\frac{\beta}{2}-2}wu\mathrm{d}x
& =\int_{\mathbb{R}^N} |x|^{2\alpha-\beta-2}|\nabla u|^2\mathrm{d}x+(\alpha-\beta-2)\int|x|^{2\alpha-\beta-4}
\left(x\cdot \nabla \left(\frac{|u|^2}{2}\right)\right)\mathrm{d}x
\nonumber\\
& =\frac{(2+\beta-\alpha)(N+2\alpha-\beta-4)}{2}
\int_{\mathbb{R}^N}|x|^{2\alpha-\beta-4}|u|^2\mathrm{d}x
\nonumber\\
& \quad +\int_{\mathbb{R}^N} |x|^{2\alpha-\beta-2}|\nabla u|^2\mathrm{d}x.
\end{align}
Note that the assumptions imply $\frac{(2+\beta-\alpha)(N+2\alpha-\beta-4)}{2}\geq 0$,
then it follows from \eqref{neqere} and the H\"{o}lder inequality that
\begin{align*}
\int_{\mathbb{R}^N} |x|^{2\alpha-\beta-2}|\nabla u|^2\mathrm{d}x
\leq \int_{\mathbb{R}^N} |x|^{\alpha-\frac{\beta}{2}-2}wu\mathrm{d}x
\leq \left(\int_{\mathbb{R}^N} |w|^2\mathrm{d}x\right)^{\frac{1}{2}}\left(\int_{\mathbb{R}^N} |x|^{2\alpha-\beta-4}|u|^2\mathrm{d}x\right)^{\frac{1}{2}}.
\end{align*}
Since the assumptions also imply $-\frac{2\alpha-\beta-2}{2}<\frac{N-2}{2}$, by using the weighted Hardy inequality (see \cite{CW01}) we obtain
\[
\int_{\mathbb{R}^N}|x|^{2\alpha-\beta-4}|u|^2\mathrm{d}x
\leq E \int_{\mathbb{R}^N}|x|^{2\alpha-\beta-2}|\nabla u|^2\mathrm{d}x,
\]
for some $E>0$ independent of $u$ (in fact, from \cite{CW01} we know the sharp constant is $E=(\frac{N-4+2\alpha-\beta}{2})^2$), then we see that
\begin{align*}
\int_{\mathbb{R}^N} |x|^{2\alpha-\beta-2}|\nabla u|^2\mathrm{d}x
\leq E\int_{\mathbb{R}^N} |w|^2\mathrm{d}x.
\end{align*}
By the Young inequality and $(x\cdot\nabla u)^2\leq |x|^2|\nabla u|^2$, it follows from \eqref{neqer1} and \eqref{neqer2} that
\begin{align*}
\int_{\mathbb{R}^N}|x|^{2\alpha-\beta}|\Delta u|^2\mathrm{d}x
& \leq \int_{\mathbb{R}^N}|w|^2\mathrm{d}x
+|\alpha|\left(\int_{\mathbb{R}^N}|x|^{2\alpha-\beta-2}|\nabla u|^2\mathrm{d}x
+\int_{\mathbb{R}^N}|w|^2\mathrm{d}x
\right)
\nonumber\\
& \quad +\alpha^2\int_{\mathbb{R}^N} |x|^{2\alpha-\beta-2}|\nabla u|^2\mathrm{d}x
\\
& \leq \left[1+|\alpha|+E(|\alpha|+\alpha^2)\right]\int_{\mathbb{R}^N}
|x|^{-\beta}|\mathrm{div} (|x|^{\alpha}\nabla u)|^2\mathrm{d}x.
\end{align*}
Therefore, we obtain inequality \eqref{neq} with $\mathfrak{C}=1+|\alpha|+E(|\alpha|+\alpha^2)$.

When $2-N<\alpha\leq 0$, it is easy to verify that $\frac{2\alpha-\beta}{-2}<\frac{N-4}{2}$ and $\frac{2\alpha-\beta}{-2}>-\frac{N}{2}$ thus the condition \eqref{defac} always holds with $2\alpha-\beta=-2a$.
Combining with \eqref{neq} and the second-order CKN inequality \eqref{ckn2} so that \eqref{ckn2nf} holds.

Now, we only need to consider $\alpha>0$. By a direct calculation we have $\mathrm{div} (|x|^{\alpha}\nabla u)=|x|^\alpha\Delta u+\alpha |x|^{\alpha-2}(x\cdot\nabla u)$, then
    \begin{align}\label{ckn2nfe}
    \int_{\mathbb{R}^N}|x|^{-\beta}|\mathrm{div} (|x|^{\alpha}\nabla u)|^2\mathrm{d}x
    & = \int_{\mathbb{R}^N}|x|^{2\alpha-\beta}|\Delta u|^2\mathrm{d}x
    + 2\alpha \int_{\mathbb{R}^N}|x|^{2\alpha-2-\beta}\Delta u(x\cdot\nabla u)\mathrm{d}x
    \nonumber \\
    & \quad + \alpha^2 \int_{\mathbb{R}^N}|x|^{2\alpha-4-\beta}(x\cdot\nabla u)^2\mathrm{d}x\nonumber \\
    & = \int_{\mathbb{R}^N}|x|^{2\alpha-\beta}|\Delta u|^2\mathrm{d}x
    + \alpha (N-4+2\alpha-\beta)\int_{\mathbb{R}^N}
    |x|^{2\alpha-2-\beta}|\nabla u|^2\mathrm{d}x
    \nonumber \\
    & \quad+ \alpha(2\beta-3\alpha+4) \int_{\mathbb{R}^N}|x|^{2\alpha-4-\beta}(x\cdot\nabla u)^2\mathrm{d}x,
    \end{align}
    thanks to
    \begin{align*}
    \int_{\mathbb{R}^N}|x|^{2\alpha-2-\beta}\Delta u(x\cdot\nabla u)\mathrm{d}x
    & = \frac{N-4+2\alpha-\beta}{2}\int_{\mathbb{R}^N}
    |x|^{2\alpha-2-\beta}|\nabla u|^2\mathrm{d}x
    \\
    & \quad + (\beta-2\alpha+2) \int_{\mathbb{R}^N}|x|^{2\alpha-4-\beta}(x\cdot\nabla u)^2\mathrm{d}x,
    \end{align*}
    see \eqref{psny3}.
    Since $N-4+2\alpha-\beta\geq N-4+2\alpha-\frac{N\alpha}{N-2}=\frac{(N-4)(N-2+\alpha)}{N-2}>0$, if $2\beta-3\alpha+4\geq 0$,
     \begin{align*}
    \int_{\mathbb{R}^N}|x|^{-\beta}|\mathrm{div} (|x|^{\alpha}\nabla u)|^2\mathrm{d}x
    \geq \int_{\mathbb{R}^N}|x|^{2\alpha-\beta}|\Delta u|^2\mathrm{d}x
    + \alpha (N-4+2\alpha-\beta)\int_{\mathbb{R}^N}
    |x|^{2\alpha-2-\beta}|\nabla u|^2\mathrm{d}x,
    \end{align*}
    then we can deduce \eqref{ckn2nf} from the first-order CKN inequality \eqref{ckn1} due to $-\frac{2\alpha-2-\beta}{2}< \frac{N-2}{2}$.

    It remains only the case $\alpha>0$ and $2\beta-3\alpha+4<0$. Thanks to $(x\cdot\nabla u)^2\leq |x|^2|\nabla u|^2$, then it follows from \eqref{ckn2nfe} that
    \begin{align*}
    \int_{\mathbb{R}^N}|x|^{-\beta}|\mathrm{div} (|x|^{\alpha}\nabla u)|^2\mathrm{d}x
    \geq & \int_{\mathbb{R}^N}|x|^{2\alpha-\beta}|\Delta u|^2\mathrm{d}x
    + \alpha (N-\alpha+\beta)\int_{\mathbb{R}^N}
    |x|^{2\alpha-2-\beta}|\nabla u|^2\mathrm{d}x.
    \end{align*}
    In this case, we have $N-\alpha+\beta\geq N-\alpha+\alpha-2=N-2>0$, then the first-order CKN inequality \eqref{ckn1} also indicates \eqref{ckn2nf} holds. Now, the proof is completed.
\end{proof}

    Now, we establish the sharp second-order CKN type inequality \eqref{cknrs} when $2-N<\alpha< 0$ and $\alpha-2<\beta\leq \frac{N}{N-2}\alpha$, which has radially symmetry extremal functions.

\vskip0.25cm

\noindent{\bf \em Proof of Theorem \ref{thmcknrs}}. Note that when $\alpha=\beta=0$, then \eqref{cknrs} reduces to classical second-order Sobolev inequality (see \cite{EFJ90,Li85-1,Va93}). Furthermore, if $\alpha=0$ and $-2<\beta<0$, then \eqref{cknrs} reduces to second CKN inequality \eqref{lehi} obtained in Theorem \ref{corolehups}. Therefore, we only need to consider the case $2-N<\alpha<0$.

    For each $u\in \mathcal{D}^{2,2}_{\alpha,\beta}(\mathbb{R}^N)$, let us make the change
    \[
    u(x)=|x|^{\eta}v(x) \quad\mbox{with}\quad
    \eta=-\frac{\alpha (N-4+2\alpha-\beta)}{2(N-2+\alpha)}.
    \]
    Let
    \begin{align}\label{defgm}
    \gamma:=\beta -2(\alpha+\eta)=\frac{\beta(N-2)-\alpha N}{N-2+\alpha}.
    \end{align}
    From the assumptions $2-N<\alpha<0$ and $\alpha-2<\beta\leq \frac{N}{N-2}\alpha$, we have $-2<\gamma\leq 0$.

    A direct calculation indicates
    \begin{align}\label{psle}
    \int_{\mathbb{R}^N}
    |x|^{\beta}|u|^{2^{**}_{\alpha,\beta}} \mathrm{d}x
    =\int_{\mathbb{R}^N}
    |x|^{\gamma}|v|^{2^{**}_{0,\gamma}} \mathrm{d}x,
    \end{align}
    and
    \begin{align}\label{psny}
    & \int_{\mathbb{R}^N}|x|^{-\beta}|\mathrm{div} (|x|^{\alpha}\nabla u)|^2 \mathrm{d}x
    \nonumber\\
    & = \int_{\mathbb{R}^N}
    |x|^{2(\eta+\alpha)-\beta}[\Delta v +(2\eta+\alpha)|x|^{-2}(x\cdot \nabla v)
    +\eta(N+\alpha+\eta-2)|x|^{-2}v]^2 \mathrm{d}x
    \nonumber\\
    & = \int_{\mathbb{R}^N}\frac{|\Delta v|^2}{|x|^\gamma} \mathrm{d}x
    +\eta^2(N+\alpha+\eta-2)^2
    \int_{\mathbb{R}^N}\frac{v^2}{|x|^{\gamma+4}}\mathrm{d}x
    +(2\eta+\alpha)^2\int_{\mathbb{R}^N}\frac{(x\cdot \nabla v)^2}{|x|^{\gamma+4}}\mathrm{d}x
    \nonumber\\
    &\quad +2(2\eta+\alpha)\int_{\mathbb{R}^N}\frac{(x\cdot \nabla v)\Delta v}{|x|^{\gamma+2}}\mathrm{d}x
    +2\eta(N+\alpha+\eta-2)
    \int_{\mathbb{R}^N}\frac{v\Delta v}{|x|^{\gamma+2}}\mathrm{d}x
    \nonumber\\
    &\quad +2(2\eta+\alpha)\eta(N+\alpha+\eta-2)
    \int_{\mathbb{R}^N}\frac{v(x\cdot \nabla v)}{|x|^{\gamma+4}}\mathrm{d}x.
    \end{align}
    Now, we calculate those integrals in last term as the following:
    \begin{align}\label{psny1}
    \int_{\mathbb{R}^N}\frac{v(x\cdot \nabla v)}{|x|^{\gamma+4}}\mathrm{d}x
    & =\frac{1}{2}\int_{\mathbb{R}^N}\frac{x\cdot \nabla (v^2)}{|x|^{\gamma+4}}\mathrm{d}x
    =-\frac{1}{2}\int_{\mathbb{R}^N}v^2\mathrm{div}
    \left(\frac{x}{|x|^{\gamma+4}}\right)\mathrm{d}x
    \nonumber\\
    & = -\frac{N-4-\gamma}{2}\int_{\mathbb{R}^N}\frac{v^2}{|x|^{\gamma+4}}
    \mathrm{d}x,
    \end{align}
    and
    \begin{align}\label{psny2}
    \int_{\mathbb{R}^N}\frac{v\Delta v}{|x|^{\gamma+2}}\mathrm{d}x
    & =-\int_{\mathbb{R}^N}\nabla v\cdot\nabla\left(\frac{v}{|x|^{\gamma+2}}\right)\mathrm{d}x
    = (\gamma+2)\int_{\mathbb{R}^N}\frac{v(x\cdot \nabla v)}{|x|^{\gamma+4}}\mathrm{d}x
    -\int_{\mathbb{R}^N}\frac{|\nabla v|^2}{|x|^{\gamma+2}}
    \mathrm{d}x
    \nonumber\\
    & = -\frac{(N-4-\gamma)(\gamma+2)}{2}
    \int_{\mathbb{R}^N}\frac{v^2}{|x|^{\gamma+4}}
    \mathrm{d}x
    -\int_{\mathbb{R}^N}\frac{|\nabla v|^2}{|x|^{\gamma+2}}
    \mathrm{d}x,
    \end{align}
    furthermore,
    \begin{align}\label{psny3}
    \int_{\mathbb{R}^N}\frac{(x\cdot \nabla v)\Delta v}{|x|^{\gamma+2}}\mathrm{d}x
    & =-\int_{\mathbb{R}^N}\nabla v\cdot\nabla\left(\frac{x\cdot \nabla v}{|x|^{\gamma+2}}\right)\mathrm{d}x
    \nonumber\\
    & = -\sum^N_{i,j=1}
    \int_{\mathbb{R}^N}\left[
    \frac{\delta_{ij}}{|x|^{\gamma+2}}\frac{\partial v}{\partial x_i}
    \frac{\partial v}{\partial x_j}
    +\frac{x_j}{|x|^{\gamma+2}}\frac{\partial^2 v}{\partial x_i\partial x_j}
    \frac{\partial v}{\partial x_i}
    -\frac{(\gamma+2)x_ix_j}{|x|^{\gamma+4}}\frac{\partial v}{\partial x_i}
    \frac{\partial v}{\partial x_j}
    \right]
    \mathrm{d}x
    \nonumber\\
    & = -\int_{\mathbb{R}^N}\frac{|\nabla v|^2}{|x|^{\gamma+2}}
    \mathrm{d}x
    +\frac{N-2-\gamma}{2}\int_{\mathbb{R}^N}\frac{|\nabla v|^2}{|x|^{\gamma+2}}
    \mathrm{d}x
    + (\gamma+2)\int_{\mathbb{R}^N}\frac{(x\cdot\nabla v)^2}{|x|^{\gamma+4}}
    \mathrm{d}x
    \nonumber\\
    & = \frac{N-4-\gamma}{2}\int_{\mathbb{R}^N}\frac{|\nabla v|^2}{|x|^{\gamma+2}}
    \mathrm{d}x
    + (\gamma+2)\int_{\mathbb{R}^N}\frac{(x\cdot\nabla v)^2}{|x|^{\gamma+4}}
    \mathrm{d}x.
    \end{align}
    Therefore, putting \eqref{psny1}, \eqref{psny2} and \eqref{psny3} into \eqref{psny} we have
    \begin{align*}
    & (2\eta+\alpha)^2\int_{\mathbb{R}^N}\frac{(x\cdot \nabla v)^2}{|x|^{\gamma+4}}\mathrm{d}x
    +2(2\eta+\alpha)\int_{\mathbb{R}^N}\frac{(x\cdot \nabla v)\Delta v}{|x|^{\gamma+2}}\mathrm{d}x
    \nonumber\\
    &\quad +2\eta(N+\alpha+\eta-2)
    \int_{\mathbb{R}^N}\frac{v\Delta v}{|x|^{\gamma+2}}\mathrm{d}x
    +2(2\eta+\alpha)\eta(N+\alpha+\eta-2)
    \int_{\mathbb{R}^N}\frac{v(x\cdot \nabla v)}{|x|^{\gamma+4}}\mathrm{d}x
    \\
    &= [(2\eta+\alpha)(N-4-\gamma)-2\eta(N+\alpha+\eta-2)]
    \int_{\mathbb{R}^N}\frac{|\nabla v|^2}{|x|^{\gamma+2}}\mathrm{d}x
    \\
    &\quad+ (2\eta+\alpha)(2\eta+\alpha+2\gamma+4)
    \int_{\mathbb{R}^N}\frac{(x\cdot\nabla v)^2}{|x|^{\gamma+4}} \mathrm{d}x\\
    &\quad -\eta(N+\alpha+\eta-2)(N-4-\gamma)(2\eta+\alpha+\gamma+2)
    \int_{\mathbb{R}^N}\frac{v^2}{|x|^{\gamma+4}}\mathrm{d}x
    \\
    & \geq [(2\eta+\alpha)(N+2\eta+\alpha+\gamma)-2\eta(N+\alpha+\eta-2)]
    \int_{\mathbb{R}^N}\frac{|\nabla v|^2}{|x|^{\gamma+2}}\mathrm{d}x
    \\
    &\quad-\eta(N+\alpha+\eta-2)(N-4-\gamma)(2\eta+\alpha+\gamma+2)
    \int_{\mathbb{R}^N}\frac{v^2}{|x|^{\gamma+4}}\mathrm{d}x,
    \end{align*}
    due to $(x\cdot\nabla v)^2\leq |x|^2|\nabla v|^2$ and
    $$(2\eta+\alpha)(2\eta+\alpha+2\gamma+4)
    =\frac{\alpha(2-\alpha+\beta)^2[2(N-2)+\alpha]}{(N-2+\alpha)^2}<0
    $$
    for $\beta>\alpha-2$ and $2-N<\alpha<0$ (note that the assumption $\alpha<0$ plays a crucial role), furthermore, the equality holds if and only if $v$ is radial.
    Thus from \eqref{psny} we have
    \begin{align}\label{psnb}
    & \int_{\mathbb{R}^N}|x|^{-\beta}|\mathrm{div} (|x|^{\alpha}\nabla u)|^2 \mathrm{d}x
    \geq \int_{\mathbb{R}^N}\frac{|\Delta v|^2}{|x|^\gamma} \mathrm{d}x
    \nonumber\\
    &\quad+ \left[\eta^2(N+\alpha+\eta-2)^2
    -\eta(N+\alpha+\eta-2)(N-4-\gamma)(2\eta+\alpha+\gamma+2)\right]
    \int_{\mathbb{R}^N}\frac{v^2}{|x|^{\gamma+4}}\mathrm{d}x
    \nonumber\\
    &\quad+ [(2\eta+\alpha)(N+2\eta+\alpha+\gamma)-2\eta(N+\alpha+\eta-2)]
    \int_{\mathbb{R}^N}\frac{|\nabla v|^2}{|x|^{\gamma+2}}\mathrm{d}x,
    \end{align}
    and the equality holds if and only if $v$ is radial (so does $u$ due to $u(x)=|x|^\eta v(x)$).

    Now, let us make the change
    \begin{align}\label{defmu}
    \alpha=\frac{(2-N)(\mu-\gamma)}{N-4-\gamma},
    \end{align}
    which implies $\gamma<\mu<N-4$ due to $2-N<\alpha<0$. Since $\eta=-\frac{\alpha (N-4+2\alpha-\beta)}{2(N-2+\alpha)}$, then with tedious calculations, it can be verified that
    \begin{align}\label{vecgm1}
    (2\eta+\alpha)(N+2\eta+\alpha+\gamma)-2\eta(N+\alpha+\eta-2)
    =-C_{\gamma,\mu,1},
    \end{align}
    and
    \begin{align}\label{vecgm2}
    \eta^2(N+\alpha+\eta-2)^2
    -\eta(N+\alpha+\eta-2)(N-4-\gamma)(2\eta+\alpha+\gamma+2)
    =C_{\gamma,\mu,2},
    \end{align}
    where $C_{\gamma,\mu,1}$ and $C_{\gamma,\mu,2}$ are given in \eqref{RSisc}. We verify these two identities in Remark \ref{remvecgm}. Thus, from \eqref{psle}, \eqref{psnb} and Theorem \ref{thmwrsi}, we deduce
    \begin{align}\label{2cknp}
    \int_{\mathbb{R}^N}|x|^{-\beta}|\mathrm{div} (|x|^{\alpha}\nabla u)|^2 \mathrm{d}x
    & \geq \int_{\mathbb{R}^N}\frac{|\Delta v|^2}{|x|^{\gamma}} \mathrm{d}x
    -C_{\gamma,\mu,1}\int_{\mathbb{R}^N}\frac{|\nabla v|^2}{|x|^{\gamma+2}} \mathrm{d}x
    +C_{\gamma,\mu,2}\int_{\mathbb{R}^N}\frac{|v|^2}{|x|^{\gamma+4}} \mathrm{d}x
    \nonumber\\
    &\geq \left(\frac{N-4-\mu}{N-4-\gamma}\right)^{3+\frac{2}{2^{**}_{0,\gamma}}}
    \mathcal{S}_{N,\gamma}
    \left(\int_{\mathbb{R}^N}
    |x|^{\gamma}|v|^{2^{**}_{0,\gamma}} \mathrm{d}x\right)^{\frac{2}{2^{**}_{0,\gamma}}}
    \nonumber\\
    &= \left(\frac{N-4-\mu}{N-4-\gamma}\right)^{3+\frac{2}{2^{**}_{0,\gamma}}}
    \mathcal{S}_{N,\gamma}
    \left(\int_{\mathbb{R}^N}
    |x|^{\beta}|u|^{2^{**}_{\alpha,\beta}} \mathrm{d}x
    \right)^{\frac{2}{2^{**}_{\alpha,\beta}}}.
    \end{align}
    Note that the first equality holds if $v$ is radial, and the second equality holds if and only if $v(x)=A|x|^{-\frac{\mu-\gamma}{2}}
    \left(\lambda+|x|^{\frac{(2+\gamma)(N-4-\mu)}{N-4-\gamma}}
    \right)^{-{\frac{N-4-\gamma}{2+\gamma}}}$ for $A\in\mathbb{R}$ and $\lambda>0$. Therefore, \eqref{2cknp} indicates that for all $u\in \mathcal{D}^{2,2}_{\alpha,\beta}(\mathbb{R}^N)$,
    \begin{align*}
    \int_{\mathbb{R}^N}|x|^{-\beta}|\mathrm{div} (|x|^{\alpha}\nabla u)|^2 \mathrm{d}x
    \geq \left(\frac{N-4-\mu}{N-4-\gamma}\right)^{3+\frac{2}{2^{**}_{0,\gamma}}}
    \mathcal{S}_{N,\gamma}
    \left(\int_{\mathbb{R}^N}
    |x|^{\beta}|u|^{2^{**}_{\alpha,\beta}} \mathrm{d}x
    \right)^{\frac{2}{2^{**}_{\alpha,\beta}}}.
    \end{align*}
    moreover, the constant $\left(\frac{N-4-\mu}{N-4-\gamma}\right)^{3+\frac{2}{2^{**}_{0,\gamma}}}
    \mathcal{S}_{N,\gamma}$ is sharp and equality holds if and only if
    \[
    u(x)=A|x|^\eta|x|^{-\frac{\mu-\gamma}{2}}
    \left(\lambda+|x|^{\frac{(2+\gamma)(N-4-\mu)}{N-4-\gamma}}
    \right)^{-{\frac{N-4-\gamma}{2+\gamma}}}
    =A(\lambda+|x|^{2+\beta-\alpha})
    ^{-\frac{N-4+2\alpha-\beta}{2+\beta-\alpha}}.
    \]
    Thus, we obtain the sharp second-order CKN type inequality \eqref{cknrs} with
    \begin{align*}
    \mathcal{S}_{N,\alpha,\beta}
    & =\left(\frac{N-4-\mu}{N-4-\gamma}\right)^{3+\frac{2}{2^{**}_{0,\gamma}}}
    \mathcal{S}_{N,\gamma}
    \\
    & = \left(\frac{N-4-\mu}{N-4-\gamma}\right)^{3+\frac{2}{2^{**}_{0,\gamma}}}
    \left(\frac{2}{2+\gamma}\right)
    ^{\frac{2(2+\gamma)}{N+\gamma}-4}
    \left(\frac{2\pi^{\frac{N}{2}}}{\Gamma(\frac{N}{2})}\right)
    ^{\frac{2(2+\gamma)}{N+\gamma}}
    \mathcal{B}\left(\frac{2(N+\gamma)}{2+\gamma}\right)
    \\
    & =\left(\frac{2}{2+\beta-\alpha}\right)
    ^{\frac{2(2+\beta-\alpha)}{N+\beta}-4}
    \left(\frac{2\pi^{\frac{N}{2}}}{\Gamma(\frac{N}{2})}\right)
    ^{\frac{2(2+\beta-\alpha)}{N+\beta}}
    \mathcal{B}\left(\frac{2(N+\beta)}{2+\beta-\alpha}\right),
    \end{align*}
    where $\mathcal{B}(M)=(M-4)(M-2)M(M+2)
    \left[\Gamma^2(\frac{M}{2})/(2\Gamma(M))\right]^{\frac{4}{M}}$ and $\Gamma$ is the Gamma function. Here, by the choices of $\gamma$ in \eqref{defgm} and $\mu$ in \eqref{defmu}, it is easy to verify that
    \begin{align*}
    & 3+\frac{2}{2^{**}_{0,\gamma}}
    =4-\frac{2(2+\gamma)}{N+\gamma}
    =4-\frac{2(2+\beta-\alpha)}{N+\beta},
    \\
    & \frac{2}{2+\gamma}\cdot\frac{N-4-\gamma}{N-4-\mu}
    =\frac{2}{2+\beta-\alpha},
    \\
    & \frac{2(N+\gamma)}{2+\gamma}=\frac{2(N+\beta)}{2+\beta-\alpha}.
    \end{align*}
    Now, the proof of Theorem \ref{thmcknrs} is completed.
\qed

\begin{remark}\label{remvecgm}\rm
In this remark, we will show the two identities \eqref{vecgm1} and \eqref{vecgm2}. Keeping in mind that:
    \begin{align*}
    \eta & =-\frac{\alpha (N-4+2\alpha-\beta)}{2(N-2+\alpha)},
    \\
    \gamma & =\frac{\beta(N-2)-\alpha N}{N-2+\alpha},
    \\
    \alpha & =\frac{(2-N)(\mu-\gamma)}{N-4-\gamma}\Leftrightarrow
    \mu=\frac{N-4-\gamma}{2-N}\alpha+\gamma.
    \end{align*}

    Firstly, we prove \eqref{vecgm1}. Putting $\eta$ and $\gamma$ into formulas,
    \begin{align*}
    & (2\eta+\alpha)(N+2\eta+\alpha+\gamma)-2\eta(N+\alpha+\eta-2)
    \nonumber\\
    & = \left[-\frac{\alpha (N-4+2\alpha-\beta)}{N-2+\alpha}+\alpha\right]
    \left[N-\frac{\alpha (N-4+2\alpha-\beta)}{N-2+\alpha}+\alpha
        +\frac{\beta(N-2)-\alpha N}{N-2+\alpha}\right]
    \nonumber\\
    & \quad + \frac{\alpha (N-4+2\alpha-\beta)}{N-2+\alpha}
    \left[N+\alpha-2-\frac{\alpha (N-4+2\alpha-\beta)}{2(N-2+\alpha)}\right]
    \nonumber\\
    & = \frac{\alpha(2+\beta-\alpha)\left[-\alpha^2+(2+\beta)\alpha
    +(N-2)(N+\beta)\right]}
    {(N-2+\alpha)^2}
    \nonumber\\
    & \quad +\frac{\alpha(N-4+2\alpha-\beta)
    \left[(3N-4+\beta)\alpha+2(N-2)^2\right]}
    {2(N-2+\alpha)^2}
    \nonumber\\
    & = \frac{\alpha}
    {2(N-2+\alpha)^2}\Big\{2(2+\beta-\alpha)[-\alpha^2+(2+\beta)\alpha
    +(N-2)(N+\beta)]
    \nonumber\\
    & \quad\quad+(N-4+2\alpha-\beta)[(3N-4+\beta)\alpha+2(N-2)^2]
    \Big\}
    \nonumber\\
    & = \frac{\alpha}
    {2(N-2+\alpha)^2}\Big\{2\alpha^3+2(3N-8-\beta)\alpha^2
    +[\beta^2-4(N-3)\beta+5N^2-28N+40]\alpha
    \nonumber\\
    & \quad\quad
    +2(N-2)[\beta^2+4\beta+N^2-4N+8]
    \Big\},
    \end{align*}
    furthermore, also putting $\mu$ into formulas,
    \begin{align*}
    C_{\gamma,\mu,1}
    & =\frac{N^2-4N+8+\gamma^2+4\gamma}{2(N-4-\gamma)^2}(\mu-\gamma)
    [2(N-4-\gamma)-(\mu-\gamma)]
    \nonumber\\
    & = -\frac{(N-2)^2+(\gamma+2)^2}{2(N-2)^2}\alpha
    [2(N-2)+\alpha]
    \nonumber\\
    & = \frac{\alpha[2(N-2)+\alpha]
    \left[(N-2+\alpha)^2+(2+\beta-\alpha)^2\right]}{2(N-2+\alpha)^2}
    \nonumber\\
    & = \frac{-\alpha}
    {2(N-2+\alpha)^2}\Big\{2\alpha^3+2(3N-8-\beta)\alpha^2
    +[\beta^2-4(N-3)\beta+5N^2-28N+40]\alpha
    \nonumber\\
    & \quad\quad
    +2(N-2)[\beta^2+4\beta+N^2-4N+8]
    \Big\}.
    \end{align*}
    Thus \eqref{vecgm1} holds.

    Secondly, we prove \eqref{vecgm2}. Putting $\eta$ and $\gamma$ into formulas,
    \begin{align*}
    & \eta^2(N+\alpha+\eta-2)^2
    -\eta(N+\alpha+\eta-2)(N-4-\gamma)(2\eta+\alpha+\gamma+2)
    \nonumber\\
    & = \frac{\alpha(N-4+2\alpha-\beta)^2}{16(N-2+\alpha)^2}
    \left[
    (3N-4+\beta)\alpha+2(N-2)^2
    \right]
    \nonumber\\
    & \quad\quad\times\left[
    -(N-4-\beta)\alpha^2-2(N-2)(N-6-2\beta)\alpha+4(N-2)^2(\beta+2)
    \right]
    \nonumber\\
    & = \frac{-\alpha(N-4+2\alpha-\beta)^2}{16(N-2+\alpha)^2}
    \Big\{
    (3N-4+\beta)(N-4-\beta)\alpha^3
    \nonumber\\
    & \quad\quad-4(N-2)[\beta^2+(3N-2)\beta-2(N^2-7N+8)]\alpha^2
    \nonumber\\
    & \quad\quad
    -4(N-2)^2\left[\beta^2+(5N-6)\beta-(N^2-14N+20)\alpha
    \right]
    -8(N-2)^4(\beta+2)
    \Big\}.
    \end{align*}
    Furthermore, by the choices of $\gamma$ and $\mu$,
    \begin{align*}
    C_{\gamma,\mu,2}& =\frac{N^2-4N+8+\gamma^2+4\gamma}{8(N-4-\gamma)^2}(\mu-\gamma)^2
    [2(N-4-\gamma)-(\mu-\gamma)]^2
    \\
    &\quad-\frac{(\mu-\gamma)^2[2(N-2)-(\mu-\gamma)]^2}{16}
    \\
    &\quad-\frac{(\mu-\gamma)[2(N-2)-(\mu-\gamma)]
    (N-4-\mu)(2+\gamma)}{4}
    \\
    & = \frac{\alpha(N-4-\gamma)^2}{16(N-2)^4}
    \Big\{
    2(N^2-4N+8+\gamma^2+4\gamma)\alpha[2(N-2)+\alpha]^2
    \\
    &\quad\quad+[2(N-2)^2+(N-4-\gamma)\alpha]
    \\
    &\quad\quad\quad
    \times\left\{
    -[2(N-2)^2+(N-4-\gamma)\alpha]\alpha
    +4(N-2)(N-2+\alpha)(2+\gamma)
    \right\}
    \Big\}.
    \end{align*}
    Note that
    \begin{align*}
    \frac{(N-4-\gamma)^2}{(N-2)^4}=\frac{[(N-2)(N-4+2\alpha-\beta)]^2}
    {(N-2+\alpha)^2(N-2)^4}=\frac{(N-4+2\alpha-\beta)^2}{(N-2+\alpha)^2},
    \end{align*}
    and
    \begin{align*}
    N^2-4N+8+\gamma^2+4\gamma
    =\frac{(N-2)^2}{(N-2+\alpha)^2}
    \left[
    2\alpha^2+2(N-4-\beta)\alpha+N^2-4N+8+\beta^2+4\beta
    \right],
    \end{align*}
    moreover,
    \begin{align*}
    & [2(N-2)^2+(N-4-\gamma)\alpha]\left\{
    [2(N-2)^2+(N-4-\gamma)\alpha]\alpha
    -4(N-2)(N-2+\alpha)(2+\gamma)
    \right\}
    \\
    & = \frac{(N-2)^2}{(N-2+\alpha)^2}
    \left[
    2\alpha^2+(3N-8-\beta)\alpha+2(N-2)^2
    \right]
    \\
    & \quad\quad \times
    \left[
    2\alpha^3+(7N-16-\beta)\alpha^2+2(N-2)(3N-10-2\beta)\alpha
    -4(N-2)^2(2+\beta)
    \right].
    \end{align*}
    Therefore,
    \begin{align*}
    C_{\gamma,\mu,2}
    & = \frac{\alpha(N-4+2\alpha-\beta)^2}{16(N-2+\alpha)^2}
    \Big\{
    2\alpha[\alpha^2+4(N-2)\alpha+4(N-2)^2]
    \\
    &\quad\quad\quad
    \times[2\alpha^2+2(N-4-\beta)\alpha+N^2-4N+8+\beta^2+4\beta]
    \\
    &\quad\quad
    -[2\alpha^2+(3N-8-\beta)\alpha+2(N-2)^2]
    \\
    &\quad\quad\quad
    \times[2\alpha^3+(7N-16-\beta)\alpha^2
    +2(N-2)(3N-10-2\beta)\alpha
    -4(N-2)^2(2+\beta)]
    \Big\}
    \\
    & = \frac{-\alpha(N-4+2\alpha-\beta)^2}{16(N-2+\alpha)^2}
    \Big\{
    (3N-4+\beta)(N-4-\beta)\alpha^3
    \nonumber\\
    & \quad\quad-4(N-2)[\beta^2+(3N-2)\beta-2(N^2-7N+8)]\alpha^2
    \nonumber\\
    & \quad\quad
    -4(N-2)^2\left[\beta^2+(5N-6)\beta-(N^2-14N+20)\alpha
    \right]
    -8(N-2)^4(\beta+2)
    \Big\}.
    \end{align*}
    Thus \eqref{vecgm2} holds.
\end{remark}

\section{{\bfseries Sharp singular second-order CKN type inequality}}\label{sectcknpss}

At first, we establish the equivalence relationship of two second-order CKN type inequalities \eqref{cknrs} and \eqref{cknrss}.

    \begin{lemma}\label{lemertwo}
    Assume that $N\geq 5$, $\alpha>2-N$ and $\frac{N-4}{N-2}\alpha-4 \leq \beta\leq \alpha -2$. For each $u\in \mathcal{D}^{2,2}_{\alpha,\beta}(\mathbb{R}^N\setminus\{0\})$, let $u(x)=|x|^{\eta}v(x)$ with $\eta=2+\beta-\alpha$, then
    \begin{align}\label{ertwo}
    \int_{\mathbb{R}^N}|x|^{\xi}
    |u|^{\bar{2}^{**}_{\alpha,\beta}} \mathrm{d}x
    & =\int_{\mathbb{R}^N}|x|^{\bar{\beta}}
    |v|^{2^{**}_{\alpha,\bar{\beta}}} \mathrm{d}x,
    \\ \label{ertwo2}
    \int_{\mathbb{R}^N}|x|^{-\beta}|\mathrm{div} (|x|^{\alpha}\nabla u)|^2 \mathrm{d}x
    & = \int_{\mathbb{R}^N}|x|^{-\bar{\beta}}|\mathrm{div} (|x|^{\alpha}\nabla v)|^2 \mathrm{d}x,
    \end{align}
    where $\bar{2}^{**}_{\alpha,\beta}
    =\frac{2(N+\xi)}{N+2\alpha-\beta-4}$ with $\xi=\frac{(N+2\alpha-\beta-4)^2}{N+\beta}-N$, and $\bar{\beta}=2\alpha-\beta-4$.
    \end{lemma}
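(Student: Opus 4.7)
The plan is to prove \eqref{ertwo} and \eqref{ertwo2} in turn. Identity \eqref{ertwo} is a pure algebraic substitution: since $u = |x|^{\eta}v$ with $\eta = 2+\beta-\alpha$, the integrand on the left becomes $|x|^{\xi + \eta\bar{2}^{**}_{\alpha,\beta}}|v|^{\bar{2}^{**}_{\alpha,\beta}}$, so I only need to check two algebraic identities, $\bar{2}^{**}_{\alpha,\beta} = 2^{**}_{\alpha,\bar{\beta}}$ and $\xi + \eta\,\bar{2}^{**}_{\alpha,\beta} = \bar{\beta}$. Setting $M := N+2\alpha-\beta-4$, the definitions give $N+\xi = M^{2}/(N+\beta)$ and $M + 2\eta = N+\beta$, from which both identities follow immediately.

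For \eqref{ertwo2}, I first expand $\mathrm{div}(|x|^{\alpha}\nabla u)$ in terms of $v$ via the product rule. Using $\alpha+\eta = 2+\beta$ and $\alpha+\eta-2 = \beta$, a short computation gives
\begin{equation*}
\mathrm{div}(|x|^{\alpha}\nabla u) = |x|^{2+\beta}\Delta v + (4+2\beta-\alpha)|x|^{\beta}(x\cdot\nabla v) + \eta(N+\beta)|x|^{\beta}v.
\end{equation*}
Set $w := |x|^{-\beta/2}\mathrm{div}(|x|^{\alpha}\nabla u)$ and $w^{\sharp} := |x|^{-\bar{\beta}/2}\mathrm{div}(|x|^{\alpha}\nabla v)$; because $\alpha-\bar{\beta}/2 = 2+\beta/2$ and $\alpha-2-\bar{\beta}/2 = \beta/2$, the two quantities share the same leading term $|x|^{2+\beta/2}\Delta v$, and their difference is purely first order in $v$:
\begin{equation*}
w - w^{\sharp} = 2\eta\,|x|^{\beta/2}(x\cdot\nabla v) + \eta(N+\beta)|x|^{\beta/2}v.
\end{equation*}
The identity \eqref{ertwo2} is therefore equivalent to $\int_{\mathbb{R}^{N}}(w^{2}-(w^{\sharp})^{2})\,\mathrm{d}x = 0$.

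The main step is to prove this last identity via the factorization $w^{2}-(w^{\sharp})^{2} = (w-w^{\sharp})(w+w^{\sharp})$. Expanding the product yields six integrals involving $\Delta v$, $x\cdot\nabla v$ and $v$, with weights $|x|^{\beta}$ or $|x|^{2+\beta}$. The three that contain $\Delta v$ are reduced via the standard integration-by-parts identities
\begin{equation*}
\int|x|^{\beta}v(x\cdot\nabla v)\,\mathrm{d}x = -\tfrac{N+\beta}{2}\int|x|^{\beta}v^{2}\,\mathrm{d}x,
\end{equation*}
\begin{equation*}
\int|x|^{2+\beta}v\Delta v\,\mathrm{d}x = -\int|x|^{2+\beta}|\nabla v|^{2}\,\mathrm{d}x + \tfrac{(2+\beta)(N+\beta)}{2}\int|x|^{\beta}v^{2}\,\mathrm{d}x,
\end{equation*}
\begin{equation*}
\int|x|^{2+\beta}(x\cdot\nabla v)\Delta v\,\mathrm{d}x = \tfrac{N+\beta}{2}\int|x|^{2+\beta}|\nabla v|^{2}\,\mathrm{d}x - (2+\beta)\int|x|^{\beta}(x\cdot\nabla v)^{2}\,\mathrm{d}x.
\end{equation*}
After substitution, everything collapses into a linear combination of the three weighted integrals $\int|x|^{2+\beta}|\nabla v|^{2}$, $\int|x|^{\beta}(x\cdot\nabla v)^{2}$ and $\int|x|^{\beta}v^{2}$. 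The coefficient of the first is $2(N+\beta)-2(N+\beta)=0$, the coefficient of the second is $-4(2+\beta)+4(2+\beta)=0$, and the coefficient of the third cancels after pairing the two $\eta(N+\beta)^{2}$ contributions. This cancellation is forced precisely by the choice $\eta = 2+\beta-\alpha$.

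The main obstacle is simply the bookkeeping in this last step; no new analytic input is needed beyond the three integration-by-parts formulas above, which are first justified for $u\in C^{\infty}_{0}(\mathbb{R}^{N}\setminus\{0\})$ and then extended to all of $\mathcal{D}^{2,2}_{\alpha,\beta}(\mathbb{R}^{N}\setminus\{0\})$ by density in the norm $\|\cdot\|_{\mathcal{D}^{2,2}_{\alpha,\beta}}$.
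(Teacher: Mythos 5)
Your proposal is correct and follows essentially the same route as the paper: the substitution $u=|x|^{\eta}v$ with $\eta=2+\beta-\alpha$, expansion of $\mathrm{div}(|x|^{\alpha}\nabla u)$, and the same integration-by-parts identities, with all coefficients cancelling exactly because of the choices $\eta=2+\beta-\alpha$ and $\bar{\beta}=2\alpha-\beta-4$. The only cosmetic difference is that you organize the verification of \eqref{ertwo2} through the factorization $w^{2}-(w^{\sharp})^{2}=(w-w^{\sharp})(w+w^{\sharp})$, whereas the paper expands both sides separately (reusing the expansion from the proof of Theorem \ref{thmcknrs}) and matches the four coefficients, the $|v|^{2}/|x|^{\gamma+4}$ coefficient vanishing identically.
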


    \begin{proof}
    By the choices of $\eta$ and $\bar{\beta}$, it is easy to verify that
    \begin{align*}
    \bar{2}^{**}_{\alpha,\beta}
    & =\frac{2(N+\xi)}{N+2\alpha-\beta-4}
    =\frac{2(N+2\alpha-\beta-4)}{N+\beta}
    =2^{**}_{\alpha,\bar{\beta}}
    =\frac{2(N+\bar{\beta})}{N+2\alpha-\bar{\beta}-4},
    \\
    \bar{\beta}
    & =\xi+\eta\cdot\bar{2}^{**}_{\alpha,\beta},
    \end{align*}
    thus \eqref{ertwo} holds.

    Next, we prove \eqref{ertwo2}. Since $u(x)=|x|^{\eta}v(x)$ with $\eta=2+\beta-\alpha$, then as in the proof of Theorem \ref{thmcknrs} we have
    \begin{align}\label{ertwo2t}
    & \int_{\mathbb{R}^N}|x|^{-\beta}|\mathrm{div} (|x|^{\alpha}\nabla u)|^2 \mathrm{d}x
    \nonumber\\
    & = \int_{\mathbb{R}^N}
    |x|^{2(\eta+\alpha)-\beta}[\Delta v +(2\eta+\alpha)|x|^{-2}(x\cdot \nabla v)
    +\eta(N+\alpha+\eta-2)|x|^{-2}v]^2 \mathrm{d}x
    \nonumber\\
    & = \int_{\mathbb{R}^N}\frac{|\Delta v|^2}{|x|^\gamma} \mathrm{d}x
    +(2\eta+\alpha)(2\eta+\alpha+2\gamma+4)
    \int_{\mathbb{R}^N}\frac{(x\cdot \nabla v)^2}{|x|^{\gamma+4}}\mathrm{d}x
    \nonumber\\
    &\quad +\left[(2\eta+\alpha)(N-4-\gamma)
    -2\eta(N+\alpha+\eta-2)\right]\int_{\mathbb{R}^N}\frac{|\nabla v|^2}{|x|^{\gamma+2}}\mathrm{d}x
    \nonumber\\
    &\quad +\left[\eta^2(N+\alpha+\eta-2)^2
    -\eta(N+\alpha+\eta-2)(N-4-\gamma)(2\eta+\alpha+\gamma+2)\right]
    \int_{\mathbb{R}^N}\frac{|v|^2}{|x|^{\gamma+4}}\mathrm{d}x,
    \end{align}
    where $\gamma=\beta-2(\eta+\alpha)$, and
    \begin{align}\label{ertwo2t2}
    \int_{\mathbb{R}^N}|x|^{-\bar{\beta}}|\mathrm{div} (|x|^{\alpha}\nabla v)|^2 \mathrm{d}x
    & = \int_{\mathbb{R}^N}\frac{|\Delta v|^2}{|x|^{\bar{\beta}-2\alpha}} \mathrm{d}x
    +\alpha(2\bar{\beta}-3\alpha+4)
    \int_{\mathbb{R}^N}\frac{(x\cdot \nabla v)^2}{|x|^{\bar{\beta}-2\alpha+4}}\mathrm{d}x
    \nonumber\\
    &\quad +\alpha(N-4+2\alpha-\bar{\beta})\int_{\mathbb{R}^N}\frac{|\nabla v|^2}{|x|^{\bar{\beta}-2\alpha+2}}\mathrm{d}x.
    \end{align}
    It is easy to verify that
    \begin{align*}
    & \gamma=\beta-2(\eta+\alpha)=\bar{\beta}-2\alpha,
    \\
    &(2\eta+\alpha)(2\eta+\alpha+2\gamma+4)
    =\alpha(2\bar{\beta}-3\alpha+4),
    \\
    & (2\eta+\alpha)(N-4-\gamma)
    -2\eta(N+\alpha+\eta-2)=\alpha(N-4+2\alpha-\bar{\beta}),
    \\
    & \eta^2(N+\alpha+\eta-2)^2
    -\eta(N+\alpha+\eta-2)(N-4-\gamma)(2\eta+\alpha+\gamma+2)=0,
    \end{align*}
    therefore, \eqref{ertwo2} holds.
    \end{proof}

    \begin{remark}\label{remefk}\rm
    The key step of showing above equivalent form is choosing suitable $\eta$. The idea is that, comparing \eqref{ertwo2t} and \eqref{ertwo2t2}, we notice that it must satisfy
    \[
    \eta^2(N+\alpha+\eta-2)^2
    -\eta(N+\alpha+\eta-2)(N-4-\gamma)(2\eta+\alpha+\gamma+2)=0,
    \]
    where $\gamma=\beta-2(\eta+\alpha)$, then
    \[
    \eta_1=0,\quad \eta_2=2-N-\alpha,\quad \eta_3=2+\beta-\alpha,\quad \eta_4=\beta-2\alpha+4-N.
    \]
    Furthermore, in order to establish \eqref{ertwo2}, we need $\bar{2}^{**}_{\alpha,\beta}=2^{**}_{\alpha,\bar{\beta}}$, that is,
    \[
    \frac{2(N+2\alpha-\beta-4)}{N+\beta}
    =\frac{2(N+\bar{\beta})}{N+2\alpha-\bar{\beta}-4},
    \]
    which implies $\bar{\beta}=2\alpha-\beta-4$. Then by $\beta-2(\eta+\alpha)=\bar{\beta}-2\alpha$, we should choose $\eta=2+\beta-\alpha$.
    \end{remark}

    Now, based on the symmetry result of Theorem \ref{thmcknrs}, we are ready to show the symmetry of extremal functions for the singular second-order CKN type inequality \eqref{cknrss}.

\vskip0.25cm

\noindent{\bf \em Proof of Theorem \ref{thmcknrss}}. Since $\frac{N-4}{N-2}\alpha-4 \leq \beta<\alpha -2$, then for $\bar{\beta}=2\alpha-\beta-4$ we have
\[
\alpha-2<\bar{\beta}\leq \frac{N}{N-2}\alpha.
\]
Then by Lemma \ref{lemertwo} and Theorem \ref{thmcknrs}, we can directly obtain the singular second-order CKN type inequality \eqref{cknrss} with sharp constant
    \begin{align*}
    \overline{\mathcal{S}}_{N,\alpha,\beta}
    & =\mathcal{S}_{N,\alpha,\bar{\beta}}
    \\
    & =\left(\frac{2}{2+\bar{\beta}-\alpha}\right)
    ^{\frac{2(2+\bar{\beta}-\alpha)}{N+\bar{\beta}}-4}
    \left(\frac{2\pi^{\frac{N}{2}}}{\Gamma(\frac{N}{2})}\right)
    ^{\frac{2(2+\bar{\beta}-\alpha)}{N+\bar{\beta}}}
    \mathcal{B}\left(\frac{2(N+\bar{\beta})}{2+\bar{\beta}-\alpha}\right)
    \\
    & =\left(\frac{2}{\alpha-\beta-2}\right)
    ^{\frac{2(\alpha-\beta-2)}{N+2\alpha-\beta-4}-4}
    \left(\frac{2\pi^{\frac{N}{2}}}{\Gamma(\frac{N}{2})}\right)
    ^{\frac{2(\alpha-\beta-2)}{N+2\alpha-\beta-4}}
    \mathcal{B}\left(\frac{2(N+2\alpha-\beta-4)}{\alpha-\beta-2}\right),
    \end{align*}
    where $\mathcal{B}(M)=(M-4)(M-2)M(M+2)
    \left[\Gamma^2(\frac{M}{2})/(2\Gamma(M))\right]^{\frac{4}{M}}$, and equality in \eqref{cknrss} holds if and only if
    \begin{align*}
    u(x)=A|x|^\eta(\lambda+|x|^{2+\bar{\beta}-\alpha})
    ^{-\frac{N-4+2\alpha-\bar{\beta}}{2+\bar{\beta}-\alpha}}
    =A|x|^{2+\beta-\alpha}(1+|x|^{\alpha-\beta-2})
    ^{-\frac{N+\beta}{\alpha-\beta-2}},
    \end{align*}
    for $A\in\mathbb{R}$ and $\lambda>0$, where $\eta=2+\beta-\alpha$. Now, the proof of Theorem \ref{thmcknrss} is completed.
\qed

\section{{\bfseries Weak form Hardy-Rellich inequality}}\label{sectwhri}

In this section, we establish the weak form weighted Hardy-Rellich inequality \eqref{hriw} with explicit sharp constant, by following the arguments as those in \cite{GM11,TZ07}, and also \cite{Ca20}.

 \vskip0.25cm

\noindent{\bf \em Proof of Theorem \ref{thmhriw}.} The proof follows in several steps as follows.

 \vskip0.25cm
\noindent{\bfseries Step \uppercase\expandafter{\romannumeral 1}: Spherical coordinates}
 \vskip0.25cm
    For each $u\in C^\infty_0(\mathbb{R}^N)$, let us make the standard spherical decomposition of $u$ as in the proof of Lemma \ref{lemtl}, namely
    \begin{equation}\label{Ppwhl2defvdp}
    u(x)=u(r,\sigma)=\sum^{\infty}_{k=0}f_k(r)\Psi_k(\sigma),
    \end{equation}
    where $r=|x|$ and $\sigma=\frac{x}{|x|}\in \mathbb{S}^{N-1}$, and
    \begin{equation*}
    f_k(r)=\int_{\mathbb{S}^{N-1}}u(r,\sigma)\Psi_k(\sigma)
    \mathrm{d}\sigma.
    \end{equation*}
    Here $\Psi_k(\sigma)$ denotes the $k$-th spherical harmonic. It is well known that
    \begin{align}\label{Ppwhl2deflklwp}
    \Delta (f_k(r)\Psi_k(\sigma))
    =\left(f''_k+\frac{N-1}{r}f'_k
    -\frac{c_k}{r^2}f_k\right)\Psi_k.
    \end{align}
    It is easy to verify that
    \begin{equation*}
    \frac{\partial (f_k(r)\Psi_k(\sigma))}{\partial x_i}=f'_k\frac{x_i}{r}\Psi_k+\varphi_k
    \frac{\partial\Psi_k}{\partial \sigma_l}\frac{\partial\sigma_l}{\partial x_i},
    \end{equation*}
    hence
    \begin{equation}\label{Ppwhl2deflklnp}
    \begin{split}
    x\cdot\nabla (f_k(r)\Psi_k(\sigma))=\sum^{N}_{i=1}x_i\frac{\partial (f_k(r)\Psi_k(\sigma))}{\partial x_i}=f'_kr\Psi_k+f_k\frac{\partial\Psi_k}{\partial \sigma_l}\sum^{N}_{i=i}\frac{\partial\sigma_l}{\partial x_i}x_i=f'_kr\Psi_k,
    \end{split}
    \end{equation}
    due to $\sum^N_{i=1}\frac{\partial\sigma_l}{\partial x_i}x_i=0$ for every $l$. Furthermore, the functions $f_k(r)\in C^\infty_0([0,\infty))$ satisfying $f_k(r)=O(r^k)$ and $f_k'(r)=O(r^{k-1})$ as $r\to 0$. For simplicity, we also let
    \[
    \int_{\mathbb{S}^{N-1}}|\Psi_k|^2\mathrm{d}\sigma=1,\quad \mbox{for all}\quad k\geq 0,
    \]
    so that
    \begin{align}\label{sc1}
    \int_{\mathbb{R}^N}|x|^{-2a-4}(x\cdot \nabla u)^2 \mathrm{d}x
    =\sum^\infty_{k=0}\int^\infty_0(f_k')^2 r^{N-3-2a}\mathrm{d}r,
    \end{align}
    and
    \begin{align}\label{sc2}
    \int_{\mathbb{R}^{N}}|x|^{-2a}|\Delta u|^2\mathrm{d}x
    & =\sum^\infty_{k=0}\int^\infty_0\left(f''_k+\frac{N-1}{r}f'_k
    -\frac{c_k}{r^2}f_k\right)^2 r^{N-1-2a}\mathrm{d}r
    \nonumber\\
    & =\sum^\infty_{k=0}\bigg[
    \int^\infty_0\left(f_k''\right)^2 r^{N-1-2a}\mathrm{d}r
    \nonumber\\&\quad\quad +[(1+2a)(N-1)+2c_k]\int^\infty_0\left(f_k'\right)^2 r^{N-3-2a}\mathrm{d}r
    \nonumber\\&\quad\quad +c_k[c_k+2(1+a)(N-4-2a)]\int^\infty_0f_k^2 r^{N-5-2a}\mathrm{d}r
    \bigg].
    \end{align}
    We refer to \cite[Section 2.2]{TZ07} for details, and also \cite[Appendix B]{GM11}.

 \vskip0.25cm
\noindent{\bfseries Step \uppercase\expandafter{\romannumeral 2}: Weighted one-dimensional Hardy and Rellich inequalities}
 \vskip0.25cm

 We recall the following Hardy and Rellich type inequalities with sharp constants:
    \begin{align}\label{odwri}
    \int^\infty_0\left(f_k''\right)^2 r^{N-1-2a}\mathrm{d}r
    \geq & \left(\frac{N-2-2a}{2}\right)^2\int^\infty_0\left(f_k'\right)^2 r^{N-3-2a}\mathrm{d}r,\quad \forall k\geq 0,
    \\ \label{odwhi}
    \int^\infty_0\left(f_k'\right)^2 r^{N-3-2a}\mathrm{d}r
    \geq & \left(\frac{N-4-2a}{2}\right)^2\int^\infty_0f_k^2 r^{N-5-2a}\mathrm{d}r,\quad \forall k\geq 1.
    \end{align}
    For the sake of clarity let us give a few lines proof of both \eqref{odwri} and \eqref{odwhi}. Indeed, for each $v\in C^\infty_0(\mathbb{R}^N)$, from the identity
    \begin{align*}
    \int_{\mathbb{R}^N}|x|^{-2-2c}x\cdot \nabla v^2\mathrm{d}x
    =-(N-2-2c)\int_{\mathbb{R}^N}|x|^{-2-2c}v^2\mathrm{d}x
    =2\int_{\mathbb{R}^N}v|x|^{-2-2c}(x\cdot \nabla v)\mathrm{d}x,
    \end{align*}
    for any $c\in\mathbb{R}$, by Cauchy-Schwarz inequality we have
    \begin{align*}
    \int_{\mathbb{R}^N}|x|^{-2-2c}(x\cdot \nabla v)^2\mathrm{d}x
    \geq \left(\frac{N-2-2c}{2}\right)^2
    \int_{\mathbb{R}^N}|x|^{-2-2c}v^2\mathrm{d}x.
    \end{align*}
    Thus, let $v(x)=f_k'(|x|)$ with $c=a$ we deduce \eqref{odwri}, and let $v(x)=f_k(|x|)$ with $c=a+1$ we deduce \eqref{odwhi}. Next, we show the constants in \eqref{odwri} and \eqref{odwhi} are optimal. If $a\neq \frac{N-4}{2}$, it is enough to show that the family of functions
    \begin{align}\label{deftf}
    \left\{f_\epsilon(r):=r^{-(N-4-2a)/2-\epsilon}g(r)\right\}_{\epsilon>0}
    \end{align}
    is a minimizing sequence for $\left(\frac{N-2-2a}{2}\right)^2$ in \eqref{odwri} and also for $\left(\frac{N-4-2a}{2}\right)^2$ in \eqref{odwhi}, where $g$ is a sufficiently smooth function satisfying $g(r)=0$ if $0\leq r\leq 1$ and $g(r)=1$ if $r\geq 2$. To resume, one can easily check that
    \begin{align}\label{fe1}
    \frac{\int^\infty_0\left(f_\epsilon''\right)^2 r^{N-1-2a}\mathrm{d}r}
    {\int^\infty_0\left(f_\epsilon'\right)^2 r^{N-3-2a}\mathrm{d}r}
    & = \frac{\left(-\frac{N-4-2a}{2}-\epsilon\right)^2
    \left(-\frac{N-2-2a}{2}-\epsilon\right)^2
    \left(\frac{2^{-2\epsilon}}{2\epsilon}+O(1)\right)+O(1)}
    {\left(-\frac{N-4-2a}{2}-\epsilon\right)^2
    \left(\frac{2^{-2\epsilon}}{2\epsilon}+O(1)\right)+O(1)}
    \nonumber\\ & \to \left(\frac{N-2-2a}{2}\right)^2,\quad \mbox{as}\ \epsilon\to 0,
    \end{align}
    and
    \begin{align}\label{fehi}
    \frac{\int^\infty_0\left(f_\epsilon'\right)^2 r^{N-3-2a}\mathrm{d}r}
    {\int^\infty_0f_\epsilon^2 r^{N-5-2a}\mathrm{d}r}
    & = \frac{\left(-\frac{N-4-2a}{2}-\epsilon\right)^2
    \left(\frac{2^{-2\epsilon}}{2\epsilon}+O(1)\right)+O(1)}
    {\frac{2^{-2\epsilon}}{2\epsilon}+O(1)}
    \nonumber\\ & \to \left(\frac{N-4-2a}{2}\right)^2,\quad \mbox{as}\ \epsilon\to 0.
    \end{align}
    Note that \eqref{fehi} also holds if $a=\frac{N-4}{2}$.
    While, for the sharpness of inequality \eqref{odwri}, if $a=\frac{N-4}{2}$ then the nontermination in \eqref{fe1} is $0/0$, so we need to consider another sequence
    \begin{align}\label{deftfe}
    \left\{\bar{f}_\epsilon(r):=\int^\infty_r h_\epsilon(t)\mathrm{d}t\right\}_{\epsilon>0}\quad \mbox{with}\quad h_\epsilon(t)=t^{-1-\epsilon}g(t),
    \end{align}
    and it is enough to show that the family of functions $\{\bar{f}_\epsilon\}_{\epsilon>0}$ is a minimizing sequence for $\left(\frac{N-2-2a}{2}\right)^2=1$ in \eqref{odwri}, that is
    \begin{align}\label{fe1e}
    \frac{\int^\infty_0\left(\bar{f}_\epsilon''\right)^2 r^{3}\mathrm{d}r}
    {\int^\infty_0\left(\bar{f}_\epsilon'\right)^2 r\mathrm{d}r}
    & = \frac{\int^\infty_0\left(h_\epsilon'\right)^2 r^{3}\mathrm{d}r}
    {\int^\infty_0 h_\epsilon^2 r\mathrm{d}r}
    = \frac{\left(-1-\epsilon\right)^2
    \left(\frac{2^{-2\epsilon}}{2\epsilon}+O(1)\right)+O(1)}
    {\frac{2^{-2\epsilon}}{2\epsilon}+O(1)}
    \to 1,
    \end{align}
    as $\epsilon\to 0$.

 \vskip0.25cm
\noindent{\bfseries Step \uppercase\expandafter{\romannumeral 3}: Lower bound of optimal constant}
 \vskip0.25cm

    We will make usage of Step \uppercase\expandafter{\romannumeral 1} and Step \uppercase\expandafter{\romannumeral 2} when comparing both integral in \eqref{hriw}. First, we split the term on the right hand side in \eqref{sc2} into the sum $I_1+I_2$ where
    \begin{align*}
    I_1=\sum^\infty_{k=0}\left[
    \int^\infty_0\left(f_k''\right)^2 r^{N-1-2a}\mathrm{d}r
    +(1+2a)(N-1)\int^\infty_0\left(f_k'\right)^2 r^{N-3-2a}\mathrm{d}r
    \right]
    \end{align*}
    denotes the radial part of expansion in \eqref{hriw}, whereas
    \begin{align*}
    I_2=\sum^\infty_{k=1}\left[
    2c_k\int^\infty_0\left(f_k'\right)^2 r^{N-3-2a}\mathrm{d}r
    +c_k[c_k+2(1+a)(N-4-2a)]\int^\infty_0f_k^2 r^{N-5-2a}\mathrm{d}r
    \right]
    \end{align*}
    is its spherical part. Then, from \eqref{odwri} we have
    \begin{align}\label{I1b}
    I_1\geq \left(\frac{N+2a}{2}\right)^2\sum^\infty_{k=0}
    \int^\infty_0\left(f_k'\right)^2 r^{N-3-2a}\mathrm{d}r,
    \end{align}
    and from \eqref{odwhi},
    \begin{align}\label{I2b}
    I_2
    & \geq \sum^\infty_{k=0} c_{k}
    \left[
    c_{k}+\frac{(N-4-2a)^2}{2}+2(1+a)(N-4-2a)\right]
    \int^\infty_0f_{k}^2 r^{N-5-2a}\mathrm{d}r
    \nonumber \\
    & = \sum^\infty_{k=0} c_{k}
    \left[
    c_{k}+\frac{(N-4-2a)(N+2a)}{2}\right]
    \int^\infty_0f_{k}^2 r^{N-5-2a}\mathrm{d}r.
    \end{align}

    Let us first consider $N\geq 2$.
    If
    \[
    N-1+\frac{(N-4-2a)(N+2a)}{2}\geq 0,
    \]
    which implies
    \begin{align}\label{cra1}
    \frac{-2-\sqrt{N^2-2N+2}}{2}\leq a\leq \frac{-2+\sqrt{N^2-2N+2}}{2},
    \end{align}
    then
    \[
    c_k\left[c_k+\frac{(N-4-2a)(N+2a)}{2}\right]\geq 0,\quad \forall k\geq 0,
    \]
    which indicates $I_2\geq 0$ then we always have $C(N,a)\geq \left(\frac{N+2a}{2}\right)^2$. Otherwise, if
    \begin{align}\label{cra2}
    a<\frac{-2-\sqrt{N^2-2N+2}}{2}\quad \mbox{or}\quad \frac{-2+\sqrt{N^2-2N+2}}{2}<a<\frac{N-2}{2},
    \end{align}
    then
    \begin{align*}
    c_{\bar{k}}\left[c_{\bar{k}}\frac{(N-4-2a)(N+2a)}{2}\right]<0,\quad \mbox{for some}\ \bar{k}\geq 1.
    \end{align*}
    Note that
    \[
    \frac{-2-\sqrt{N^2-2N+2}}{2}< \frac{N-4}{2}
    <\frac{-2+\sqrt{N^2-2N+2}}{2}< \frac{N-2}{2},
    \]
    then the assumption \eqref{cra2} implies $a\neq \frac{N-4}{2}$, and from \eqref{odwhi} we have
    \begin{align*}
    & 2c_{\bar{k}}\int^\infty_0\left(f_{\bar{k}}'\right)^2 r^{N-3-2a}\mathrm{d}r
    +c_{\bar{k}}[c_{\bar{k}}+2(1+a)(N-4-2a)]\int^\infty_0f_{\bar{k}}^2 r^{N-5-2a}\mathrm{d}r
    \\
    & \geq \left(\frac{2}{N-4-2a}\right)^2c_{\bar{k}}
    \left[
    c_{\bar{k}}+\frac{(N-4-2a)(N+2a)}{2}\right]
    \int^\infty_0\left(f_{\bar{k}}'\right)^2 r^{N-3-2a}\mathrm{d}r,
    \end{align*}
    thus combining with \eqref{I1b}-\eqref{I2b} we have
    \begin{align*}
    C(N,a)
    & \geq  \left(\frac{N+2a}{2}\right)^2
    +\left(\frac{2}{N-4-2a}\right)^2
    \inf_{k\in\mathbb{N}}
    c_k\left[c_k+\frac{(N-4-2a)(N+2a)}{2}\right]
    \\
    & = \left(\frac{2}{N-4-2a}\right)^2
    \inf\limits_{k\in\mathbb{N}}
    \left(k+\frac{N}{2}+a\right)^2\left(k+\frac{N-4}{2}-a\right)^2.
    \end{align*}

    Then, let us consider the case $N=1$ which is similar to the case $N\geq 2$ with minor changes. If
    \[
    2+\frac{(-3-2a)(1+2a)}{2}\geq 0,\quad a<-\frac{1}{2},
    \]
    which implies
    \begin{align}\label{cra11}
    \frac{-2-\sqrt{5}}{2}\leq a<-\frac{1}{2},
    \end{align}
    then
    \[
    c_k\left[c_k+\frac{(-3-2a)(1+2a)}{2}\right]\geq 0,\quad \forall k\geq 0,
    \]
    thus combining with \eqref{I1b}-\eqref{I2b} we also always have $C(1,a)\geq \left(\frac{1+2a}{2}\right)^2$. Otherwise, if
    \begin{align}\label{cra22}
    a<\frac{-2-\sqrt{5}}{2},
    \end{align}
    then
    \begin{align*}
    c_{\bar{k}}\left[c_{\bar{k}}+\frac{(-3-2a)(1+2a)}{2}\right]<0,\quad \mbox{for some}\ \bar{k}\geq 2,
    \end{align*}
    thus we also have
    \begin{align*}
    C(1,a)
    \geq \left(\frac{2}{3+2a}\right)^2
    \inf\limits_{k\in\mathbb{N}}
    \left(k+\frac{1}{2}+a\right)^2\left(k-\frac{3}{2}-a\right)^2.
    \end{align*}

 \vskip0.25cm
\noindent{\bfseries Step \uppercase\expandafter{\romannumeral 4}: Explicate form of optimal constant}
 \vskip0.25cm

    Now, we consider the sharpness of $C(N,a)$. We claim that for $N\geq 2$,
    \begin{align}\label{defsc1}
    C(N,a)=\left(\frac{N+2a}{2}\right)^2\quad\text{if }\ \frac{-2-\sqrt{N^2-2N+2}}{2}\leq a\leq \frac{-2+\sqrt{N^2-2N+2}}{2},
    \end{align}
    and
    \begin{align}\label{defsc2}
    C(N,a)=\left(\frac{2}{N-4-2a}\right)^2\inf\limits_{k\in\mathbb{N}}
    \left(k+\frac{N}{2}+a\right)^2\left(k+\frac{N-4}{2}-a\right)^2
    \end{align}
    if $a<\frac{-2-\sqrt{N^2-2N+2}}{2}$ or $\frac{-2+\sqrt{N^2-2N+2}}{2}<a<\frac{N-2}{2}$, furthermore,
    \begin{align}\label{defsc11}
    C(1,a)=\left(\frac{1+2a}{2}\right)^2\quad\text{if }\ \frac{-2-\sqrt{5}}{2}\leq a< -\frac{1}{2},
    \end{align}
    and
    \begin{align}\label{defsc21}
    C(1,a)
    = \left(\frac{2}{3+2a}\right)^2
    \inf\limits_{k\in\mathbb{N}}
    \left(k+\frac{1}{2}+a\right)^2\left(k-\frac{3}{2}-a\right)^2
    \end{align}
    if $a<\frac{-2-\sqrt{5}}{2}$.

    We first verify the cases \eqref{defsc2} for $N\geq 2$ and \eqref{defsc21} for $N=1$. Let $k_*\in\mathbb{N}$ such that
    \[
    \left(k_*+\frac{N}{2}+a\right)^2\left(k_*+\frac{N-4}{2}-a\right)^2
    =\inf_{k\in\mathbb{N}}
    \left(k+\frac{N}{2}+a\right)^2\left(k+\frac{N-4}{2}-a\right)^2.
    \]
    In fact, from the above analysis in Step \uppercase\expandafter{\romannumeral 3}, we know $k_*\geq 1$ for $N\geq 2$ and $k_*\geq 2$ for $N=1$.
    It is enough to show that the family of functions
    \[
    \{u^*_\epsilon(x):=f_\epsilon(|x|)
    \Phi_{k_*}(\sigma)\}_{\epsilon>0}
    \]
    is a minimizing sequence for $\left(\frac{2}{N-4-2a}\right)^2
    \left(k_*+\frac{N}{2}+a\right)^2\left(k_*+\frac{N-4}{2}-a\right)^2$,
    where $f_\epsilon$ is given as in \eqref{deftf}, and $\Psi_{k_*}(\sigma)$ denotes the $k_*$-th spherical harmonic satisfying $-\Delta_{\mathbb{S}^{N-1}}\Psi_{k_*}=c_{k_*}\Psi_{k_*}$.
    Then, from \eqref{sc1}-\eqref{sc2}, combining with \eqref{fe1}-\eqref{fehi} we deduce
    \begin{align}\label{scl}
    \frac{\int_{\mathbb{R}^{N}}|x|^{-2a}|\Delta u^*_\epsilon|^2\mathrm{d}x}
    {\int_{\mathbb{R}^N}|x|^{-2a-4}(x\cdot \nabla u^*_\epsilon)^2 \mathrm{d}x}
    & =\frac{\int^\infty_0\left(f_\epsilon''\right)^2 r^{N-1-2a}\mathrm{d}r}
    {\int^\infty_0(f_\epsilon')^2 r^{N-3-2a}\mathrm{d}r}
    +[(1+2a)(N-1)+2c_{k_*}]
    \nonumber\\& \quad +c_{k_*}[c_{k_*}+2(1+a)(N-4-2a)]\frac{\int^\infty_0f_\epsilon^2 r^{N-5-2a}\mathrm{d}r}
    {\int^\infty_0(f_\epsilon')^2 r^{N-3-2a}\mathrm{d}r}
    \nonumber\\ & \underrightarrow{\epsilon\to 0} \left(\frac{N-2-2a}{2}\right)^2
    +[(1+2a)(N-1)+2c_{k_*}]
    \nonumber\\
    &\quad +c_{k_*}[c_{k_*}+2(1+a)(N-4-2a)]\left(\frac{2}{N-4-2a}\right)^2
    \nonumber\\
    & = \left(\frac{2}{N-4-2a}\right)^2\left(\frac{(N-4-2a)(N+2a)}{4}
    +c_{k_*}\right)^2
    \nonumber\\
    & =\left(\frac{2}{N-4-2a}\right)^2
    \left(k_*+\frac{N}{2}+a\right)^2\left(k_*+\frac{N-4}{2}-a\right)^2.
    \end{align}
    Thus the claims \eqref{defsc2} for $N\geq 2$ and \eqref{defsc21} for $N=1$ hold.

    Next, we verify the cases \eqref{defsc1} for $N\geq 2$ and \eqref{defsc11} for $N=1$. We split it into two cases: $a=\frac{N-4}{2}$ and $a\neq \frac{N-4}{2}$. If $a\neq \frac{N-4}{2}$, it is enough to show that the family of functions
    \[
    \{\hat{u}_\epsilon(x):=f_\epsilon(|x|)\}_{\epsilon>0}
    \]
    is a minimizing sequence for $\left(\frac{N+2a}{2}\right)^2$, where $f_\epsilon$ is given as in \eqref{deftf}. Putting the test functions sequence $\{\hat{u}_\epsilon\}_{\epsilon>0}$ into formulas, from \eqref{fe1} we have
    \begin{align*}
    \frac{\int_{\mathbb{R}^{N}}|x|^{-2a}|\Delta \hat{u}_\epsilon|^2\mathrm{d}x}
    {\int_{\mathbb{R}^N}|x|^{-2a-4}(x\cdot \nabla \hat{u}_\epsilon)^2 \mathrm{d}x}
    & =\frac{\int^\infty_0\left(f_\epsilon''\right)^2 r^{N-1-2a}\mathrm{d}r}
    {\int^\infty_0(f_\epsilon')^2 r^{N-3-2a}\mathrm{d}r}
    +(1+2a)(N-1)
    \\
    & \to \left(\frac{N-2-2a}{2}\right)^2 +(1+2a)(N-1)=\left(\frac{N+2a}{2}\right)^2,
    \end{align*}
    as $\epsilon\to 0$.
    However, the case $a=\frac{N-4}{2}$ is not covered for the test functions sequence $\{\hat{u}_\epsilon\}_{\epsilon>0}$ because of the nontermination $0/0$, thus we consider another test functions sequence
    $$
    \left\{\bar{u}_\epsilon(x):=
    \int^\infty_{|x|}h_\epsilon(t)\mathrm{d}t\right\}_{\epsilon>0},
    $$
    where $h_\epsilon(t)$ is given as in \eqref{deftfe}, and it is enough to show that the family of functions $\left\{\bar{u}_\epsilon\right\}_{\epsilon>0}$
    is a minimizing sequence for $\left(\frac{N+2a}{2}\right)^2=(N-2)^2$. It is obvious that $\bar{u}_\epsilon(x)=\bar{u}_\epsilon(|x|)$ is radially symmetry, then from \eqref{sc1}-\eqref{sc2} and \eqref{fe1e} we obtain
    \begin{align}\label{scle}
    \frac{\int_{\mathbb{R}^{N}}|x|^{4-N}|\Delta \bar{u}_\epsilon|^2\mathrm{d}x}
    {\int_{\mathbb{R}^N}|x|^{-N}(x\cdot \nabla \bar{u}_\epsilon)^2 \mathrm{d}x}
    & =\frac{\int^\infty_0\left(h_\epsilon'\right)^2 r^{3}\mathrm{d}r}
    {\int^\infty_0h_\epsilon^2 r\mathrm{d}r}
    +(N-3)(N-1)
    \nonumber\\
    & \to (N-2)^2,\quad \mbox{as}\ \epsilon\to 0.
    \end{align}
    Thus the claims \eqref{defsc1} for $N\geq 2$ and \eqref{defsc11} for $N=1$ hold.

    Now, the proof of Theorem \ref{thmhriw} is completed.
\qed

\begin{remark}\label{remwfhri}\rm
    For each $a\neq \frac{N-4}{2}$, from \eqref{scl} we can deduce the sharp constant in \eqref{hriw} satisfying
    \begin{align*}
    C(N,a)\leq \left(\frac{2}{N-4-2a}\right)^2\inf\limits_{k\in\mathbb{N}}
    \left(k+\frac{N}{2}+a\right)^2\left(k+\frac{N-4}{2}-a\right)^2
    \leq \left(\frac{N+2a}{2}\right)^2.
    \end{align*}
    While, for the cases \eqref{defsc1} when $N\geq 2$ and \eqref{defsc11} when $N=1$,
    \begin{align*}
    C(N,a)=\left(\frac{N+2a}{2}\right)^2,
    \end{align*}
    therefore, with addition condition $a\neq \frac{N-4}{2}$ we have
    \begin{align*}
    \left(\frac{2}{N-4-2a}\right)^2\inf\limits_{k\in\mathbb{N}}
    \left(k+\frac{N}{2}+a\right)^2\left(k+\frac{N-4}{2}-a\right)^2
    =\left(\frac{N+2a}{2}\right)^2.
    \end{align*}
    To sum up, combining with the cases \eqref{defsc2} for $N\geq 2$ and \eqref{defsc21} for $N=1$, we have
    \begin{align}\label{defcnas2}
    C(N,a)=\left(\frac{2}{N-4-2a}\right)^2\inf\limits_{k\in\mathbb{N}}
    \left(k+\frac{N}{2}+a\right)^2\left(k+\frac{N-4}{2}-a\right)^2
    \end{align}
    if $a<\frac{N-4}{2}$ or $\frac{N-4}{2}<a<\frac{N-2}{2}$, and $C\left(N,\frac{N-4}{2}\right)=(N-2)^2$, thus $C(N,a)>0$ if and only if $a$ satisfies \eqref{defac} or $a=\frac{N-4}{2}$. But, it should be emphasized that for the cases \eqref{defsc2} when $N\geq 2$ and \eqref{defsc21} when $N=1$, from the previous analysis in Step \uppercase\expandafter{\romannumeral 3},
    \begin{align*}
    \left(\frac{2}{N-4-2a}\right)^2
    \inf\limits_{k\in\mathbb{N}}
    \left(k+\frac{N}{2}+a\right)^2\left(k+\frac{N-4}{2}-a\right)^2
    <\left(\frac{N+2a}{2}\right)^2.
    \end{align*}
    \end{remark}

\medskip

\noindent{\bfseries Statements and Declarations}
The authors declare that they have no conflict of interest.

\medskip

\noindent{\bfseries Acknowledgements}
The research has been supported by National Natural Science Foundation of China (No. 12371121).


\begin{thebibliography}{99}

\bibitem{Be08}
Beckner, W.: Weighted inequalities and Stein-Weiss potentials. {\em Forum Math.} {\bf 20}(4), 587--606 (2008)

\bibitem{BDMN17}
Bonforte, M., Dolbeault, J., Muratori, M., Nazaret, B.: Weighted fast diffusion equations (Part \uppercase\expandafter{\romannumeral 1}): Sharp asymptotic rates without symmetry and symmetry breaking in Caffarelli-Kohn-Nirenberg inequalities. {\em Kinet. Relat. Models} {\bf 10}(1), 33--59 (2017)

\bibitem{CKN84}
Caffarelli, L., Kohn, R., Nirenberg, L.: First order interpolation inequalities with weights. {\em Compos. Math.} {\bf 53}, 259--275 (1984)

\bibitem{CC16}
Caldiroli, P., Cora, G.: Entire solutions for a class of fourth-order semilinear elliptic equations with weights. {\em Mediterr. J. Math.} {\bf 13}(2), 657--675 (2016)

\bibitem{CM11}
Caldiroli, P., Musina, R.: On Caffarelli-Kohn-Nirenberg-type inequalities for the weighted biharmonic operator in cones. {\em Milan J. Math.} {\bf 79}(2), 657--687 (2011)

\bibitem{CM12}
Caldiroli, P., Musina, R.: Rellich inequalities with weights. {\em Calc. Var. Partial Differential Equations} {\bf 45}(1-2), 147--164 (2012)

\bibitem{CW01}
Catrina, F., Wang, Z.-Q.: On the Caffarelli-Kohn-Nirenberg inequalities: sharp constants, existence (and nonexistence), and symmetry of extremal functions. {\em Comm. Pure Appl. Math.} {\bf 54}(2), 229--258 (2001)

\bibitem{Ca20}
Cazacu, C.: A new proof of the Hardy-Rellich inequality in any dimension. {\em Proc. Roy. Soc. Edinburgh Sect. A} {\bf 150}(6), 2894--2904 (2020)

\bibitem{CL08}
Chen, W., Li, C.: The best constant in a weighted Hardy-Littlewood-Sobolev inequality. {\em Proc. Amer. Math. Soc.} {\bf 136}(3), 955--962 (2008)

\bibitem{CLO06}
Chen, W., Li, C., Ou, B.: Classification of solutions for an integral equation. {\em Comm. Pure Appl. Math.} {\bf 59}(3), 330--343 (2006)

\bibitem{CC93}
Chou, K.S., Chu, C.W.: On the best constant for a weighted Sobolev-Hardy inequality. {\em J. Lond. Math. Soc. (2)} {\bf 48}(1), 137--151 (1993)

\bibitem{DM22}
D'Ambrosio, L., Mitidieri, E.: Entire solutions of certain fourth order elliptic problems and related inequalities. {\em Adv. Nonlinear Anal.} {\bf 11}(1), 785--829 (2022)

\bibitem{DMY20}
Dan, S., Ma, X., Yang, Q.: Sharp Rellich-Sobolev inequalities and weighted Adams inequalities involving Hardy terms for bi-Laplacian. {\em Nonlinear Anal.} {\bf 200}, 112068, 18 pp (2020)

\bibitem{DH98}
Davies, E.B., Hinz, A.M.: Explicit constants for Rellich inequalities in $L^p(\Omega)$. {\em Math. Z.} {\bf 227}(3), 511--523 (1998)

\bibitem{DGT23-jde}
Deng, S., Grossi, M., Tian, X.: On some weighted fourth-order equations. {\em J. Differ. Equations} {\bf 364}, 612--634 (2023)

\bibitem{DT23-jfa}
Deng, S., Tian, X.: Some weighted fourth-order Hardy-H\'{e}non equations.  {\em J. Funct. Anal.} {\bf 284}(1), Paper No. 109745, 28 pp (2023)

\bibitem{DT24}
Deng, S., Tian, X.: Classification and non-degeneracy of positive radial solutions for a weighted fourth-order equation and its application. {\em Nonlinear Anal.} {\bf 240}, Paper No. 113468, 15 pp (2024)

\bibitem{DT23-f}
Deng, S., Tian, X.: Symmetry breaking of extremals for the high order Caffarelli-Kohn-Nirenberg type inequalities. Preprint, \url{https://arxiv.org/abs/2308.07568} (2024)

\bibitem{DT24-2}
Deng, S., Tian, X.: Symmetry breaking of extremals for the high order Caffarelli-Kohn-Nirenberg type inequalities: the singular case.  Preprint, https://arxiv.org/abs/2409.18154 (2024)

\bibitem{DEL16}
Dolbeault, J., Esteban, M.J., Loss, M.: Rigidity versus symmetry breaking via nonlinear flows on cylinders and Euclidean spaces. {\em Invent. Math.} {\bf 206}(2), 397--440 (2016)


\bibitem{DELM17}
Dolbeault, J., Esteban, M.J., Loss, M., Muratori, M.: Symmetry for extremal functions in subcritical Caffarelli-Kohn-Nirenberg inequalities. {\em C. R. Math. Acad. Sci. Paris} {\bf 355}(2), 133--154 (2017)

\bibitem{DELT09}
Dolbeault, J., Esteban, M.J., Loss, M., Tarantello, G.:
On the symmetry of extremals for the Caffarelli-Kohn-Nirenberg inequalities. {\em Adv. Nonlinear Stud.} {\bf 9}(4), 713--726 (2009)

\bibitem{EFJ90}
Edmunds, D.E., Fortunato, D., Janelli, E.: Critical exponents, critical dimensions, and the biharmonic operator. {\em Arch. Rational Mech. Anal.} {\bf 112}, 269--289 (1990)

\bibitem{FS03}
Felli, V., Schneider, M.: Perturbation results of critical elliptic equations of Caffarelli-Kohn-Nirenberg type. {\em J. Differ. Equations} {\bf 191}, 121--142 (2003)

\bibitem{GM11}
Ghoussoub, N., Moradifam, A.: Bessel pairs and optimal Hardy and Hardy-Rellich inequalities. {\em Math. Ann.} {\bf 349}(1), 1--57 (2011)

\bibitem{GG22}
Guan, X., Guo, Z.: New types of Caffarelli-Kohn-Nirenberg inequalities and applications. {\em Commun. Pure Appl. Anal.} {\bf 21}(12), 4071--4087 (2022)

\bibitem{HL28}
Hardy, G.H., Littlewood, J.E.: Some properties of fractional integrals. \uppercase\expandafter{\romannumeral 1}. {\em Math. Z.} {\bf 27}(1), 565--606 (1928)

\bibitem{Ho97}
Horiuchi, T.: Best constant in weighted Sobolev inequality with weights being powers of distance from the origin. {\em J. Inequal. Appl.} {\bf 1}(3), 275--292 (1997)

\bibitem{JL06}
Jin, C., Li, C.: Symmetry of solutions to some systems of integral equations. {\em Proc. Amer. Math. Soc.} {\bf 134}(6), 1661--1670 (2006)

\bibitem{LL17}
Lam, N., Lu, G.: Sharp constants and optimizers for a class of Caffarelli-Kohn-Nirenberg inequalities. {\em Adv. Nonlinear Stud.} {\bf 17}(3), 457--480 (2017)

\bibitem{Lieb83}
Lieb, E.: Sharp constants in the Hardy-Littlewood-Sobolev and related inequalities. {\em Ann. of Math. (2)} {\bf 118}(2), 349-374 (1983)

\bibitem{Li86}
Lin, C.-S.: Interpolation inequalities with weights. {\em Comm. Partial Differential Equations} {\bf 11}(4), 1515--1538 (1986)

\bibitem{Li98}
Lin, C.-S.: A classification of solutions of a conformally invariant fourth order equation in $\mathbb{R}^N$. {\em Comment. Math. Helv.} {\bf 73}(2), 206--231 (1998)

\bibitem{Li85-1}
Lions, P.-L.: The concentration-compactness principle in the calculus of variations. The limit case. \uppercase\expandafter{\romannumeral 1}. {\em Rev. Mat. Iberam.} {\bf 1}(1), 145--201 (1985)

\bibitem{MNSS21}
Metafune, G., Negro, L., Sobajima, M., Spina, C.: Rellich inequalities in bounded domains. {\em Math. Ann.} {\bf 379}(1-2), 765--824 (2021)

\bibitem{Mi00}
Mitidieri, E.: A simple approach to Hardy inequalities. {\em Math. Notes} {\bf 67}, 479--486 (2000)

\bibitem{MS14}
Musina, R., Sreenadh, K.: Radially symmetric solutions to the H\'{e}non-Lane-Emden system on the critical hyperbola. {\em Commun. Contemp. Math.} {\bf 16}(3), 1350030, 16 pp (2014)

\bibitem{So63}
Sobolev, S.L.: Some generalizations of imbedding theorems. {\em Amer. Math. Soc. Transl. (2)} {\bf 30}, 295--344 (1963)

\bibitem{SW58}
Stein, M., Weiss, G.: Fractional integrals on $n$-dimensional Euclidean space. {\em J. Math. Mech.} {\bf 7}, 503--514 (1958)

\bibitem{SW12}
Szulkin, A., Waliullah, S.: Sign-changing and symmetry-breaking solutions to singular problems. {\em Complex Var. Elliptic Equ.} {\bf 57}(11), 1191--1208 (2012)

\bibitem{Ta76}
Talenti, G.: Best constant in Sobolev inequality. {\em Ann. Mat. Pura Appl. (4)} {\bf 110}, 353--372 (1976)

\bibitem{TZ07}
Tertikas, A., Zographopoulos, N.B.: Best constants in the Hardy-Rellich inequalities and related improvements. {\em Adv. Math.} {\bf 209}, 407--459 (2007)


\bibitem{Va93}
Van der Vorst, R.: Best constant for the embedding of the space $H^2\cap H^1_0(\Omega)$ into $L^{2N/(N-4)}(\Omega)$. {\em Differ. Integral Equ.} {\bf 6}(2), 259--276 (1993)

\bibitem{Ya21}
Yan, Y.: Classification of positive radial solutions to a weighted biharmonic equation. {\em Commun. Pure Appl. Anal.} {\bf 20}(12), 4139--4154 (2021)

\end{thebibliography}
    \end{document}